 \makeatletter \@addtoreset{equation}{section}
\newtheorem{theorem}{Theorem}[section]
\newtheorem{corollary}[theorem]{Corollary}
\newtheorem{lemma}[theorem]{Lemma}
\newtheorem{proposition}[theorem]{Proposition}
\theoremstyle{definition}
\newtheorem{remark}[theorem]{Remark}
\newtheorem*{notation}{Notation}
\newtheorem{hyp}[theorem]{Hypotheses}
\newcommand{\C}{{\mathbb C}}
\newcommand{\R}{{\mathbb R}}
\newcommand{\N}{{\mathbb N}}
\newcommand{\bd}{\begin{defi}}
\newcommand{\ed}{\end{defi}}
\newcommand{\be}{\begin{equation}}
\newcommand{\ee}{\end{equation}}
\newcommand{\barr}{\begin{array}}
\newcommand{\earr}{\end{array}}
\newcommand{\bmn}{\begin{eqnarray}}
\newcommand{\emn}{\end{eqnarray}}
\newcommand{\bnm}{\begin{eqnarray*}}
\newcommand{\enm}{\end{eqnarray*}}
\newcommand{\bln}{\begin{subequations}}
\newcommand{\eln}{\end{subequations}}
\newcommand{\ba}{\begin{align}}
\newcommand{\ea}{\end{align}}
\newcommand{\banm}{\begin{align*}}
\newcommand{\eanm}{\end{align*}}
\title[Generation results and heat kernel estimates]{On Schr\"odinger type operators with unbounded coefficients: Generation and heat kernel estimates}
\author{Luca Lorenzi}
\address{Dipartimento di Matematica, Universit\`a degli Studi di Parma, Parco Area delle Scienze 53/A, I-43124 PARMA, Italy.}
\email{luca.lorenzi@unipr.it}
\author{Abdelaziz Rhandi}
\address{Dipartimento di Matematica, Universit\`a di Salerno, Via Ponte Don Melillo, 84084 FISCIANO (Sa), Italy.}
\email{arhandi@unisa.it}
\keywords{Schr\"odinger type operators, sectorial operators, generation results, heat kernel}
\subjclass[2000]{47D07; 35J10, 35K05, 35K10}
\begin{document}

\begin{abstract}
We consider the Schr\"odinger type operator ${\mathcal A}=(1+|x|^{\alpha})\Delta-|x|^{\beta}$, for $\alpha\in [0,2]$ and $\beta\ge 0$. We prove that, for any $p\in (1,\infty)$, the minimal realization of operator ${\mathcal A}$ in $L^p(\R^N)$ generates a strongly continuous analytic semigroup $(T_p(t))_{t\ge 0}$.

For $\alpha\in [0,2)$ and $\beta\ge 2$, we then prove some upper estimates for the heat kernel $k$ associated to the semigroup
$(T_p(t))_{t\ge 0}$.
As a consequence we obtain an estimate for large $|x|$ of the eigenfunctions of ${\mathcal A}$. Finally, we extend such estimates to a class of divergence type elliptic operators.
\end{abstract}

\maketitle

\section{Introduction}
For any $\alpha,\beta\ge 0$ with $\alpha^2+\beta^2\neq 0$, let ${\mathcal A}$ be the
elliptic operator defined by
\begin{equation}
\mathcal{A}\varphi(x)=a(x)\Delta \varphi(x)-V(x) \varphi(x),\qquad\;\, x\in \R^N,
\label{operatore-A}
\end{equation}
on smooth functions $\varphi$, where $a(x)=1+|x|^\alpha$ and $V(x)=|x|^\beta$.

In the case when $\beta=0$ and $\alpha>0$,
generation results of analytic semigroups for suitable realizations $A_p$ of the operator ${\mathcal A}$ in $L^p(\R^N)$ have been proved in \cite{fornaro-lorenzi,metafune-spina-2}.
More specifically, the results in \cite{fornaro-lorenzi} cover the case when $\alpha\in (1,2]$ and show that the realization $A_p$ in $L^p(\R^N)$, with domain
\begin{eqnarray*}
D(A_p)=\left\{u\in W^{2,p}(\R^N): a|D^2u|,\,a^{1/2}|\nabla u|\in L^p(\R^N)\right\},
\end{eqnarray*}
generates a strongly continuous analytic semigroup. For $\alpha>2$, the generation results depend upon $N$ as it is
proved in \cite{metafune-spina-2}. More specifically, if $N=1,2$ no realization of ${\mathcal A}$ in $L^p(\R^N)$ generates a strongly continuous (resp. analytic) semigroup. The same happens if $N\ge 3$ and $p\le N/(N-2)$. On the other hand, if $N\ge 3$, $p>N/(N -2)$ and $2 <\alpha\le(p-1)(N -2)$, then the maximal realization $A_p$ of the operator ${\mathcal A}$ in $L^p(\R^N)$
generates a positive semigroup of contractions, which is also analytic if $\alpha< (p - 1)(N - 2)$.

Here, we confine ourselves to the case when $\alpha\in [0,2]$.
In the first main result of the paper we prove that, for any $1<p<\infty$, the realization $A_p$ of $\mathcal{A}$ in $L^p(\R^N)$, with domain
\begin{eqnarray*}
D(A_p)=\left\{u\in W^{2,p}(\R^N): a|D^2u|,\,a^{1/2}|\nabla u|,\, V u\in L^p(\R^N)\right\},
\end{eqnarray*}
generates a positive strongly continuous and analytic semigroup $(T_p(t))_{t\ge 0}$ for any $\beta\ge 0$. This semigroup is also consistent, irreducible and ultracontractive.
We then show that, if $\beta >0$, $T_p(t)$ is compact for all $t>0$ and the spectrum $\sigma(A_p)$ is independent of $p$.

Due to the local regularity of the coefficients, the semigroup $(T_p(t))_{t\ge 0}$ admits a heat kernel $k(t,x,y)$.
If we denote by $p(t,x,y)$ the heat kernel corresponding to the operator $B=\Delta -|x|^\beta$, then it is known that, for $\beta>2$,
\begin{eqnarray*}
p(t,x,y)\le Ce^{ct^{-b}} \frac{1}{(|x| |y|)^{\frac{\beta}{4}+\frac{N-1}{2}}}e^{-\frac{|x|^{\gamma}}{\gamma}} e^{-\frac{|y|^{\gamma}}{\gamma}},\qquad\;\, 0<t\le 1,
\end{eqnarray*}
for large $|x|$ and $|y|$, $\gamma =1+\frac{\beta}{2}$, and $b>\frac{\beta +2}{\beta -2}$ (see
\cite[Cor. 4.5.5 and Cor. 4.5.8]{davies}).\\
By providing upper and lower estimates for the ground state of $A_p$ corresponding to the largest eigenvalue $\lambda_0$ and adapting the arguments used in \cite{davies}, we show
the heat kernel estimates
\begin{equation}
0<k(t,x,y)\le \frac{Ke^{\lambda_0 t}e^{ct^{-b}}f_0(x)f_0(y)}{1+|y|^\alpha},\qquad\;\, t>0,
\label{estimates-k}
\end{equation}
for $|x|,\,|y|>1$ where
\begin{eqnarray*}
f_0(x):=|x|^{\frac{\alpha-\beta}{4}-\frac{N-1}{2}}\exp\left(-\int_1^{|x|}\frac{s^{\beta/2}}{(1+s^\alpha)^{1/2}}\,ds\right),
\end{eqnarray*}
provided that $\alpha \in [0,2)$ and $\beta >2$.
Such estimates allow us to describe the behaviour of all eigenfunctions of $A_p$ at infinity.

Finally, thanks to a recent technique developed in \cite{ouhabaz-rhandi}, we extend estimates \eqref{estimates-k} to more general elliptic operators in divergence form.

We stress that, in the case where $V\equiv 0$, kernel estimates similar to \eqref{estimates-k} have been
obtained in \cite{metafune-spina-3}, even for $\alpha \ge 2$. We also quote \cite{metafune-spina-1}
where upper and lower estimates for the kernel $k$ have been proved in the case where $\alpha=0$ and $\beta<2$.
\medskip

The paper is structured as follows. In Section \ref{sect-2} we prove the generation results and exploit some of peculiar properties of the semigroup $(T_p(t))_{t\ge 0}$.
Then, in Section \ref{sect-3} we prove upper estimates for the kernel $k$ associated to operator $\mathcal{A}$, when $\alpha\in [0,2)$ and $\beta>2$, and we use them to estimate the asymptotic behaviour of the eigenvalues of the operator $A_p$. Then, we extend
the heat kernel estimates to a more general class of elliptic operators in divergence form.
Finally, in the appendix, we collect two technical results which are used in Section \ref{sect-2}.

\begin{notation}
For any $k\in\N$ (eventually $k=\infty$) we denote by $C^{k}_c(\R^N)$ the set of all
functions $f:\R^N\to\R$ that are continuously differentiable in $\R^N$ up to $k$-th order
and have compact support (say ${\rm supp}(f)$).
Moreover, for any bounded function $f:\R^N\to
\R$ we denote by $\|f\|_{\infty}$ its sup-norm, i.e., $\|f\|_{\infty}=\sup_{x\in\R^N}|f(x)|$.
If $f$ is smooth enough we set
\begin{eqnarray*}
|\nabla f(x)|^2=\sum_{i=1}^N|D_if(x)|^2,\qquad
|D^2f(x)|^2=\sum_{i,j=1}^N|D_{ij}f(x)|^2.
\end{eqnarray*}

For any $x_0\in\R^N$ and any $r>0$ we denote by $B_r(x_0)\subset\R^N$ the open ball, centered at $x_0$ with radius $r$.
We simply write $B_r$ when $x_0=0$.
$\chi_E$ denotes the characteristic function of the (measurable) set $E$,
i.e., $\chi_E(x)=1$ if $x\in E$, $\chi_E(x)=0$ otherwise.

For any $p\in [1,\infty)$ and any positive measure $d\mu$, we simply write $L^p_{\mu}$ instead of $L^p(\R^N,d\mu)$.
The Euclidean inner product in $L^2_{\mu}$ is denoted by $(\cdot,\cdot)_{\mu}$.
In the particular case when $\mu$ is the Lebesgue measure, we keep the classical notation $L^p(\R^N)$ for any $p\in [1,\infty)$.
Finally, by $x\cdot y$ we denote the Euclidean scalar product of the vectors $x,y\in\R^N$.
\end{notation}

\section{Generation results}
\label{sect-2}
For any $\alpha\in [0,2]$ and any $\beta\ge 0$ we denote by $A_p$ the realization
in $L^p(\R^N)$ ($p\in (1,\infty)$) of the operator ${\mathcal A}$, defined in \eqref{operatore-A},  with domain
\begin{align*}
D(A_p)=\left\{u\in W^{2,p}(\R^N): a|D^2u|,\,a^{1/2}|\nabla u|,\, Vu\in L^p(\R^N)\right\}.
\end{align*}
We endow $D(A_p)$ with the norm
\begin{align}
\|u\|_{D(A_p)}=\|u\|_{L^p(\R^N)}+\|Vu\|_{L^p(\R^N)}+\|a^{1/2}|\nabla u|\,\|_{L^p(\R^N)}+\|a|D^2u|\,\|_{L^p(\R^N)},
\label{norma-DAp}
\end{align}
for any $u\in D(A_p)$.

\subsection{Preliminary results and apriori estimates}
This subsection contains all the technical results that we need to prove the generation results in Theorem \ref{prop-2}.

\begin{proposition}{\cite[Prop. 6.1]{cupini-fornaro}}
\label{prop-sim}
Let $\mathcal F=\{B_{\rho(x)}(x): x \in \R^N\}$ be a covering of $\R^N$, where $\rho:\R^N\to\R_+$
is a Lipschitz continuous function, with Lipschitz constant $\kappa$ strictly less
than $1/2$. Then, there exist a countable subcovering
$\{B_{\rho(x_n)}(x_n): n\in\N\}$ and a natural number $\zeta=\zeta(N,\kappa)$
such that at most $\zeta$ among the doubled balls
$\{B_{2\rho(x_n)}(x_n): n\in\N\}$ overlap.
\end{proposition}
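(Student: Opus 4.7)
The plan is to prove the proposition by a Zorn's-lemma maximality argument: select the centers $\{x_n\}$ as a maximal subset of $\R^N$ satisfying a suitable separation inequality, then verify both that the original balls still cover $\R^N$ and that the doubled balls enjoy the finite overlap. Concretely, I let $\mathcal S$ be the collection of all subsets $S\subset\R^N$ such that for any two distinct $x,y\in S$ one has $|x-y|>\tfrac{1}{2}\max\{\rho(x),\rho(y)\}$, partially ordered by inclusion, and invoke Zorn's lemma to obtain a maximal element $S$. Since $\rho$ is positive and continuous, on any compact set it is bounded below by a positive constant, and the separation forces the balls $B_{\rho(x)/4}(x)$, $x\in S$, to be pairwise disjoint (as $\tfrac{1}{2}\max\ge\tfrac14(\rho(x)+\rho(y))$). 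A volume packing argument then shows $S$ is at most countable; enumerate $S=\{x_n:n\in\N\}$.

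For the covering property, take any $x\in\R^N\setminus S$. Maximality of $S$ means that $S\cup\{x\}$ fails the separation condition, so there exists $y\in S$ with $|x-y|\le \tfrac12\max\{\rho(x),\rho(y)\}$. If the maximum equals $\rho(y)$, then $|x-y|\le\tfrac12\rho(y)<\rho(y)$ and we are done. If instead it equals $\rho(x)$, the Lipschitz assumption $|\rho(x)-\rho(y)|\le\kappa|x-y|$ together with $|x-y|\le\tfrac12\rho(x)$ gives $\rho(y)\ge(1-\kappa/2)\rho(x)$; since $\kappa<\tfrac12$, this yields $\rho(y)>\tfrac12\rho(x)\ge|x-y|$, so again $x\in B_{\rho(y)}(y)$. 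Thus $\{B_{\rho(x_n)}(x_n):n\in\N\}$ covers $\R^N$.

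For the finite overlap of the doubled balls, suppose $y\in\bigcap_{k\in K}B_{2\rho(x_{n_k})}(x_{n_k})$. Applying $|\rho(x_{n_k})-\rho(y)|\le\kappa|x_{n_k}-y|\le 2\kappa\rho(x_{n_k})$ yields the two-sided comparability
\begin{equation*}
\frac{\rho(y)}{1+2\kappa}\le\rho(x_{n_k})\le\frac{\rho(y)}{1-2\kappa},\qquad k\in K.
\end{equation*}
The pairwise disjoint balls $B_{\rho(x_{n_k})/4}(x_{n_k})$, whose radii are bounded below by $\rho(y)/(4(1+2\kappa))$, are all contained in $B_R(y)$ with $R=2\rho(x_{n_k})+\rho(x_{n_k})/4\le 9\rho(y)/(4(1-2\kappa))$. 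Comparing Lebesgue measures gives
\begin{equation*}
|K|\le\left(\frac{9(1+2\kappa)}{1-2\kappa}\right)^{\!N},
\end{equation*}
which is the desired bound depending only on $N$ and $\kappa$.

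The main obstacle is calibrating the separation threshold. The choice $\tfrac12\max\{\rho(x),\rho(y)\}$ is not arbitrary: it is small enough that maximality, when combined with $\kappa<\tfrac12$, still forces $x\in B_{\rho(y)}(y)$ in the delicate case $\rho(x)\ge\rho(y)$, yet large enough to keep the selected centers well separated for the volume count. Any threshold $\theta$ works provided $\theta<1/(1+\kappa)$ in the covering step and $\theta$ remains strictly positive in the disjointness step, and the hypothesis $\kappa<1/2$ is precisely what allows $\theta=1/2$ to satisfy both constraints simultaneously.
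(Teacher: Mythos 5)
The paper does not prove this proposition; it simply cites \cite[Prop.\ 6.1]{cupini-fornaro}, so there is no internal proof to compare against. Your argument --- taking a maximal set $S$ with the separation property $|x-y|>\tfrac12\max\{\rho(x),\rho(y)\}$, showing the corresponding balls cover $\R^N$ via maximality and the Lipschitz bound, and controlling the overlap of the doubled balls by a volume-packing count on the disjoint quarter-balls --- is correct and self-contained. I checked each step: the Zorn argument is sound (unions of chains preserve the separation condition), the comparability $\rho(y)/(1+2\kappa)\le\rho(x_{n_k})\le\rho(y)/(1-2\kappa)$ follows from $|x_{n_k}-y|<2\rho(x_{n_k})$ and Lipschitz continuity, and the bound $\zeta=\bigl(9(1+2\kappa)/(1-2\kappa)\bigr)^N$ is obtained cleanly.

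One small inaccuracy is confined to your closing commentary and does not affect the proof: you say the hypothesis $\kappa<1/2$ is ``precisely what allows $\theta=1/2$ to satisfy both constraints.'' In fact $\theta=1/2<1/(1+\kappa)$ already holds for all $\kappa<1$, and $\theta>0$ is trivial. The place where $\kappa<1/2$ is genuinely needed is the finite-overlap step: the doubled radius $2\rho$ forces the factor $1-2\kappa$ in the comparability estimate, and this must be positive. So the restriction on $\kappa$ is dictated by the doubling factor in the conclusion, not by the choice of separation threshold.
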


\begin{remark}
\label{rem-1}
The previous proposition can be rephrased in terms of characteristic functions as follows: there exist a sequence $(x_n)$ and a natural number $\zeta$
such that
\begin{eqnarray*}
1\le \sum_{n=1}^{\infty}\chi_{B_{\rho(x_n)}(x_n)}(x)\le \sum_{n=1}^{\infty}\chi_{B_{2\rho(x_n)}(x_n)}(x)\le\zeta,\qquad\;\,x\in\R^N.
\end{eqnarray*}
\end{remark}

\begin{proposition}
\label{prop-1}
Fix $p\in (1,\infty)$, and let $q,W:\R^N\to\R$ be two functions with the following properties:
\begin{enumerate}[\rm (i)]
\item
$q\in C(\R^N)\cap C^1(\R^N\setminus\{0\})$ and there exist two positive constants $r$ and $\kappa$ such that
$|\nabla q|\le\kappa q^{1/2}$ in $\R^N\setminus B_r$. Further, $q(x)\ge q_0>0$ for any $x\in\R^N$;
\item
$W\in C(\R^N)\cap C^1(\R^N\setminus\{0\})$, $W(x)\ge w_0>0$ for any $x\in\R^N$ and there exist two constants $c_{1,p}>0$ and $c_{2,p}\in (0,4/(p-1))$ such that
\begin{equation}
|\nabla \Xi(x)|^2\le c_{1,p}\Xi(x)^2+c_{2,p}\Xi(x)^3,\qquad\;\, |x|\ge r,
\label{stima-okazawa-0}
\end{equation}
where $\Xi=q^{-1}W$. Further $0<\xi_0:=\inf_{x\in\R^N}\Xi(x)$.
\end{enumerate}
Then, there exist three positive constants $\varepsilon_0$, $C$ $($depending on $\kappa$, $c_{1,p}$, $c_{2,p}$, $\xi_0$, as well as on $\|q\|_{C^1(B_{2r}\setminus B_r)}$, $\|W\|_{C^1(B_{2r}\setminus B_r)})$ and $C_{\varepsilon}$ $($depending also on $\varepsilon$ and blowing up as $\varepsilon\to 0^+)$ such that
\begin{equation}
\|q^{\frac{1}{2}}|\nabla u|\,\|_{L^p(\R^N)}\le \varepsilon
\|q\Delta u-Wu\|_{L^p(\R^N)}+C_{\varepsilon}\|u\|_{L^p(\R^N)}
\label{stima-interp-1}
\end{equation}
and
\begin{equation}
\|q|D^2u|\,\|_{L^p(\R^N)}\le C\left (\|u\|_{L^p(\R^N)}+\|q\Delta u-Wu\|_{L^p(\R^N)}\right ),
\label{stima-2}
\end{equation}
for any $\varepsilon\in (0,\varepsilon_0]$ and any $u\in W^{2,p}(\R^N)$ with
$q^{1/2}|\nabla u|$, $q|D^2u|$, $Wu\in L^p(\R^N)$.
\end{proposition}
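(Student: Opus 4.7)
I would prove both estimates by a localization argument in the spirit of Cannarsa--Vespri and of \cite{fornaro-lorenzi}. Introduce the local scale $\rho(x):=\eta/\sqrt{\Xi(x)}$ for $|x|\ge 2r$ (smoothly modified on $B_{2r}$ so that $\rho$ stays bounded above and below by positive constants there), with $\eta>0$ a small parameter to be fixed. The point of this choice is that, on balls of radius $\rho(x)$, both $q$ and $W$ vary only by multiplicative factors close to $1$, so that after rescaling the operator $q\Delta-W$ becomes a small perturbation of the constant-coefficient operator $\Delta-\eta^2$ on the unit ball.

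The first step is to verify that $\rho$ is Lipschitz with constant strictly less than $1/2$: differentiating and using \eqref{stima-okazawa-0} together with $\Xi\ge\xi_0>0$, one obtains
\[
|\nabla\rho(x)|^2 \le \tfrac{\eta^2}{4}\bigl(c_{1,p}/\xi_0+c_{2,p}\bigr),\qquad |x|\ge 2r,
\]
which is less than $1/4$ for $\eta$ small. Proposition~\ref{prop-sim} then furnishes a sequence $(x_n)$ and a covering $\{B_n:=B_{\rho(x_n)}(x_n)\}$ whose doubled balls $\tilde B_n$ have overlap bounded by $\zeta$. Setting $R_n:=\rho(x_n)$, the hypothesis $|\nabla q|\le\kappa q^{1/2}$ (equivalently $|\nabla q^{1/2}|\le\kappa/2$) and \eqref{stima-okazawa-0} (rewritten as a Lipschitz bound for $\log\Xi$) yield $q\sim q(x_n)$ and $W\sim W(x_n)$ on each $\tilde B_n$ when $\eta$ is sufficiently small.

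Next I would rescale $\tilde u(y):=u(x_n+R_n y)$ and invoke the classical Agmon--Douglis--Nirenberg $L^p$-estimates for $\Delta-\eta^2$ on $B_2$, together with the interpolation
\[
\|\nabla \tilde u\|_{L^p(B_1)} \le \tilde\varepsilon\,\|(\Delta-\eta^2)\tilde u\|_{L^p(B_2)} + C_{\tilde\varepsilon}\,\|\tilde u\|_{L^p(B_2)}.
\]
Undoing the rescaling, multiplying by suitable powers of $q(x_n)$, and summing over $n$ using the bounded overlap, one arrives at the intermediate inequalities
\[
\|q|D^2u|\,\|_{L^p(\R^N)} \le C\bigl(\|q\Delta u-Wu\|_{L^p(\R^N)}+\|Wu\|_{L^p(\R^N)}+\|u\|_{L^p(\R^N)}\bigr)
\]
and a parallel bound for $\|q^{1/2}|\nabla u|\,\|_{L^p(\R^N)}$ carrying $\tilde\varepsilon\,\|q\Delta u-Wu\|_{L^p(\R^N)}$ and an auxiliary $\|W^{1/2}u\|_{L^p(\R^N)}$ on the right.

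The main obstacle is the disposal of the auxiliary terms $\|Wu\|_{L^p}$ and $\|W^{1/2}u\|_{L^p}$. For the former I would use the pointwise bound $|Wu|\le|q\Delta u-Wu|+\sqrt{N}\,q|D^2u|$ and absorb $\|q|D^2u|\,\|_{L^p}$ back into the left-hand side of \eqref{stima-2}: this is legitimate only if the absorption constant can be made strictly smaller than $1/\sqrt{N}$, and this is precisely where the quantitative hypothesis $c_{2,p}<4/(p-1)$ — together with the freedom to shrink $\eta$ — plays its role. Once \eqref{stima-2} is established, the Young-type inequality $\|W^{1/2}u\|_{L^p}\le\delta\|Wu\|_{L^p}+(4\delta)^{-1}\|u\|_{L^p}$, combined with the bound on $\|Wu\|_{L^p}$ just obtained, closes \eqref{stima-interp-1}.
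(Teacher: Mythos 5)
Your covering scheme and coefficient-freezing analysis are set up carefully, and the rescaling calculation is correct, but the closing absorption step cannot work, and this is where the proof breaks down.

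After undoing the rescaling on each ball $B_{R_n}(x_n)$ with $R_n=\rho(x_n)=\eta\,\Xi(x_n)^{-1/2}$, the Agmon--Douglis--Nirenberg term $\|\tilde u\|_{L^p(B_2)}$ picks up the factor $q(x_n)/R_n^2=W(x_n)/\eta^2$. Summing over the covering therefore produces
\begin{equation*}
\|q|D^2u|\,\|_{L^p(\R^N)}\le C_{ADN}\Bigl(\|q\Delta u-Wu\|_{L^p(\R^N)}
+\tfrac{C'}{\eta^2}\|Wu\|_{L^p(\R^N)}+\ldots\Bigr),
\end{equation*}
i.e.\ the coefficient in front of $\|Wu\|_{L^p}$ grows like $\eta^{-2}$. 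Shrinking $\eta$, which you need in order to make $\rho$ Lipschitz with small constant and to keep $q$ and $W$ essentially constant on each ball, makes this coefficient larger, not smaller. Your plan then plugs in the pointwise bound $|Wu|\le|q\Delta u-Wu|+\sqrt{N}\,q|D^2u|$ and tries to reabsorb $\sqrt{N}\,\|q|D^2u|\,\|_{L^p}$ on the left; this requires the coefficient of $\|Wu\|_{L^p}$ to be strictly smaller than $1/\sqrt{N}$, which is precisely what cannot be achieved. There is no regime in which the two requirements (small $\eta$ for the covering, small coefficient for the absorption) are simultaneously satisfiable, and the Agmon--Douglis--Nirenberg constant on $B_1\subset B_2$ is intrinsically of order one for, say, $v(x)=x_1x_2$.

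The deeper issue is that freezing the coefficients destroys the structural property encoded in \eqref{stima-okazawa-0}. Once you rescale, the potential becomes the constant $\eta^2$; the rescaled problem is $\Delta-\eta^2$ on a fixed ball, and the hypothesis $c_{2,p}<4/(p-1)$ never enters. Indeed, the Lipschitz bound you derive for $\rho$ holds for every $c_{2,p}>0$, so your argument would, if correct, establish the proposition without any restriction on $c_{2,p}$, which cannot be the case. What the paper does instead is to apply Okazawa's theorem \emph{globally} on $\R^N$ to the operator $\Delta-\tilde\Xi$ with the genuine function $\tilde\Xi$ (this is where $c_{2,p}<4/(p-1)$ is used, via the denominator $4-(p-1)c_{2,p}$ in \eqref{delta}), obtaining an \emph{unweighted} estimate $\|D^2u\|_{L^p(\R^N)}\lesssim\|\Delta u-\tilde\Xi u\|_{L^p(\R^N)}$ that already has no $\|Wu\|_{L^p}$ remainder. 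The covering argument is then used only to reintroduce the diffusion weight $q$, with the local radius chosen proportional to $q^{1/2}$ rather than $\Xi^{-1/2}$, and the potential is never frozen. To repair your proof you would have to keep the potential as a function in the local problem (so the Okazawa cancellation survives), which collapses your argument to essentially the paper's one.
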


\begin{proof}
In view of Proposition \ref{lemma-1} we can limit ourselves to proving \eqref{stima-interp-1} and \eqref{stima-2} when
$u\in C^{\infty}_c(\R^N)$.
Being rather long, we split the proof into some steps. Throughout the proof $p$ is arbitrarily fixed in $(1,\infty)$.

{\em Step 1.} Let us prove that
\begin{equation}
\|\,|D^2u|\,\|_{L^p(\R^N)}\le C_p\|\Delta u-\tilde\Xi u\|_{L^p(\R^N)},
\label{hessiano-1}
\end{equation}
for any $u\in C^{\infty}_c(\R^N)$ and some positive constant $C_p$. Here,
$\tilde\Xi=\varphi+(1-\varphi)\Xi$, where $\varphi$ is any smooth function such that $\chi_{B_{\tilde r}}\le\varphi\le\chi_{B_{2\tilde r}}$,
and $\tilde r=\max\{r,1\}$.
Note that $\tilde\Xi(x)\ge\tilde\xi_0:=\min\{1,\xi_0\}$ for any $x\in\R^N$. We claim that
\begin{equation}
|\nabla\tilde\Xi(x)|^2\le c_{1,p}'\tilde \Xi(x)^2+c_{2,p}'\tilde \Xi(x)^3,\qquad\;\,x\in \R^N,
\label{stima-okazawa-2}
\end{equation}
for some constants $c_{2,p}'\in (0,4/(p-1))$ and $c_{1,p}'>0$.
Clearly, estimate \eqref{stima-okazawa-2} holds true in $\R^N\setminus B_{2\tilde r}$, by virtue of \eqref{stima-okazawa-0}, and in $B_{\tilde r}$, since $\nabla\Xi$ therein identically vanishes.
For any $x\in B_{2\tilde r}\setminus B_{\tilde r}$ we can estimate
\begin{align*}
|\nabla\tilde\Xi(x)|^2=&|\nabla\varphi(x)(1-\Xi(x))+(1-\varphi(x))\nabla\Xi(x)|^2\\
\le & 2(\|\,|\nabla\varphi|\,\|_{\infty}^2\|\Xi-1\|_{L^{\infty}(B_{2\tilde r}\setminus B_{\tilde r})}^2+\|\,|\nabla\Xi|\,\|_{L^{\infty}(B_{2\tilde r}\setminus B_{\tilde r})}^2)
\tilde\xi_0^{-2}\tilde\Xi(x)^2.
\end{align*}
Hence, inequality \eqref{stima-okazawa-2} follows in the whole of $\R^N$ with $c_{2,p}'=c_{2,p}$ and
\begin{eqnarray*}
c_{1,p}'=\max\{c_{1,p}, 2(\|\,|\nabla\varphi|\,\|_{\infty}^2\|\Xi-1\|_{L^{\infty}(B_{2\tilde r}\setminus B_{\tilde r})}^2+\|\,|\nabla\Xi|\,\|_{L^{\infty}(B_{2\tilde r}\setminus B_{\tilde r})}^2)\tilde\xi_0^{-2}\}.
\end{eqnarray*}
Thanks to \eqref{stima-okazawa-2} we can apply \cite[Lemma 1.4, Thm. 2.1]{okazawa-2}, which yield
\begin{equation}
\|\Delta u\|_{L^p(\R^N)}\le \frac{4}{4-(p-1)c_{2,p}}\|\Delta u-\tilde \Xi u\|_{L^p(\R^N)}+\left (\frac{(p-1)c_{1,p}'}{4-(p-1)c_{2,p}}\right )\|u\|_{L^p(\R^N)}.
\label{delta}
\end{equation}

To complete the proof of estimate \eqref{hessiano-1} we observe that, in view of \eqref{delta} and the well-known Calderon-Zygmund inequality (see e.g., \cite[Thm. 9.19]{gilbarg}), it suffices to show that, for any $p\in (1,\infty)$, there exists a positive
constant $c_p$, independent of $u$, such that
\begin{equation}
\|u\|_{L^p(\R^N)}\le c_p\|\Delta u-\tilde\Xi u\|_{L^p(\R^N)}.
\label{invert}
\end{equation}
Set $f:=\Delta u-\tilde\Xi u$ and multiply both sides of the equation $\Delta u-\tilde\Xi u=f$ by $u|u|^{p-2}$, where we assume without loss of generality that $u$ is real. Indeed, if $u$ is complex-valued, \eqref{invert} will follow arguing on its real and imaginary parts.
Now, for any $f,g\in L^p(\R^N)$ we can estimate
\begin{align*}
\left (\|f\|_{L^p(\R^N)}+\|g\|_{L^p(\R^N)}\right )^p\le & 2^{p-1}\int_{\R^N}(|f|^p+|g|^p)dx\\
\le
& \max\{2^{p/2},2^{p-1}\}\int_{\R^N}(|f|^2+|g|^2)^{\frac{p}{2}}dx.
\end{align*}
We thus get \eqref{invert} with $c_p$ being replaced by
$c_p\max\{\sqrt{2},2^{1-1/p}\}$.

A straightforward computation, based on an integration by parts (in the case when $p\ge 2$) and on \cite[Thm. 3.1]{metafune-spina-0} (in the case when $p\in (1,2)$) shows that
\begin{eqnarray*}
-(p-1)\int_{\R^N}|u|^{p-2}|\nabla u|^2\chi_{\{u\neq 0\}}dx-\tilde\xi_0\int_{\R^N}|u|^pdx\ge\int_{\R^N}fu|u|^{p-2}dx,
\end{eqnarray*}
which yields \eqref{invert} with $c_p=\tilde\xi_0^{-1}$.

{\em Step 2.}
Let us now prove estimate \eqref{stima-interp-1} by a covering argument. The starting point is the well-known interpolation inequality (see e.g., \cite{triebel})
\begin{equation}
\|\,|\nabla v|\,\|_{L^p(\R^N)}\le c_N\|v\|_{L^p(\R^N)}^{\frac{1}{2}}\|\,|D^2v|\,\|_{L^p(\R^N)}^{\frac{1}{2}},\qquad\;\,v\in W^{2,p}(\R^N),
\label{interp}
\end{equation}
which, in view of \eqref{hessiano-1}, allows us to estimate
\begin{equation}
\|\,|\nabla u|\,\|_{L^p(\R^N)}\le c_{N,p}'\|\Delta u-\tilde\Xi u\|_{L^p(\R^N)}^{\frac{1}{2}}\|u\|_{L^p(\R^N)}^{\frac{1}{2}},
\label{1}
\end{equation}
for some positive constant $c_{N,p}'$, independent of $u\in C^{\infty}_c(\R^N)$.

We can now apply the covering argument to estimate \eqref{1}. For this purpose, let
$\tilde q:=2^{-4}\varphi+(1-\varphi)q$, where $\varphi$ is as above.
Arguing as in Step 1 we can easily show that $|\nabla\tilde q(x)|\le\tilde\kappa \tilde q(x)^{1/2}$ for any $x\in\R^N$,
where
\begin{eqnarray*}
\tilde\kappa=\max\{1,\kappa,q_0^{-1/2}(\|\,|\nabla\varphi|\,\|_{\infty}\|q-2^{-4}\|_{L^{\infty}(B_{2\tilde r}\setminus B_{\tilde r})}+\|\,|\nabla q|\,\|_{L^{\infty}(B_{2\tilde r}\setminus B_{\tilde r})})\}.
\end{eqnarray*}
Further, we introduce the function $\rho:\R^N\to\R$ defined by
\begin{eqnarray*}
\rho(x)=\frac{1}{2\tilde\kappa}\tilde q(x)^{\frac{1}{2}},\qquad\;\,x\in\R^N.
\end{eqnarray*}
Clearly, $\rho$ is a Lipschitz continuous function, with Lipschitz constant not greater than $1/4$.
Moreover,
\begin{equation}
\frac{1}{2}\tilde q(x_0)^{\frac{1}{2}}\le \tilde q(x)^{\frac{1}{2}}\le \frac{3}{2}\tilde q(x_0)^{\frac{1}{2}},\qquad\;\,x\in B_{2\rho(x_0)}(x_0)
\label{rho-1}
\end{equation}
and
\begin{eqnarray*}
\rho(x)\le \frac{1}{4}|x|+\frac{1}{8},\qquad\;\, x\in\R^N.
\end{eqnarray*}
This latter inequality implies that
\begin{equation}
B_{\rho(x_0)}(x_0)\subset B_{(1+10|x_0|)/8},\qquad\;\,
B_{2\rho(x_0)}(x_0)\subset\R^N\setminus B_{-1/4+|x_0|/2},
\label{rho-2}
\end{equation}
for any $|x_0|\ge 1/2$.

Now, for any $x_0\in\R^N$ we set $\vartheta_{x_0}(x)=\vartheta(\frac{x-x_0}{\rho(x_0)})$, where
$\vartheta \in C_c^\infty(\R^N)$ satisfies $\chi_{B_1}\le
\vartheta\le \chi_{B_2}$. Moreover, to fix the notation, we set
$L:=\|\,|\nabla\vartheta|\,\|_{\infty}+\|\Delta\vartheta\|_{\infty}$.
Applying estimate \eqref{1}
to the function $\vartheta_{x_0} u$ and using Young inequality, we get
\begin{align*}
&\|\tilde q(x_0)^{\frac{1}{2}}|\nabla u|\,\|_{L^p(B_{\rho(x_0)}(x_0))}\\
\le &\|\tilde q(x_0)^{\frac{1}{2}}|\nabla
(\vartheta_{x_0} u)|\,\|_{L^p(\R^N)}\\
\le & c_{N,p}' \| \vartheta_{x_0}u\|_{L^p(\R^N)}^{\frac{1}{2}}\|\tilde q(x_0)\Delta(\vartheta_{x_0}
u)-\tilde q(x_0)\tilde\Xi\vartheta_{x_0}u\|_{L^p(\R^N)}^{\frac{1}{2}}\\
\le& c_{N,p}' \bigg(\varepsilon\| \tilde q(x_0)\Delta(\vartheta_{x_0}
u)-\tilde q(x_0)\tilde\Xi\vartheta_{x_0}u\|_{L^p(\R^N)}+\frac{1}{4\varepsilon}\| \vartheta_{x_0} u\|_{L^p(\R^N)}\bigg)\\
\le &c_{N,p}'\bigg(\varepsilon\| \tilde q(x_0)\Delta
u-\tilde q(x_0)\tilde\Xi u\|_{L^p(B_{2\rho(x_0)}(x_0))}+\frac{2L}{{\rho(x_0)}}\varepsilon\|\tilde q(x_0)|\nabla
u|\,\|_{L^p(B_{2\rho(x_0)}(x_0))}\\
&\qquad\;\,
+\frac{L}{{\rho(x_0)}^2}\varepsilon\|\tilde q(x_0)\,u\|_{L^p(B_{2\rho(x_0)}(x_0))}+\frac{1}{4\varepsilon}\|u\|_{L^p(B_{2\rho(x_0)}(x_0))}\bigg),
\end{align*}
for any $\varepsilon>0$. Now, from \eqref{rho-1} we deduce that
\begin{align*}
&\bullet~\|\tilde q(x_0)\Delta u-\tilde q(x_0)\tilde\Xi u\|_{L^p(B_{2\rho(x_0)}(x_0))}\le 4\|\tilde q\Delta u-\tilde q \tilde \Xi u\|_{L^p(B_{2\rho(x_0)}(x_0))};\\[2mm]
&\bullet~\frac{2L}{\rho(x_0)}\|\tilde q(x_0)|\nabla u|\,\|_{L^p(B_{2\rho(x_0)}(x_0))}
=  4L\tilde\kappa\|\tilde q(x_0)^{\frac{1}{2}}|\nabla u|\,\|_{L^p(B_{2\rho(x_0)}(x_0))}\\
&\qquad\qquad\qquad\qquad\qquad\qquad\qquad\;\;\;\,\le  8L\tilde\kappa\|\tilde q^{\frac{1}{2}}|\nabla u|\,\|_{L^p(B_{2\rho(x_0)}(x_0))};\\[2mm]
&\bullet~\frac{L}{{\rho(x_0)}^2}\|\tilde q(x_0)u\|_{L^p(B_{2\rho(x_0)}(x_0))}=  4L\tilde\kappa^2\|u\|_{L^p(B_{2\rho(x_0)}(x_0))}.
\end{align*}
Hence,
\begin{align*}
&\|\tilde q^{\frac{1}{2}}|\nabla u|\,\|_{L^p(B_{\rho(x_0)}(x_0))}\notag\\
\le & \frac{3}{2}\|\tilde q(x_0)^{\frac{1}{2}}|\nabla u|\,\|_{L^p(B_{\rho(x_0)}(x_0))}\nonumber\\
\le &6c_{N,p}'\bigg\{\varepsilon\|\tilde q\,\Delta u-\tilde q\tilde\Xi u\|_{L^p(B_{2\rho(x_0)}(x_0) )}
+2\varepsilon L\tilde\kappa\|\tilde q^{\frac{1}{2}}|\nabla u|\,\|_{L^p(B_{2\rho(x_0)}(x_0))}\nonumber\\
&\qquad\quad+\bigg (\varepsilon L\tilde\kappa^2+\frac{1}{16\varepsilon}\bigg )\|u\|_{L^p(B_{2\rho(x_0)}(x_0))}\bigg\}.
\end{align*}
By Proposition \ref{prop-sim} there exist a sequence $(x_n)$ and a positive number $\zeta$ such that ${\mathcal F}'=\{B_{\rho(x_n)}(x_n): n\in\N\}$ is a covering of $\R^N$ and the intersection of more than $\zeta$ balls from ${\mathcal F}'$ is empty. From \eqref{rho-2} it is immediate to conclude that $\{B_{\rho(x_n)}(x_n): |x_n|\ge 6\tilde r\}$ is a covering of $\R^N\setminus B_{8\tilde r}$ and
$B_{2\rho(x_n)}(x_n)\subset \R^N\setminus B_{2\tilde r}$ for $|x_n|\ge 6\tilde r$ and any $n\in\N$. Taking Remark \ref{rem-1} into account and recalling that $\tilde q=q$
and $\tilde q\tilde\Xi=W$ in $\R^N\setminus B_{2\tilde r}$, we can write
\begin{align*}
\|q^{\frac{1}{2}}|\nabla u|\,\|_{L^p(\R^N\setminus B_{8\tilde r})}^p
\le & c_{N,p}''\varepsilon^p\int_{\R^N}|q\Delta u-Wu|^p\sum_{|x_n|\ge 6\tilde r}\chi_{B_{2\rho(x_n)}(x_n)}dx\notag\\
&+c_{N,p}''(\varepsilon L\tilde\kappa)^p\int_{\R^N}q^{\frac{p}{2}}|\nabla u|^p\sum_{|x_n|\ge 6\tilde r}\chi_{B_{2\rho(x_n)}(x_n)}dx\notag\\
&+c_{N,p}''\bigg (\varepsilon L\tilde\kappa^2+\frac{1}{16\varepsilon}\bigg )^p\int_{\R^N}|u|^p\sum_{|x_n|\ge 6\tilde r}\chi_{B_{2\rho(x_n)}(x_n)}dx\notag\\
\le & c_{N,p}''\zeta\varepsilon^p\|q\Delta u-Wu\|_{L^p(\R^N\setminus B_{2\tilde r})}^p\notag\\
&+c_{N,p}''\zeta(\varepsilon L\tilde\kappa)^p\|q^{\frac{1}{2}}|\nabla u|\,\|_{L^p(\R^N\setminus B_{2\tilde r})}^p\notag\\
&+c_{N,p}''\zeta\bigg (\varepsilon L\tilde\kappa^2+\frac{1}{16\varepsilon}\bigg )^p\|u\|_{L^p(\R^N\setminus B_{2\tilde r})}^p.
\end{align*}
Due to the arbitrariness of $\varepsilon>0$, from the above estimate we get
\begin{align}
\|q^{\frac{1}{2}}|\nabla u|\,\|_{L^p(\R^N\setminus B_{8\tilde r})}
\le & \varepsilon \|q\Delta u-Wu\|_{L^p(\R^N)}
+\varepsilon\|q^{\frac{1}{2}}|\nabla u|\,\|_{L^p(\R^N)}+C_{\varepsilon}\|u\|_{L^p(\R^N)},
\label{stima-outer}
\end{align}
for any $\varepsilon>0$ and some positive constant $C_{\varepsilon}$, possibly blowing up as $\varepsilon\to 0^+$.

To extend the previous inequality to the whole of $\R^N$ we
use the classical interior $L^p$-estimates (see e.g.,
\cite[Thm. 9.11]{gilbarg})
\begin{align}
\|u\|_{W^{2,p}(B_{8\tilde r})}
\le &K_1(\|q\Delta u-Wu\|_{L^p(\R^N)}+\|v\|_{L^p(\R^N)}),
\label{apriori}
\end{align}
and the interpolative estimate
\begin{eqnarray*}
\|\,|\nabla v|\,\|_{L^p(B_{8\tilde r})}\le K_2\|v\|_{L^p(B_{8\tilde r})}^{\frac{1}{2}}\|v\|_{W^{2,p}(B_{8\tilde r})}^{\frac{1}{2}},\qquad\;\,
v\in W^{2,p}(B_{8\tilde r}),
\end{eqnarray*}
which hold for some positive constants $K_1$ and $K_2$ independent of $u$ and $v$, respectively,
to infer that
\begin{align}
\|q^{\frac{1}{2}}|\nabla u|\,\|_{L^p(B_{8\tilde r})}\le &
\|q^{\frac{1}{2}}\|_{L^{\infty}(B_{8\tilde r})}\|\,|\nabla u|\,\|_{L^p(B_{8\tilde r})}\notag\\
\le &K_3\|u\|_{L^p(B_{8\tilde r})}^{\frac{1}{2}}\left (\|q\Delta u-Wu\|_{L^p(\R^N)}+\|u\|_{L^p(\R^N)}\right )^{\frac{1}{2}}\notag\\
\le &K_3\|u\|_{L^p(B_{8\tilde r})}^{\frac{1}{2}}\|q\Delta u-Wu\|_{L^p(\R^N)}^{\frac{1}{2}}+K_3\|u\|_{L^p(\R^N)}\notag\\
\le & \varepsilon \|q\Delta u-Wu\|_{L^p(\R^N)}+C_{\varepsilon}'\|u\|_{L^p(\R^N)},
\label{stima-inner}
\end{align}
for any $\varepsilon>0$ and some positive constants $K_3$ and $C_{\varepsilon}'$, this latter one possibly blowing up as $\varepsilon\to 0^+$.
From estimates \eqref{stima-outer} and \eqref{stima-inner} we deduce that
\begin{align*}
\|q^{\frac{1}{2}}|\nabla u|\,\|_{L^p(\R^N)}\le &
\|q^{\frac{1}{2}}|\nabla u|\,\|_{L^p(B_{8\tilde r})}+\|q^{\frac{1}{2}}|\nabla u|\,\|_{L^p(\R^N\setminus B_{8\tilde r})}\\
\le & 2\varepsilon \|q\Delta u-Wu\|_{L^p(\R^N)}+C_{\varepsilon}''\|u\|_{L^p(\R^N)}
+\varepsilon\|q^{\frac{1}{2}}|\nabla u|\,\|_{L^p(\R^N)},
\end{align*}
for any $\varepsilon>0$ and some positive constant $C_{\varepsilon}''$ possibly blowing up as $\varepsilon\to 0^+$. Hence,
taking $\varepsilon<1$, we immediately get \eqref{stima-interp-1}.

\emph{Step 3.} To conclude the proof, let us prove estimate \eqref{stima-2}.
From \eqref{hessiano-1} applied to the function $u\vartheta_{x_0}$ we deduce that
\begin{eqnarray*}
\|q(x_0)|D^2(\vartheta_{x_0} u)|\,\|_{L^p(\R^N)}\le C_p\|q(x_0)\Delta (\vartheta_{x_0} u)-q(x_0)\tilde\Xi \vartheta_{x_0} u\|_{L^p(\R^N)}.
\end{eqnarray*}
Therefore, taking \eqref{rho-1} into account, and arguing as in the proof of Step 2, we first get
\begin{align*}
&\|\tilde q|D^2u|\,\|_{L^p(B_{\rho(x_0)}(x_0))}\\
\le & \frac{9}{4}\|\tilde q(x_0)|D^2u|\,\|_{L^p(B_{\rho(x_0)}(x_0))}\\
\le & K_4\Big (\|\tilde q(x_0)\Delta u-\tilde q(x_0)\tilde\Xi u\|_{L^p(B_{2\rho(x_0)}(x_0))}\\
&\qquad+\|\tilde q(x_0)\nabla u\cdot\nabla \vartheta_{x_0}\|_{L^p(\R^N)}+\|\tilde q(x_0)u\Delta\vartheta_{x_0}\|_{L^p(\R^N)}\Big )\\
\le & K_5\bigg (\|\tilde q\Delta u-\tilde q\tilde\Xi u\|_{L^p(B_{2\rho(x_0)}(x_0))}
+\|\tilde q^{\frac{1}{2}}|\nabla u|\,\|_{L^p(B_{2\rho(x_0)}(x_0))}+\|u\|_{L^p(B_{2\rho(x_0)}(x_0))}\bigg )
\end{align*}
and, then, applying the same covering argument as above, we conclude that
\begin{equation}
\|q|D^2u|\,\|_{L^p(\R^N\setminus B_{8\tilde r})}\le K_6\left (\|q\Delta u-Wu\|_{L^p(\R^N)}+\|q^{\frac{1}{2}}|\nabla u|\,\|_{L^p(\R^N)}
+\|u\|_{L^p(\R^N)}\right ).
\label{stima-4}
\end{equation}
Here, $K_4$, $K_5$ and $K_6$ are positive constant, independent of $u$.
Combining \eqref{stima-interp-1}  and \eqref{stima-4}, we get
\begin{equation}
\|q|D^2u|\,\|_{L^p(\R^N\setminus B_{8\tilde r})}\le K_7\left (\|q\Delta u-Wu\|_{L^p(\R^N)}+\|u\|_{L^p(\R^N)}\right ),
\label{stima-5}
\end{equation}
for some positive constant $K_7$, independent of $u$.
Estimate \eqref{stima-2} now follows from \eqref{stima-5} and \eqref{apriori} (with $u$ replacing $v$).
\end{proof}

The following result is now a straightforward consequence of Proposition \ref{prop-1}.

\begin{corollary}
\label{cor-2.6}
The norm of $D(A_p)$ defined in \eqref{norma-DAp} is equivalent to the graph norm of $D(A_p)$, i.e., to the norm defined by $\|u\|_{D(A_p)}=\|u\|_{L^p(\R^N)}+\|A_pu\|_{L^p(\R^N)}$ for any $u\in D(A_p)$.
\end{corollary}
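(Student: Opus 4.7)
My plan is to prove the two inequalities of the norm equivalence separately. The forward direction is essentially trivial, and the reverse direction is the main content, where I will invoke Proposition \ref{prop-1} with a shifted potential in order to fulfil the positivity hypothesis.

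\textbf{Easy direction.} Using the pointwise bound $|\Delta u|\le\sqrt{N}\,|D^2u|$ I write
\begin{align*}
\|A_pu\|_{L^p(\R^N)}\le \|a\Delta u\|_{L^p(\R^N)}+\|Vu\|_{L^p(\R^N)}\le \sqrt{N}\,\|a|D^2u|\,\|_{L^p(\R^N)}+\|Vu\|_{L^p(\R^N)},
\end{align*}
which, together with the trivial bound $\|u\|_{L^p(\R^N)}\le \|u\|_{D(A_p)}$, shows that the graph norm is dominated by the norm in \eqref{norma-DAp}.

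\textbf{Reverse direction.} Here I would like to apply Proposition \ref{prop-1} with $q=a=1+|x|^{\alpha}$ and $W=V=|x|^{\beta}$, but the choice $W=V$ fails hypothesis (ii), since $V(0)=0$ and the associated $\Xi=V/a$ has zero infimum. The fix is to work with the shifted potential $W_\lambda:=V+\lambda$ for some fixed $\lambda>0$, which still produces an equivalent expression because $a\Delta u-W_\lambda u=A_pu-\lambda u$. I will check that the hypotheses of Proposition \ref{prop-1} are satisfied for $(q,W)=(a,W_\lambda)$: hypothesis (i) is immediate since $a\ge 1$, $a\in C^1(\R^N\setminus\{0\})$, and $|\nabla a(x)|=\alpha|x|^{\alpha-1}\le \alpha(1+|x|^{\alpha})^{1/2}$ for $|x|\ge 1$ in the whole range $\alpha\in[0,2]$. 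For hypothesis (ii), $W_\lambda\ge\lambda>0$, and a direct computation shows that $\Xi_\lambda:=(|x|^{\beta}+\lambda)/(1+|x|^{\alpha})$ has a strictly positive infimum on $\R^N$ and behaves, together with $\nabla\Xi_\lambda$, so that $|\nabla\Xi_\lambda(x)|/\Xi_\lambda(x)=O(|x|^{-1})$ as $|x|\to\infty$, yielding \eqref{stima-okazawa-0} with $c_{1,p}$ suitable and any prescribed $c_{2,p}\in(0,4/(p-1))$.

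\textbf{Conclusion.} Proposition \ref{prop-1} then gives, for every $u\in D(A_p)$ and every $\varepsilon\in(0,\varepsilon_0]$,
\begin{align*}
\|a^{1/2}|\nabla u|\,\|_{L^p(\R^N)}&\le \varepsilon\|A_pu-\lambda u\|_{L^p(\R^N)}+C_{\varepsilon}\|u\|_{L^p(\R^N)},\\
\|a|D^2u|\,\|_{L^p(\R^N)}&\le C\bigl(\|u\|_{L^p(\R^N)}+\|A_pu-\lambda u\|_{L^p(\R^N)}\bigr),
\end{align*}
and, absorbing the $\lambda\|u\|_{L^p}$ term into the $L^p$-norm of $u$,
\begin{align*}
\|a^{1/2}|\nabla u|\,\|_{L^p(\R^N)}+\|a|D^2u|\,\|_{L^p(\R^N)}\le C'\bigl(\|u\|_{L^p(\R^N)}+\|A_pu\|_{L^p(\R^N)}\bigr).
\end{align*}
Finally, the potential term is controlled by writing $Vu=a\Delta u-A_pu$ and estimating
\begin{align*}
\|Vu\|_{L^p(\R^N)}\le \|A_pu\|_{L^p(\R^N)}+\sqrt{N}\,\|a|D^2u|\,\|_{L^p(\R^N)}\le C''\bigl(\|u\|_{L^p(\R^N)}+\|A_pu\|_{L^p(\R^N)}\bigr).
\end{align*}
Summing these bounds yields the missing inequality $\|u\|_{D(A_p)}\le C(\|u\|_{L^p(\R^N)}+\|A_pu\|_{L^p(\R^N)})$.

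The main (and only non-routine) obstacle is recognising that Proposition \ref{prop-1} cannot be used with $W=V$ directly; once the shift $V\mapsto V+\lambda$ is introduced, the rest is just bookkeeping together with the verification that the hypotheses of Proposition \ref{prop-1} hold for $(q,W)=(a,V+\lambda)$ throughout the parameter range $\alpha\in[0,2]$, $\beta\ge 0$.
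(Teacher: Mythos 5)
Your reduction of the corollary to Proposition~\ref{prop-1} applied to the pair $(q,W)=(a,V+\lambda)$ contains a gap in the range $\alpha>\beta$. You assert that $\Xi_\lambda(x)=(|x|^\beta+\lambda)/(1+|x|^\alpha)$ has a strictly positive infimum on $\R^N$, but when $\alpha>\beta$ one has $\Xi_\lambda(x)\sim|x|^{\beta-\alpha}\to 0$ as $|x|\to\infty$, so $\xi_0=\inf_{\R^N}\Xi_\lambda=0$. For instance with $\alpha=2$, $\beta=1$, $\Xi_\lambda(x)=(|x|+\lambda)/(1+|x|^2)\to 0$. This violates hypothesis~(ii) of Proposition~\ref{prop-1}, and that hypothesis is not decorative: the proof of Proposition~\ref{prop-1} uses $\tilde\Xi\ge\tilde\xi_0>0$ in an essential way to derive \eqref{invert}, i.e. $\|u\|_{L^p}\le\tilde\xi_0^{-1}\|\Delta u-\tilde\Xi u\|_{L^p}$, which collapses when $\tilde\xi_0=0$. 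Your observation that $|\nabla\Xi_\lambda|/\Xi_\lambda=O(|x|^{-1})$ is correct and gives \eqref{stima-okazawa-0}, but it does not rescue the missing infimum condition.

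The paper avoids this by a detour through the bounded regularizations $q_\sigma=a/(1+\sigma a)$ and $V_{1,\sigma}=(V+1)/(1+\sigma(V+1))$ introduced in Step~1 of Theorem~\ref{prop-2}. There the ratio $\Xi_{1,\sigma}=V_{1,\sigma}/q_\sigma$ tends to $1$ at infinity for every $\alpha,\beta$ (because both numerator and denominator saturate at $1/\sigma$), so $\inf_{\R^N}\Xi_{1,\sigma}>0$ uniformly in $\sigma\in(0,1)$ and Proposition~\ref{prop-1} can be applied to $(q_\sigma,V_{1,\sigma})$ with $\sigma$-independent constants, yielding the estimates \eqref{stima-a} and \eqref{stima-b}. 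Letting $\sigma\to 0$ (say on $C^\infty_c(\R^N)$, and then using the density result of Proposition~\ref{lemma-1}) produces exactly the two inequalities you wrote down, and your remaining bookkeeping, including controlling $Vu$ by the difference $a\Delta u-A_pu$ and the Calder\'on--Zygmund-type bound on $a|D^2u|$, is fine. So the correct statement is that Corollary~\ref{cor-2.6} follows not from a direct application of Proposition~\ref{prop-1} to $(a,V+\lambda)$, but from the uniform-in-$\sigma$ estimates extracted from that proposition in the proof of Theorem~\ref{prop-2}. Your argument as written proves the corollary only in the regime $\beta\ge\alpha$.
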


\subsection{Proof of the sectoriality of operator $A_p$}

We can now prove that operator $A_p$ is sectorial in $L^p(\R^N)$ for any $p\in (1,\infty)$.

\begin{theorem}
\label{prop-2}
For any $p\in (1,\infty)$ the operator $A_p$ generates a strongly continuous
analytic semigroup $(T_p(t))_{t\ge 0}$ in $L^p(\R^N)$ which is also positive and consistent.
\end{theorem}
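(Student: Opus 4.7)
The plan is to combine the a priori estimates from Proposition \ref{prop-1} and Corollary \ref{cor-2.6} with a dissipativity computation for the resolvent equation and an approximation step for surjectivity. By Corollary \ref{cor-2.6} the domain norm \eqref{norma-DAp} is equivalent to the graph norm of $A_p$; in particular the quantities $\|a|D^2u|\,\|_{L^p(\R^N)}$, $\|a^{1/2}|\nabla u|\,\|_{L^p(\R^N)}$ and $\|Vu\|_{L^p(\R^N)}$ are controlled by $\|u\|_{L^p(\R^N)}+\|A_pu\|_{L^p(\R^N)}$. This already gives closedness of $A_p$ and, together with a standard cutoff/regularization argument, identifies $C_c^\infty(\R^N)$ as a core.

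For the sectorial resolvent estimate I would test the equation $\lambda u-A_pu=f$ against $\bar u|u|^{p-2}$ (regularized to $\bar u(|u|^2+\delta)^{(p-2)/2}$ when $p<2$, followed by $\delta\to 0^+$) and integrate by parts. The resulting identity contains $\lambda\|u\|_{L^p(\R^N)}^p$, $\int_{\R^N}V|u|^p\,dx$, a positive gradient-type contribution of the form $(p-1)\int_{\R^N}a|u|^{p-2}|\nabla u|^2\,dx$, the pairing of $f$ with $\bar u|u|^{p-2}$, and a cross term arising from differentiating the variable coefficient $a$. Since $|\nabla a|=\alpha|x|^{\alpha-1}\le Ca(x)^{1/2}$ thanks to $\alpha\le 2$, the cross term is absorbed into the gradient term by Cauchy--Schwarz with only an $L^p$ perturbation of $u$. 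Splitting the identity into real and imaginary parts and invoking the classical $L^p$-dissipativity inequality of Okazawa type -- which controls the imaginary part of the gradient pairing by a constant multiple of its real part -- one deduces $|\lambda-\omega|\,\|u\|_{L^p(\R^N)}\le M\|f\|_{L^p(\R^N)}$ for $\lambda-\omega\in\Sigma_\theta\subset\C$, some $\omega\in\R$ and some $\theta>0$.

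Surjectivity of $\lambda-A_p$ on that sector then follows by approximation. I would set $A_p^{(n)}u:=a_n\Delta u-V_n u$ with $a_n:=\min\{a,n\}$ and $V_n:=\min\{V,n\}$ (smoothed if needed); each $A_p^{(n)}$ has bounded coefficients and therefore generates a positive analytic semigroup on $L^p(\R^N)$ by classical theory. The constants in Proposition \ref{prop-1} depend only on $\kappa$, $c_{1,p}$, $c_{2,p}$, $\xi_0$, and I would arrange them to be uniform in $n$ by a suitable choice of the truncation; consequently the resolvent images $u_n:=(\lambda-A_p^{(n)})^{-1}f$ are uniformly bounded in the norm \eqref{norma-DAp}. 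A weak-compactness argument extracts a weak limit $u\in D(A_p)$ satisfying $(\lambda-A_p)u=f$. Positivity of $T_p(t)$ is inherited from the $T_p^{(n)}(t)$'s (positive by the maximum principle), and consistency across $p$ follows from the identities $T_p^{(n)}(t)f=T_q^{(n)}(t)f$ on $L^p\cap L^q$, which persist under the passage to the limit.

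The principal obstacle is the sectorial estimate in the second paragraph: the $\nabla a$ cross term must be absorbed into $\int_{\R^N}a|u|^{p-2}|\nabla u|^2\,dx$ sharply enough that $\theta$ does not degenerate to $0$, and the Okazawa-type control of the imaginary part must survive the absorption uniformly in the truncation parameter $n$ and the regularization parameter $\delta$. The borderline condition $|\nabla a|^2/a\in L^\infty(\R^N)$, equivalent to $\alpha\le 2$, is precisely what keeps $\theta$ bounded below by a positive constant independent of those parameters. All the remaining steps are standard once the sectorial estimate is secured.
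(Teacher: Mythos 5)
Your overall strategy — uniform a priori estimates, a multiplier/dissipativity argument for the sectorial resolvent bound, and surjectivity by approximation with bounded coefficients — matches the structure of the paper's proof (the paper uses the Yosida-type regularizations $q_\sigma=(1+|x|^\alpha)/(1+\sigma(1+|x|^\alpha))$ and $V_{1,\sigma}$ rather than truncations, but this is a cosmetic difference). There is, however, one genuine gap that is not cosmetic.

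The claim that $|\nabla a(x)|=\alpha|x|^{\alpha-1}\le C\,a(x)^{1/2}$ "thanks to $\alpha\le 2$," and the associated assertion that $|\nabla a|^2/a\in L^\infty(\R^N)$ is equivalent to $\alpha\le 2$, are false when $\alpha\in(0,1)$. Indeed
\begin{equation*}
\frac{|\nabla a(x)|^2}{a(x)}=\frac{\alpha^2|x|^{2\alpha-2}}{1+|x|^\alpha}\longrightarrow\infty\quad\text{as }|x|\to 0^+\text{ whenever }\alpha\in(0,1),
\end{equation*}
so the cross term coming from $\nabla a$ cannot be absorbed uniformly near the origin, and the sectoriality constant $\theta$ you are trying to keep bounded away from zero is not controlled by your inequality. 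The condition you need is in fact $1\le\alpha\le 2$ (together with the trivial case $\alpha=0$). The paper flags precisely this difficulty — "we cannot control $\nabla q_{1/n}$ by $q_{1/n}^{1/2}$ since the gradient of $q_{1/n}$ blows up as $x$ tends to $0$" — and resolves it by a separate sub-argument: it replaces the diffusion coefficient by a smooth modification $\hat q_{1/n}$ near the origin (for which the pointwise gradient bound does hold), obtains the resolvent estimate for the modified operator, and then removes the modification by means of the interior $L^p$ estimate of Proposition \ref{prop-stime-interne} applied on a fixed ball. Your proposal contains no substitute for this step, so as written it only proves the theorem for $\alpha\in\{0\}\cup[1,2]$.

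A secondary (and repairable) point: approximating by $a_n=\min\{a,n\}$, $V_n=\min\{V,n\}$ and then "smoothing if needed" is fine in principle, but the smoothing must be done so that the constants in Proposition \ref{prop-1} (in particular the constant $\kappa$ in the gradient bound and $c_{1,p},c_{2,p},\xi_0$ for $\Xi=q^{-1}W$) are uniform in $n$. The paper's regularizations are chosen so that these uniform bounds can be verified by explicit computation, including the lower bounds $q_\sigma,\,V_{1,\sigma}\ge 1/2$; with a naive truncation and ad hoc mollification you would need to redo that verification. This is not a conceptual obstruction, but it is where the work actually lives, and it should be made explicit rather than deferred.
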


\begin{proof}
Being rather long, we split the proof into several steps.

\emph{Step 1.} For any $\sigma\in (0,1)$ let us introduce the functions $q_{\sigma}$ and $V_{1,\sigma}$ defined by
\begin{eqnarray*}
q_{\sigma}(x)=\frac{1+|x|^{\alpha}}{1+\sigma(1+|x|^{\alpha})},\qquad\;\,
V_{1,\sigma}(x)=\frac{|x|^{\beta}+1}{1+\sigma (|x|^{\beta}+1)},\qquad\;\,
x\in \R^N.
\end{eqnarray*}
As it is immediately seen, $q_{\sigma}$ and $V_{1,\sigma}$ are bounded in $\R^N$ and satisfy
\begin{eqnarray*}
q_{\sigma}(x)\ge \frac{1}{2},\;\,V_{1,\sigma}(x)\ge \frac{1}{2},
\end{eqnarray*}
for any $x\in \R^N$ and any $\sigma \in (0,1)$.
By well-known results (see e.g., \cite[Chpt. 3]{lunardi}), for any $\sigma\in (0,1)$ and any $p\in (1,\infty)$, the realization $Q_{\sigma,p}$ of operator
$\mathcal{Q}_{\sigma}=q_{\sigma}\Delta-V_{1,\sigma}$
in $L^p(\R^N)$, with $W^{2,p}(\R^N)$ as a domain, is the generator of a strongly continuous,
analytic semigroup.
Here we are aimed at proving the estimates
\begin{align}
&\|q_{\sigma}^{\frac{1}{2}}|\nabla u|\,\|_{L^p(\R^N)}\le \varepsilon
\|Q_{\sigma,p}u\|_{L^p(\R^N)}+C_{\varepsilon}\|u\|_{L^p(\R^N)},
\label{stima-a}
\\
&\|q_{\sigma}|D^2u|\,\|_{L^p(\R^N)}\le C\left (\|u\|_{L^p(\R^N)}+\|Q_{\sigma,p}u\|_{L^p(\R^N)}\right ),
\label{stima-b}
\end{align}
for any $\varepsilon>0$, any $u\in W^{2,p}(\R^N)$ and some positive constants $C$ and $C_{\varepsilon}$, independent of $\sigma\in (0,1)$,
the latter constant possibly blowing up as $\varepsilon\to 0^+$.
For this purpose we prove that, for any $\sigma\in (0,1)$, functions $q_{\sigma}$ and $V_{1,\sigma}$ satisfy the assumptions of Theorem \ref{prop-1} with $r=1/2$ and the constants therein appearing being independent of $\sigma$. More specifically, we should establish the following facts:
\begin{enumerate}[\rm (i)]
 \item
there exists a constant $c_1>0$, independent of $\sigma\in (0,1)$, such that
\begin{equation}
|\nabla\Xi_{1,\sigma}(x)|^2\le c_1\Xi_{1,\sigma}(x)^2,\qquad\;\,x\in\R^N\setminus B_{1/2},
\label{okazawa-3}
\end{equation}
where
\begin{eqnarray*}
\Xi_{1,\sigma}(x):=\frac{V_{1,\sigma}(x)}{q_{\sigma}(x)}=
\frac{1+|x|^{\beta}}{1+|x|^{\alpha}}\cdot\frac{1+\sigma (1+|x|^{\alpha})}{1+\sigma(1+|x|^{\beta})},\qquad\;\,x\in\R^N;
\end{eqnarray*}
\item
there exists $\kappa>0$, independent of $\sigma\in (0,1)$, such that
\begin{equation}
|\nabla q_{\sigma}(x)|\le \kappa q_{\sigma}(x)^{\frac{1}{2}},\qquad\;\,x\in\R^N\setminus B_{1/2}.
\label{estim-kappa}
\end{equation}
\end{enumerate}

Let us begin by checking property (i). Note that
\begin{align*}
\nabla \Xi_{1,\sigma}(x)=&
\frac{(\beta-\alpha)|x|^{\alpha+\beta}+\beta |x|^{\beta}-\alpha |x|^{\alpha}}{|x|^2(1+|x|^{\alpha})^2}
\cdot\frac{1+\sigma(1+|x|^{\alpha})}{1+\sigma(1+|x|^{\beta})}x\\
&+\sigma\frac{1+|x|^{\beta}}{1+|x|^{\alpha}}
\cdot\frac{\alpha|x|^{\alpha}(1+\sigma(1+|x|^{\beta}))-\beta|x|^{\beta}(1+\sigma(1+|x|^{\alpha}))}{|x|^2(1+\sigma(1+|x|^{\beta}))^2}x,
\end{align*}
for any $x\in\R^N\setminus\{0\}$. Since
\begin{eqnarray*}
\min\left\{1,\frac{2+|x|^{\alpha}}{2+|x|^{\beta}}\right\}\le \frac{1+\sigma(1+|x|^{\alpha})}{1+\sigma(1+|x|^{\beta})},\qquad\;\,x\in\R^N,\;\,\sigma\in (0,1),
\end{eqnarray*}
it follows easily that $\Xi_{1,\sigma}(x)\ge 1/2$ for any $x\in B_1$ and any $\sigma\in (0,1)$.
Hence,
\begin{align*}
|\nabla\Xi_{1,\sigma}(x)|\le 4|\beta-\alpha|+6\alpha+8\beta
\le 4(2|\beta-\alpha|+3\beta+4\alpha)\Xi_{1,\sigma}(x),
\end{align*}
for any $x\in B_1\setminus B_{1/2}$ and
\begin{align*}
|\nabla\Xi_{1,\sigma}(x)|\le &
\left (|\beta-\alpha|\frac{|x|^{\beta-1}}{1+|x|^{\alpha}}+\beta\frac{|x|^{\beta-1}}{1+|x|^{\alpha}}+
\frac{\alpha}{|x|(1+|x|^{\alpha})}\right )\frac{1+\sigma(1+|x|^{\alpha})}{1+\sigma(1+|x|^{\beta})}\\
&+\alpha\frac{1+|x|^{\beta}}{1+|x|^{\alpha}}\cdot\frac{1}{|x|}\cdot\frac{\sigma|x|^{\alpha}}{1+\sigma(1+|x|^{\beta})}
+\beta\frac{1+|x|^{\beta}}{1+|x|^{\alpha}}\cdot\frac{1}{|x|}\cdot\frac{1+\sigma(1+|x|^{\alpha})}{1+\sigma(1+|x|^{\beta})}\\
\le & (|\beta-\alpha|+2\alpha+2\beta)\frac{\Xi_{1,\sigma}(x)}{|x|}\\
\le & (|\beta-\alpha|+2\alpha+2\beta)\Xi_{1,\sigma}(x),
\end{align*}
if $x\in\R^N\setminus B_1$.
Hence,
\begin{eqnarray*}
|\nabla\Xi_{1,\sigma}(x)|\le 4(2|\beta-\alpha|+3\beta+4\alpha)\Xi_{1,\sigma}(x),\qquad\;\,x\in\R^N\setminus B_{1/2}.
\end{eqnarray*}
Thus, estimate \eqref{okazawa-3} follows with
$c_1=16(2|\beta-\alpha|+3\beta+4\alpha)^2$.

Finally, a straightforward computation shows that \eqref{estim-kappa} holds true with $\kappa=\alpha 2^{1-\frac{\alpha}{2}}$.
Estimates \eqref{stima-a} and \eqref{stima-b} are thus proved.

\emph{Step 2.} Here we prove that, for any $p\in (1,\infty)$, there exist $\omega_0\in\R$ and $M_p>0$ such that
\begin{equation}
|\lambda|\|u\|_{L^p(\R^N)}\le M_p\|\lambda u-Q_{1/n,p}u\|_{L^p(\R^N)},
\label{hille-yosida}
\end{equation}
for any $u\in D(A_p)$, any $\lambda\in\C$ with ${\rm Re}\lambda\ge\omega_0$ and any $n\in\N$.

We begin by considering the case when $\alpha\in [1,2]$. We fix $p\in (1,\infty)$, $\lambda\in\C$, $u\in W^{2,p}(\R^N)$ and set $f:=\lambda u-Q_{p,1/n}u$. We multiply both sides of this
equation by $\overline{u}|u|^{p-2}$ and integrate by parts, taking \cite[Thm. 3.1]{metafune-spina-0} into account. We get
\begin{align}
\int_{\R^N}f\overline{u}|u|^{p-2}dx=&\lambda\int_{\R^N}|u|^pdx+\int_{\R^N}V_{1,1/n}|u|^pdx-\int_{\R^N}q_{1/n}|u|^{p-2}\overline{u}\Delta udx\notag\\
=&\lambda\int_{\R^N}|u|^pdx+\int_{\R^N}V_{1,1/n}|u|^pdx\notag\\
&+(p-1)\int_{\R^N}q_{1/n}|u|^{p-4}|{\rm Re}(\overline{u}\nabla u)|^2\chi_{\{u\neq 0\}}dx\notag\\
&+\int_{\R^N}q_{1/n}|u|^{p-4}|{\rm Im}(\overline{u}\nabla u)|^2\chi_{\{u\neq 0\}}dx\notag\\
&+i(p-2)\int_{\R^N}q_{1/n}|u|^{p-4}
{\rm Re}(\overline{u}\nabla u)\cdot {\rm Im}(\overline{u}\nabla u)\chi_{\{u\neq 0\}}dx\notag\\
&-\int_{\R^N}\nabla q_{1/n}\cdot \nabla u\, |u|^{p-2}\overline{u}dx.
\label{fiore}
\end{align}
Taking the real part of the first and last side of \eqref{fiore} we
get
\begin{align*}
\int_{\R^N}{\rm Re}(f\overline{u})|u|^{p-2}dx
=&{\rm Re}\lambda\int_{\R^N}|u|^pdx+\int_{\R^N}V_{1,1/n}|u|^pdx\notag\\
&+(p-1)\int_{\R^N}q_{1/n}|u|^{p-4}|{\rm Re}(\overline{u}\nabla u)|^2\chi_{\{u\neq 0\}}dx\notag\\
&+\int_{\R^N}q_{1/n}|u|^{p-4}|{\rm Im}(\overline{u}\nabla u)|^2\chi_{\{u\neq 0\}}dx\notag\\
&-\int_{\R^N}\nabla q_{1/n}\cdot {\rm Re}(\overline{u}\nabla u)u|u|^{p-4}\chi_{\{u\neq 0\}}dx.
\end{align*}
For notational convenience we set
\begin{align*}
&A:=\int_{\R^N}q_{1/n}|u|^{p-4}|{\rm Re}(\overline{u}\nabla u)|^2\chi_{\{u\neq 0\}}dx,\\[1mm]
&B:=\int_{\R^N}q_{1/n}|u|^{p-4}|{\rm Im}(\overline{u}\nabla u)|^2\chi_{\{u\neq 0\}}dx.
\end{align*}
Recalling that $|\nabla q_{1/n}|\le 2|q_{1/n}|^{1/2}$ and using H\"older and Young inequalities we can estimate
\begin{align*}
\|f\|_{L^p(\R^N)}\|u\|_{L^p(\R^N)}^{p-1}
\ge& ({\rm Re}\lambda) \|u\|_{L^p(\R^N)}^p+(p-1)A+B\\
&-2\int_{\R^N}q_{1/n}^{\frac{1}{2}}|{\rm Re}(\overline{u}\nabla u)| |u|^{p-4}\overline{u}dx\\
\ge &({\rm Re}\lambda) \|u\|_{L^p(\R^N)}^p+(p-1)A+B
-2A^{\frac{1}{2}}\|u\|_{L^p(\R^N)}^{\frac{p}{2}}\\
\ge & \left ({\rm Re}\lambda-\frac{2}{p-1}\right )\|u\|_{L^p(\R^N)}^p
+\frac{p-1}{2}A+B.
\end{align*}
We thus deduce that
\begin{align}
&\frac{1}{2}({\rm Re}\lambda)\|u\|_{L^p(\R^N)}\le \|f\|_{L^p(\R^N)},
\label{A}
\\[1mm]
&A\le \left (\frac{p-1}{2}\right )^{p-2}\|f\|_{L^p(\R^N)},
\label{B}
\\[1mm]
&B\le \left (\frac{p-1}{2}\right )^{p-1}\|f\|_{L^p(\R^N)},
\label{C}
\end{align}
for any $\lambda\in\C$ with ${\rm Re}\lambda\ge 4(p-1)^{-1}$. Now, taking the imaginary part of \eqref{fiore}, we get
\begin{align*}
\int_{\R^N}{\rm Im}(f\overline{u})|u|^{p-2}dx
=&({\rm Im}\lambda)\int_{\R^N}|u|^pdx\\
&+(p-2)\int_{\R^N}q_{1/n}|u|^{p-4}{\rm Re}(\overline{u}\nabla u)\cdot {\rm Im}(\overline{u}\nabla u)\chi_{\{u\neq 0\}}dx\notag\\
&-\int_{\R^N}\nabla q_{1/n}\cdot {\rm Im}(\overline{u}\nabla u)u|u|^{p-4}\chi_{\{u\neq 0\}}dx.
\end{align*}
Hence,
\begin{align*}
|{\rm Im}\lambda|\|u\|_{L^p(\R^N)}
\le & \|f\|_{L^p(\R^N)}\|u\|_{L^p(\R^N)}+|p-2|A^{\frac{1}{2}}B^{\frac{1}{2}}+2A^{\frac{1}{2}}
\|u\|_{L^p(\R^N)}^{\frac{p}{2}}.
\end{align*}
Using \eqref{A}-\eqref{C} we obtain
\begin{align}
|{\rm Im}\lambda|\|u\|_{L^p(\R^N)}\le C_p\|f\|_{L^p(\R^N)},\qquad\;\,\lambda\in\C,\;\,{\rm Re}\lambda\ge\frac{4}{p-1}.
\label{imaginary}
\end{align}
From \eqref{A} and \eqref{imaginary}, estimate \eqref{hille-yosida} follows at once with $\omega_0=4(p-1)^{-1}$.

In the case when $\alpha\in [0,1)$, the function $q_{\sigma}$ does not belong to $W^{1,\infty}(\R^N)$
and, consequently, we cannot control $\nabla q_{1/n}$ by $q_{1/n}^{1/2}$ since the gradient of $q_{1/n}$
blows up as $x$ tends to $0$.

To prove estimate \eqref{hille-yosida} we regularize the function $q_{1/n}$ in a neighborhood of the origin by introducing the function
$\hat q_{1/n}:=\varphi+(1-\varphi)q_{1/n}$, where $\varphi$ is a smooth function such that $\chi_{B_1}\le\varphi\le\chi_{B_2}$.
The arguments used above apply to the realization $\hat Q_{1/n,p}$ of the operator $\hat{\mathcal A}=\hat q_{1/n}\Delta-V$ in $L^p(\R^N)$, with $W^{2,p}(\R^N)$ as a domain, since $|\nabla\hat q_{1/n}|\le\hat\kappa\hat q_{1/n}^{1/2}$ in $\R^N$, for some positive constant $\hat\kappa$ independent of $n$. We thus deduce that there exist $\hat\omega_0>0$ and $\hat M_p>0$ such that
\begin{eqnarray*}
|\lambda|\|u\|_{L^p(\R^N)}\le\hat M_p\|\lambda u-\hat Q_{1/n,p}u\|_{L^p(\R^N)},
\end{eqnarray*}
for any $u\in W^{2,p}(\R^N)$, any $\lambda\in\C$ with ${\rm Re}\lambda\ge\hat\omega_0$ and any $n\in\N$. Since $\hat q_{1/n}=q_{1/n}$ in $\R^N\setminus B_2$, we can estimate
\begin{align}
|\lambda|\|u\|_{L^p(\R^N)}\le &\|\lambda u-\hat Q_{1/n,p}u\|_{L^p(B_2)}+\|\lambda u-Q_{1/n,p}u\|_{L^p(\R^N\setminus B_2)}\notag\\
\le &\|(\hat q_{1/n}-q_{1/n})\Delta u\|_{L^p(B_2)}+\|\lambda u-Q_{1/n,p}u\|_{L^p(B_2)}\notag\\
& +\|\lambda u-Q_{1/n,p}u\|_{L^p(\R^N\setminus B_2)}\notag\\
\le &\|\hat q_{1/n}-q_{1/n}\|_{\infty}\|\Delta u\|_{L^p(B_2)}+2\|\lambda u-Q_{1/n,p}u\|_{L^p(\R^N)}\notag\\
\le &4\|\Delta u\|_{L^p(B_2)}+2\|\lambda u-Q_{1/n,p}u\|_{L^p(\R^N)}.
\label{mart-0}
\end{align}
We now apply estimate \eqref{stima-interna-hessiano} with $r=2$ to the operator $L_n=\tilde q_{1/n}\Delta-\tilde V$, where
$\tilde q_{1/n}=q_{1/n}\psi+1-\psi$, $\tilde V=\psi V$, and $\psi$ is any smooth function such that $\chi_{B_4}\le\psi\le\chi_{B_8}$.
Note that the sup-norm and the modulus of continuity of the function $\tilde q_{1/n}$ can be estimated independently of $n$.
So, we can determine two positive constants $\tilde\omega_{0,p}$ and $K_p$, independent of $n$ and $u$, such that
\begin{align*}
\|\Delta u\|_{L^p(B_2)}\le &K_p\left (\|\lambda u-Q_{1/n,p}u\|_{L^p(B_4)}
+\|u\|_{L^p(B_4)}\right )\\
\le &K_p\left (\|\lambda u-Q_{1/n,p}u\|_{L^p(\R^N)}+\|u\|_{L^p(\R^N)}\right ),
\end{align*}
for any $\lambda\in\C$ with ${\rm Re}\lambda\ge\tilde\omega_0$,
which replaced in \eqref{mart-0} yields \eqref{hille-yosida}.

\emph{Step 3.} Here, we fix $p\in (1,\infty)$ and prove that the equation
$\lambda u-A_pu=f$ admits a unique solution $u\in D(A_p)$ for any $f\in L^p(\R^N)$ and any $\lambda\in\C$ with real part not less than $\omega_0+1$.
As a first step, we observe that $\rho(Q_{1/n,p})\supset \Sigma:=\{\lambda\in\C: {\rm Re}\lambda>\omega_0\}$ for any $n\in\N$.
Indeed, as we have already remarked, $Q_{1/n,p}$ is a sectorial operator; hence,  its resolvent set contains a right-halfline.
Such a right-halfline contains $\Sigma$.
Indeed, it is well-known that the function $\lambda\mapsto\|R(\lambda,Q_{1/n,p})\|_{L(L^p(\R^N))}$ blows up as $\lambda$ tends to the boundary
of $\rho(Q_{1/n,p})$, and \eqref{hille-yosida} shows that this cannot be case at any point of $\Sigma$.

Now, for any $n\in\N$ and $\lambda\in\Sigma$, we denote by $u_n$ the unique solution to the equation $\lambda u_n-Q_{1/n,p}u_n=f$ in
$W^{2,p}(\R^N)$. By \eqref{hille-yosida} the sequence
$(u_n)$ is bounded in $L^p(\R^N)$. Since $Q_{1/n,p}u_n=\lambda u_n-f$ also the sequence
$(Q_{1/n,p}u_n)$ is bounded in $L^p(\R^N)$. Hence, by
\eqref{stima-a} and \eqref{stima-b} we can infer that
\begin{equation}
\sup_{n\in\N}\|q_{1/n}^{\frac{1}{2}}|\nabla u_n|\,\|_{L^p(\R^N)}+
\sup_{n\in\N}\|q_{1/n}|D^2u_n|\,\|_{L^p(\R^N)}\le C\|f\|_{L^p(\R^N)},
\label{stima-ve}
\end{equation}
for some positive constant $C$, independent of $f$. Recalling that $q_{1/n}$ is bounded from below by $1/2$, we easily deduce that the
sequence $(u_n)$ is bounded in $W^{2,p}(\R^N)$.
A classical compactness argument shows that, up to a subsequence, $u_n$ converges to some function
$u\in W^{2,p}(\R^N)$, weakly in $W^{2,p}(B_R)$ and strongly in $W^{1,p}(B_R)$, for any $R>0$.
Again, up to a subsequence, we can assume that $u_n$ and $\nabla u_n$ converge, respectively, to $u$ and $\nabla u$,  pointwise in $\R^N$.
Since $\Delta u_n=q_{1/n}^{-1}(\lambda u_n-f+V_{1/n}u_n)$ and $q_{1/n}$ and $V_{1,1/n}$ converge, respectively, to $a$ and $V+1$ locally uniformly in $\R^N$, $\Delta u_n$ converges
in $L^p_{\rm loc}(\R^N)$ to the function $a^{-1}((\lambda+1)u-f+Vu)$. But we already know that, for any $R>0$, $\Delta u_n$ converges weakly in $L^p(B_R)$ to $\Delta u$. Hence, we conclude that
the function $u$ solves the equation $(\lambda+1)u-{\mathcal A}u=f$. Finally, from \eqref{stima-ve} we get
\begin{eqnarray*}
\|a^{\frac{1}{2}}|\nabla u|\,\|_{L^p(\R^N)}+
\|a|D^2u|\,\|_{L^p(\R^N)}\le C\|f\|_{L^p(\R^N)}.
\end{eqnarray*}
By difference,
$Vu=f+a\Delta u-\lambda u$ belongs to $L^p(\R^N)$. Hence, $u\in D(A_p)$.
We have so proved that the equation $\lambda u-A_pu=f$ admits, for any $\lambda\in\C$ with ${\rm Re}\lambda>\omega_0+1$, a solution
$u\in D(A_p)$.
Function $u$ is the unique solution to the equation $\lambda u-A_pu=f$ in $D(A_p)$. Indeed, if $u\in D(A_p)$ solves the equation $\lambda u-A_pu=0$,
then $u\in W^{2,p}(\R^N)$ and $\lambda a^{-1}u-\Delta u+a^{-1}Vu=0$. Multiplying both sides of this equality by $\overline{u}|u|^{p-2}$ and integrating by parts gives
\begin{align*}
0=&\lambda\int_{\R^N}\frac{|u|^p}{a}dx+(p-1)\int_{\R^N}|{\rm Re}(\overline{u}\nabla u)|^2|u|^{p-4}\chi_{\{u\neq 0\}}dx\notag\\
&+\int_{\R^N}|{\rm Im}(\overline{u}\nabla u)|^2|u|^{p-4}\chi_{\{u\neq 0\}}dx\notag\\
&+i(p-2)\int_{\R^N}{\rm Re}(\overline{u}\nabla u)\cdot {\rm Im}(\overline{u}\nabla u)|u|^{p-4}\chi_{\{u\neq 0\}}dx
+\int_{\R^N}a^{-1}V|u|^pdx.
\end{align*}
Taking the real part and recalling that ${\rm Re}\lambda>0$, we conclude that $u\equiv 0$.
Hence $\{\lambda\in\C: {\rm Re}\lambda\ge\omega_0+1\}\subset\rho(A_p)$.

Finally, letting $n$ tend to $\infty$ in \eqref{hille-yosida} gives
\begin{eqnarray*}
|\lambda|\|u\|_{L^p(\R^N)}\le M_p\|\lambda u-A_pu\|_{L^p(\R^N)},
\label{hille-yosida-bis}
\end{eqnarray*}
for any $\alpha\in [0,2]$, any $u\in D(A_p)$ and any $\lambda\in\C$ with ${\rm Re}\lambda\ge\omega_0+1$. By \cite[Prop. 2.1.11]{lunardi}, we conclude
that $A_p$ is a sectorial operator and, therefore, it generates an analytic semigroup $(T_p(t))_{t\ge 0}$ in $L^p(\R^N)$.
Such a semigroup is strongly continuous. Indeed, $D(A_p)$ is dense in $L^p(\R^N)$ since it contains $C^{\infty}_c(\R^N)$.

\emph{Step 4.}
To complete the proof we check that the semigroups $(T_p(t))_{t\ge 0}$ preserve positivity and are all consistent.
In view of the exponential formula
\begin{eqnarray*}
T_p(t)f=\lim_{n\to\infty}\left [\frac{n}{t}R\left (\frac{n}{t},A_p\right )\right ]^nf,
\end{eqnarray*}
which holds for any $t>0$, any $f\in L^p(\R^N)$, where the limit is meant in the norm topology of $L^p(\R^N)$
(see \cite[Chpt. 1, Thm. 8.3]{pazy}), it suffices to prove that the resolvent families
${\mathcal R}_p:=\{R(\lambda,A_p): \lambda>0\}$ ($p\in (1,\infty)$) are consistent and preserve positivity.  The positivity of the resolvent family
${\mathcal R}_p$ for any $p\in (1,\infty)$ follows immediately if we recall that, for any $f\in L^p(\R^N)$, $R(\lambda, A_p)f$
is the limit in $L^p_{\rm loc}(\R^N)$ of the sequence of functions $(R(\lambda,Q_{1/n,p})f)$ and,
by classical results, each operator $R(\lambda,Q_{p,1/n})$
preserves positivity.

Similarly, since, for any $n\in\N$, the resolvent families $\{R(\lambda,Q_{1/n,p}): \lambda>0\}$ ($p\in (1,\infty)$)
are consistent, for any $p,q\in (1,\infty)$, any $n\in\N$ and any $f\in L^p(\R^N)\cap L^q(\R^N)$, the function $R(\lambda,Q_{1/n,p})f$ belongs to $L^p(\R^N)\cap L^q(\R^N)$ and $R(\lambda,A_{1/n,p})f=R(\lambda,A_{1/n,q})f$. Letting $n$ tend to $\infty$ we conclude that $R(\lambda,A_p)f=R(\lambda,A_q)f$.
This concludes the proof.
\end{proof}

\subsection{Some additional properties of the semigroup $(T_p(t))_{t\ge 0}$ and the spectrum of operator $A_p$}

To begin with, we state some remarkable properties of the semigroup $(T_p(t))_{t\ge 0}$.

\begin{proposition}
\label{prop-Cbgamma}
For any $p\in (1,\infty)$, any $f\in L^p(\R^N)$, any $\gamma\in (0,1)$ and any $t>0$, the function $T_p(t)f$ belongs to $C_b^{1+\gamma}(\R^N)$.
In particular, the semigroup $(T_p(t))_{t\ge 0}$ is ultracontractive.
\end{proposition}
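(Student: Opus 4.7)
The plan is to bootstrap using analyticity of $(T_p(t))_{t\ge 0}$, the inclusion $D(A_p)\subset W^{2,p}(\R^N)$ built into the definition of $D(A_p)$, the Sobolev--Morrey embeddings, and the consistency of the family $(T_p(t))_{p\in (1,\infty)}$ established in Theorem~\ref{prop-2}.

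First I would exploit analyticity: for every $f\in L^p(\R^N)$ and $t>0$, $T_p(t)f\in D(A_p)$ with $\|A_pT_p(t)f\|_{L^p(\R^N)}\le Ct^{-1}\|f\|_{L^p(\R^N)}$ on bounded intervals $(0,T]$. Corollary~\ref{cor-2.6} then upgrades this to
\begin{equation*}
\|T_p(t)f\|_{W^{2,p}(\R^N)}\le K_p(t)\|f\|_{L^p(\R^N)},
\end{equation*}
with $K_p(t)$ finite for every $t>0$.

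Next I would iterate with Sobolev. Define $p_0=p$ and, while $2p_k<N$, set $p_{k+1}^{-1}=p_k^{-1}-2/N$, so that $W^{2,p_k}(\R^N)\hookrightarrow L^{p_{k+1}}(\R^N)$. After finitely many steps one reaches $p_{k_0}$ with $2p_{k_0}\ge N$, at which point $W^{2,p_{k_0}}(\R^N)\hookrightarrow L^q(\R^N)$ for every $q\in [p_{k_0},\infty)$. Writing
\begin{equation*}
T_p(t)f=T_{p_n}(s_n)\cdots T_{p_1}(s_1)T_{p_0}(s_0)f,\qquad s_0+\cdots+s_n=t,
\end{equation*}
and invoking consistency at each step (legitimate because each intermediate function lies in $L^{p_k}(\R^N)\cap L^{p_{k+1}}(\R^N)$), one promotes $T_p(t)f$ to $L^q(\R^N)$ for arbitrary $q\in [p,\infty)$. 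One last application of $T_q(s_n)$ then yields $T_p(t)f\in W^{2,q}(\R^N)\hookrightarrow C_b^{1+\gamma}(\R^N)$ by Morrey's embedding, provided $q>N/(1-\gamma)$. Tracking the constants along the chain gives $\|T_p(t)f\|_{C_b^{1+\gamma}(\R^N)}\le\Theta(t)\|f\|_{L^p(\R^N)}$, which in particular is the ultracontractivity assertion.

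The only delicate point, more bookkeeping than substance, is verifying that consistency may legitimately be invoked at each stage---this is automatic because $D(A_{p_k})\subset W^{2,p_k}(\R^N)\hookrightarrow L^{p_{k+1}}(\R^N)$, so the output of each step lies simultaneously in the two spaces being bridged---and that the constants collected from analyticity and from Sobolev embedding compose to something finite for every fixed $t>0$, which is automatic as soon as the splitting $t=s_0+\cdots+s_n$ keeps each $s_j$ bounded away from zero.
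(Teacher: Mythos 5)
Your argument follows the same route as the paper's: analyticity gives $T_p(\cdot)f\in D(A_p)\subset W^{2,p}(\R^N)$, a finite Sobolev bootstrap via the consistency of the $(T_q(t))_{t\ge0}$ raises the integrability exponent until it exceeds $N$, and Morrey's embedding then yields $C_b^{1+\gamma}(\R^N)$. The only cosmetic difference is that you track the operator norms along the chain to make the ultracontractivity bound explicit, whereas the paper leaves that as an immediate consequence; the substance is identical.
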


\begin{proof}
Fix $f\in L^p(\R^N)$, $\gamma\in (0,1)$ and $t>0$.
Since $(T_p(t))_{t\ge 0}$ is an analytic semigroup in $L^p(\R^N)$, $T_p(t/2)f\in D(A_p)\subset W^{2,p}(\R^N)$. If $p\ge N/2$,
then $T_p(t/2)f\in L^q(\R^N)$ for any $q\in [p,\infty)$. Since the semigroups $(T_r(t))_{t\ge 0}$ $(r\in (1,\infty))$ are consistent, $T_p(t)f=T_q(t/2)T_p(t/2)f\in D(A_q)$.
Choosing $q$ such that $W^{2,q}(\R^N)$ embeds in $C_b^{1+\gamma}(\R^N)$, we conclude that $T_p(t)f\in C_b^{1+\gamma}(\R^N)$.

Let us now suppose that $p< N/2$. Consider the sequence $(r_n)$, defined by $r_n=1/p-2n/N$ for any $n\in\N$, and set $q_n=1/r_n$ for any $n\in\N$. Let $n_0$ be the smallest integer such that $r_{n_0}\le 2/N$; note that $r_{n_0}>0$. Then, $T_p(t/(n_0+2))f\in D(A_p)\subset L^{q_1}(\R^N)\cap L^p(\R^N)$,
by the Sobolev embedding theorem. Hence, $T_p(2t/(n_0+2))f=T_{q_1}(t/(n_0+2))T_p(t/(n_0+2))f\in
D(A_{q_1})\subset L^{q_2}(\R^N)$. Iterating this argument, we obtain that $T_p((n_0+1)t/(n_0+2))f\in D(A_{q_{n_0}})$, and we can conclude that $T_p(t)f\in C_b^{1+\gamma}(\R^N)$ arguing as in the previous case.
The last statement of the proposition is now immediate.
\end{proof}

It is well-known that one can associate a semigroup $(T(t))_{t\ge 0}$ of bounded operators in $C_b(\R^N)$ with operator ${\mathcal A}$.
Since the function $\varphi:\R^N\to\R$, defined by $\varphi(x)=1+|x|^2$ for any $x\in\R^N$, is a Lyapunov function for
the operator ${\mathcal A}$ (i.e., ${\mathcal A}\varphi\le c\varphi$), for any $f\in C_b(\R^N)$, $T(t)f$ is the value at $t>0$ of the unique
solution $u\in C([0,\infty)\times\R^N)\cap C^{1,2}((0,\infty)\times\R^N)$ of the Cauchy problem
\begin{equation}
\left\{
\begin{array}{lll}
D_tu(t,x)={\mathcal A}u(t,x), & t>0, &x\in\R^N,\\[1mm]
u(0,x)=f(x), &&x\in\R^N,
\end{array}
\right.
\label{pb-omogeneo}
\end{equation}
which is bounded in each strip $[0,T]\times\R^N$. Actually, since the potential term in operator ${\mathcal A}$ is nonpositive in $\R^N$, $T(\cdot)f$ is bounded in $[0,\infty)\times\R^N$.
Finally, we stress that $(T(t))_{t\ge 0}$ is strong Feller and irreducible, i.e., each operator $T(t)$ maps $B_b(\R^N)$ (the space of all the bounded and Borel measurable functions $f:\R^N\to\R$) into $C_b(\R^N)$ and
$T(t)\chi_{E}>0$ in $\R^N$ for any measurable set $E$ with positive Lebesgue measure.
We refer the reader to \cite{libro,MPW} for the proofs of the claimed results and for further details.

\begin{proposition}
\label{prop-cons-Linf}
The semigroups $(T_p(t))_{t\ge 0}$ and $(T(t))_{t\ge 0}$ agree on $B_b(\R^N)\cap L^p(\R^N)$ for any $p\in (1,\infty)$.
Moreover, for any $f\in L^p(\R^N)$ and any $t>0$, the function $T_p(t)f$ belongs to $C^{2+\gamma}_{\rm loc}(\R^N)$, where $\gamma=\min\{\alpha,\beta\}$, if $\alpha,\beta >0$, and $\gamma=\max\{\alpha,\beta\}$, if $\alpha\beta=0$.
\end{proposition}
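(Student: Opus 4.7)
The plan is to prove the two assertions separately. For the consistency, my strategy is to compare resolvents. Fix $f \in C_c^\infty(\R^N)$ and $\lambda$ with ${\rm Re}\,\lambda > \omega_0 + 1$, and set $u := R(\lambda, A_p)f$. Then $u \in D(A_p)$ solves $\lambda u - \mathcal{A} u = f$ in the $L^p$ sense and, by the local regularity assertion (proved below), pointwise as a classical solution. The representation $u = \int_0^\infty e^{-\lambda t} T_p(t)f\,dt$ combined with Proposition \ref{prop-Cbgamma} shows that $u$ is bounded. On the other hand, $v := \int_0^\infty e^{-\lambda t} T(t)f\,dt$ is also a bounded classical solution of $\lambda v - \mathcal{A} v = f$, by the construction of $(T(t))_{t \ge 0}$ recalled before the statement. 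Uniqueness of bounded classical solutions, which follows from the Lyapunov function $\varphi(x) = 1 + |x|^2$ via a standard maximum-principle argument applied to $(u-v) \pm \varepsilon\varphi$ for $\lambda$ sufficiently large, yields $u = v$. The exponential formula \cite[Chpt. 1, Thm. 8.3]{pazy} then gives $T_p(t)f = T(t)f$ for $f \in C_c^\infty$. I would extend the identity to $f \in B_b(\R^N) \cap L^p(\R^N)$ by approximation: if $(f_n) \subset C_c^\infty$ converges to $f$ in $L^p$ and pointwise, with $\|f_n\|_\infty$ uniformly bounded, then $T_p(t)f_n \to T_p(t)f$ in $L^p$ by continuity, while $T(t)f_n \to T(t)f$ pointwise by the strong Feller property combined with dominated convergence.

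For the local regularity, I would use interior Schauder estimates. Fix $f \in L^p(\R^N)$ and $t > 0$, and set $u := T_p(t)f$. Proposition \ref{prop-Cbgamma} gives $u \in C_b^{1+\mu}(\R^N)$ for every $\mu \in (0,1)$, and the semigroup identity $A_p u = T_p(t/2)(A_p T_p(t/2)f)$ combined with the same proposition shows that $A_p u \in C_b^{1+\mu}(\R^N)$ as well. Rewriting the equation $\mathcal{A} u = A_p u$ as $\Delta u = a^{-1}(A_p u + V u)$ and observing that $a \ge 1$ is bounded below on compact sets while $a$ and $V$ belong to $C^\gamma_{\rm loc}(\R^N)$ with the announced exponent, the right-hand side is in $C^\gamma_{\rm loc}(\R^N)$. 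Interior Schauder estimates for the Laplacian then yield $u \in C^{2+\gamma}_{\rm loc}(\R^N)$.

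The main obstacle is the uniqueness step in the consistency argument, since this is what ties the $L^p$-theory of Theorem \ref{prop-2} to the Cauchy-problem construction of $(T(t))_{t \ge 0}$ recalled before the statement. The Lyapunov-function argument is standard but requires care because of the unbounded coefficients of $\mathcal{A}$: one applies the maximum principle to $(u-v)\pm\varepsilon\varphi$ on expanding balls, using $\mathcal{A}\varphi \le c\varphi$ and choosing $\lambda > c$ to obtain the correct sign at the boundary. The regularity bootstrap is then routine, modulo the careful identification of the exponent $\gamma$ dictated by the behaviour of $a$ and $V$ at the origin.
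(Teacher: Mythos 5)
Your proof is correct in outline and takes a genuinely different route from the paper's. For the consistency assertion, the paper works directly at the level of the parabolic Cauchy problem: it shows that for $f\in C^2_c(\R^N)$ the function $v=T_p(\cdot)f$ is a bounded \emph{classical} solution of $D_tv={\mathcal A}v$, $v(0)=f$ (using Proposition \ref{prop-Cbgamma}, Sobolev embedding, and local elliptic regularity to upgrade the $W^{2,p}$-valued curve to a $C^{1,2}$ function of $(t,x)$), and then invokes the uniqueness theorem for bounded classical solutions of the Cauchy problem from the $C_b$-theory (which rests on the Lyapunov function $\varphi=1+|x|^2$). You instead compare resolvents, using the maximum principle for the \emph{elliptic} equation $\lambda w-{\mathcal A}w=0$ via the same Lyapunov function, and then pass back to the semigroups with the exponential formula. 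Both routes hinge on exactly the same ingredients (Lyapunov function plus maximum principle, local elliptic regularity, Sobolev iteration for boundedness), and the elliptic uniqueness argument is if anything slightly more elementary than the parabolic one. For the regularity, the paper simply observes that $T_p(t)f=T(t/2)T_p(t/2)f$ with $T_p(t/2)f\in C_b(\R^N)$ and cites the known fact that $T(t)$ maps $C_b(\R^N)$ into $C^{2+\gamma}_{\rm loc}(\R^N)$; your self-contained Schauder bootstrap via $\Delta u = a^{-1}(A_pu+Vu)$ and the analyticity identity $A_pu=T_p(t/2)(A_pT_p(t/2)f)$ is a nice alternative that avoids an external citation.

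Two loose ends you should tidy. First, the boundedness of $u=R(\lambda,A_p)f$ does not follow directly from Proposition \ref{prop-Cbgamma} applied to $T_p(t)f$ and the Laplace integral, because that proposition gives no uniform control on $\|T_p(t)f\|_\infty$ as $t\to0^+$; you either need the parallel Sobolev-iteration argument applied to the resolvent (using consistency of the resolvent families already established in Theorem \ref{prop-2}), or a maximum-principle $L^\infty$-bound on $R(\lambda,A_p)f$ for $f\in C^\infty_c$. Second, the one-step Schauder bootstrap only produces $u\in C^{2+\gamma}_{\rm loc}$ directly when $\gamma<2$: the step $A_pu\in C^{1+\mu}_b$ for all $\mu\in(0,1)$ gives $A_pu\in C^\gamma_{\rm loc}$ only for $\gamma<2$, so when $\gamma\ge2$ (which is possible, e.g.\ $\alpha=0$ and $\beta\ge 2$, where $\gamma=\beta$) one must iterate — apply the same Schauder argument to $A_pu$, $A_p^2u$, etc., using analyticity to keep $A_p^ku\in C^{1+\mu}_b$ — before the stated conclusion is reached. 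You flag this as ``routine'' at the end, but it should at least be said that an iteration is needed, not a single application. You should also note that the representation $v=\int_0^\infty e^{-\lambda t}T(t)f\,dt$ as a bounded classical solution of $\lambda v-\mathcal{A}v=f$ requires a citation to the $C_b$-theory (e.g.\ \cite{libro,MPW}), since $(T(t))_{t\ge 0}$ is not strongly continuous on $C_b(\R^N)$.
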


\begin{proof}
Fix $f\in C_c^2(\R^N)\subset D(A_q)$ for any $q\in (1,\infty)$. Since $(T_p(t))_{t\ge 0}$ is a strongly continuous analytic semigroup in $L^p(\R^N)$, for any $p\in (1,\infty)$, and $D(A_p)$ continuously embeds in $W^{2,p}(\R^N)$, the function $v=T_p(\cdot)f$ is in $C^{\infty}((0,\infty);W^{2,p}(\R^N))\cap C([0,\infty);W^{2,p}(\R^N))$ and solves the Cauchy problem
\eqref{pb-omogeneo}. Taking Proposition \ref{prop-Cbgamma} and the Sobolev embedding theorem into account, we can infer that $v$ belongs to $C^{\infty}((0,\infty);C^{1+\theta}_b(\R^N))\cap
C([0,\infty)\times\R^N)$ for any $\theta\in (0,1)$. In particular, $v\in C^{1+\theta}_{\rm loc}((0,\infty)\times\R^N)$ for any $\theta$ as above. By difference, $a\Delta v-Vv\in C^{\theta}_{\rm loc}((0,\infty)\times\R^N)$. As a byproduct, we deduce that $\Delta v\in C^{\theta}_{\rm loc}((0,\infty)\times\R^N)$ and, by elliptic regularity, $D_{ij}v\in C((0,\infty)\times\R^N)$ for any $i,j=1,\ldots,N$. Hence, $v\in C([0,\infty)\times\R^N)\cap C^{1,2}((0,\infty)\times\R^N)$ is a classical solution to problem \eqref{pb-omogeneo} and is bounded in each strip $[0,T]\times\R^N$. By uniqueness, $T_p(\cdot)f=T(\cdot)f$.

Let us now assume that $f\in L^p(\R^N)\cap B_b(\R^N)$. For any $n\in\N$, let us consider the function
$f_n=\vartheta_n(\varrho_n\star f)$, where $(\varrho_n)$ is a standard sequence of mollifiers, $(\vartheta_n)$ is a (standard) sequence
of cut-off functions such that $\chi_{B_n}\le\vartheta_n\le\chi_{B_{2n}}$ for any $n\in\N$.
It is well known that the sequence $(f_n)$ converges to $f$ in $L^p(\R^N)$ and pointwise in $\R^N$ as $n\to\infty$.
Clearly, $T_p(t)f_n$ tends to $T_p(t)f$ in $L^p(\R^N)$, as $n\to\infty$, for any $t>0$. Moreover, since
$\|f_n\|_{\infty}\le\|f\|_{\infty}$ for any $n\in\N$ and $(T(t))_{t\ge 0}$ is strong Feller,
$T(t)f_n$ converges to $T(t)f$ as $n\to\infty$, pointwise in $\R^N$, for any $t>0$ (see e.g., \cite[Cor. 4.7]{MPW}).
Using the semigroup property and \cite[Prop. 4.6]{MPW} (or \cite[Prop. 2.2.9]{libro}) we can infer that $T(t)f_n$ converges to $T(t)f$, locally uniformly in $\R^N$ as $n\to\infty$, for any $t>0$. We thus conclude that $T_p(t)f\equiv T(t)f$ for any $t>0$, and we are done.

To complete the proof we observe that, for any $g\in C_b(\R^N)$ and any $t>0$, the function $T(t)g$ belongs to $C^{2+\gamma}_{\rm loc}(\R^N)$, where $\gamma$ is as in the statement of the theorem. Since $T_p(t/2)f\in C_b(\R^N)$ for any $f\in L^p(\R^N)$, by Proposition \ref{prop-Cbgamma}, we obtain that $T_p(t)f=T(t/2)T_p(t/2)f\in C^{2+\gamma}_{\rm loc}(\R^N)$ as it has been claimed.
\end{proof}

\begin{corollary}\label{cor-irred}
For any $p\in (1,\infty)$ the semigroup $(T_p(t))_{t\ge 0}$ is irreducible, i.e., for any nonnegative and non identically vanishing function $f\in L^p(\R^N)$ and any $t>0$, it holds that $T_p(t)f>0$ in $\R^N$.
\end{corollary}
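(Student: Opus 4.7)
The plan is to reduce the irreducibility of $(T_p(t))_{t\ge 0}$ on $L^p(\R^N)$ to the already recorded irreducibility of $(T(t))_{t\ge 0}$ on $B_b(\R^N)$ via a truncation and consistency argument. Fix $p\in(1,\infty)$, $t>0$ and a nonnegative $f\in L^p(\R^N)$ that does not vanish identically. For $n\in\N$ set $f_n:=f\wedge n$, so that $f_n\in L^p(\R^N)\cap B_b(\R^N)$, $0\le f_n\le f$ and $\{f_n>0\}=\{f>0\}$; in particular $f_n$ is nonnegative and not identically zero.

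Next I would invoke the properties of $(T(t))_{t\ge 0}$ stated just before the statement: since $T(t)\chi_E>0$ pointwise in $\R^N$ for every measurable $E$ of positive Lebesgue measure, taking $E_\varepsilon:=\{f_n\ge\varepsilon\}$ (which has positive measure for $\varepsilon$ small enough) and using $f_n\ge\varepsilon\chi_{E_\varepsilon}$ together with the positivity of $T(t)$, one obtains $T(t)f_n\ge\varepsilon T(t)\chi_{E_\varepsilon}>0$ everywhere in $\R^N$. By Proposition \ref{prop-cons-Linf} the two semigroups agree on $B_b(\R^N)\cap L^p(\R^N)$, so $T_p(t)f_n=T(t)f_n$ (as continuous functions on $\R^N$) and thus $T_p(t)f_n>0$ pointwise.

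Finally, the positivity of $(T_p(t))_{t\ge 0}$ established in Theorem \ref{prop-2} gives $T_p(t)(f-f_n)\ge 0$ almost everywhere, hence
\[
T_p(t)f\ge T_p(t)f_n=T(t)f_n>0\quad\text{a.e.\ in }\R^N.
\]
Since Proposition \ref{prop-Cbgamma} ensures that $T_p(t)f$ admits a continuous representative (it lies in $C_b^{1+\gamma}(\R^N)$) and $T(t)f_n$ is already continuous, the a.e.\ inequality upgrades to a pointwise inequality, and we conclude $T_p(t)f>0$ everywhere in $\R^N$.

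There is no genuine obstacle here: the argument is essentially a packaging of three ingredients already at hand, namely positivity (Theorem \ref{prop-2}), $L^p$--$C_b$ consistency with the Markov-type semigroup $(T(t))_{t\ge 0}$ (Proposition \ref{prop-cons-Linf}), and the strong Feller/irreducibility properties of $(T(t))_{t\ge 0}$ recorded from \cite{libro,MPW}. The only mildly delicate point is that irreducibility of $(T(t))_{t\ge 0}$ is formulated pointwise on $B_b(\R^N)$, so one must truncate an arbitrary $f\in L^p(\R^N)$ to land in $B_b(\R^N)\cap L^p(\R^N)$ before applying it; the truncation $f\wedge n$ preserves both non-negativity and the support, which is all that is needed.
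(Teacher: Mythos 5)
Your argument is correct and follows essentially the same route as the paper: find a set of positive (and finite) measure on which $f$ is bounded below by a positive constant, apply the irreducibility of $(T(t))_{t\ge 0}$ to its characteristic function, transfer to $(T_p(t))_{t\ge 0}$ via the consistency recorded in Proposition \ref{prop-cons-Linf}, and conclude using positivity. The truncation $f\wedge n$ is a harmless extra step — the paper instead intersects $E_m=\{f>1/m\}$ with a large ball and applies consistency directly to $\chi_{E_m}\in B_b(\R^N)\cap L^p(\R^N)$, shaving one reduction off your chain.
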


\begin{proof}
Since the semigroup $(T(t))_{t\ge 0}$ is irreducible, from Proposition
\ref{prop-cons-Linf} we deduce that $T_p(t)\chi_E>0$ in $\R^N$, for any measurable set $E\subset\R^N$ with positive and finite Lebesgue measure,
and any $t>0$.

Let us now fix a nonnegative and non identically vanishing function $f\in L^p(\R^N)$. Then, there exists $m\in\N$ such that
the set $E_m=\{x\in\R^N: f(x)>1/m\}$ has positive Lebesgue measure. Up to intersecting $E_m$ with a sufficiently large ball, we can assume that
the set $E_m$ is bounded and non empty. Since $f\ge m^{-1}\chi_{E_m}$, from the positivity of the semigroup $(T_p(t))_{t\ge 0}$ we conclude that
$T_p(t)f\ge T_p(t)\chi_{E_m}>0$ everywhere in $\R^N$.
\end{proof}

For any $p\in (1,\infty)$, $(A_p,D(A_p))$ is the minimal realization of operator ${\mathcal A}$ in $L^p({\mathbb R}^N)$. The following proposition shows that $(A_p,D(A_p))$ actually coincides with the maximal realization of operator ${\mathcal A}$ in $L^p(\R^N)$.

\begin{proposition}
\label{prop-2.8}
For any $p\in (1,\infty)$ it holds that
\begin{eqnarray*}
D(A_p)=D_{\max,p}({\mathcal A}):=\{u\in L^p({\mathbb R}^N)\cap W^{2,p}_{\rm loc}({\mathbb R}^N):~{\mathcal A} u \in L^p({\mathbb R}^N)\}.
\end{eqnarray*}
\end{proposition}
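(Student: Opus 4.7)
The inclusion $D(A_p)\subset D_{\max,p}(\mathcal{A})$ is immediate from the definition: any $u\in D(A_p)$ lies in $L^p(\R^N)\cap W^{2,p}(\R^N)\subset L^p(\R^N)\cap W^{2,p}_{\rm loc}(\R^N)$, and since $a|D^2u|$ (hence $a|\Delta u|$) and $Vu$ all belong to $L^p(\R^N)$ by definition of $D(A_p)$, the same holds for $\mathcal{A}u=a\Delta u-Vu$.

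For the reverse inclusion, my strategy is to reduce the problem to a uniqueness statement. By Theorem~\ref{prop-2}, any $\lambda\in\C$ with $\re\lambda\ge\omega_0+1$ lies in $\rho(A_p)$. Fix such a $\lambda$. Given $u\in D_{\max,p}(\mathcal{A})$, set $f:=\lambda u-\mathcal{A}u\in L^p(\R^N)$ and $v:=R(\lambda,A_p)f\in D(A_p)$. Then $w:=u-v\in D_{\max,p}(\mathcal{A})$ satisfies the homogeneous equation $\lambda w-\mathcal{A}w=0$ a.e.\ in $\R^N$; if the only such $w$ is zero, then $u=v\in D(A_p)$ and the proposition is proved.

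To prove this uniqueness, I would repeat the integration by parts behind identity \eqref{fiore}, but with a localized test function $\overline{w}|w|^{p-2}\eta_n^p$, where $(\eta_n)\subset C^\infty_c(\R^N)$ satisfies $\chi_{B_n}\le\eta_n\le\chi_{B_{2n}}$ and $|\nabla\eta_n|\le C/n$. The integration is legitimate because $w\eta_n$ is compactly supported and $w\in W^{2,p}_{\rm loc}(\R^N)$. After taking the real part one arrives at
\begin{align*}
&\re\lambda\int_{\R^N}|w|^p\eta_n^p\,dx+\int_{\R^N}V|w|^p\eta_n^p\,dx+(p-1)\int_{\R^N}a|w|^{p-4}|\re(\overline{w}\nabla w)|^2\chi_{\{w\neq 0\}}\eta_n^p\,dx\\
&\qquad +\int_{\R^N}a|w|^{p-4}|\im(\overline{w}\nabla w)|^2\chi_{\{w\neq 0\}}\eta_n^p\,dx=I_n+I\!I_n,
\end{align*}
where $I_n:=-\re\int_{\R^N}\nabla a\cdot\nabla w\,\overline{w}|w|^{p-2}\eta_n^p\,dx$ and $I\!I_n:=-p\re\int_{\R^N}a\nabla w\cdot\nabla\eta_n\,\overline{w}|w|^{p-2}\eta_n^{p-1}\,dx$. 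Exploiting the bound $|\nabla a|\le\kappa\sqrt{a}$ for $|x|\ge 1/2$ (Step~1 of the proof of Theorem~\ref{prop-2}) and Young's inequality, $I_n$ is absorbed into the coercive gradient terms on the left, up to a multiple of $\int|w|^p\eta_n^p\,dx$ and a local contribution on $B_{1/2}$ that is harmless thanks to $w\in W^{2,p}_{\rm loc}(\R^N)$ (one may alternatively preliminarily regularize $a$ near $0$, as was done for $\hat q$ in Step~2 of Theorem~\ref{prop-2}). A parallel Young estimate on $I\!I_n$ yields a first piece that absorbs as well, plus a remainder $C_\varepsilon\int_{\R^N}a|\nabla\eta_n|^2|w|^p\eta_n^{p-2}\,dx$; on $\supp\nabla\eta_n\subset B_{2n}\setminus B_n$ the weight $a|\nabla\eta_n|^2$ is bounded by $Cn^{\alpha-2}\le C$ (using $\alpha\le 2$), so this remainder is of order $\int_{B_{2n}\setminus B_n}|w|^p\,dx$ and tends to $0$ as $n\to\infty$ by dominated convergence, since $w\in L^p(\R^N)$. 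Choosing $\re\lambda$ large enough to dominate all the absorbed constants and letting $n\to\infty$, the coercive left-hand side forces $\|w\|_{L^p(\R^N)}=0$.

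The main obstacle is the critical scaling at infinity when $\alpha=2$: here $a|\nabla\eta_n|^2$ fails to decay on the cut-off annulus, so the vanishing of the error $I\!I_n$ rests entirely on the tail decay of $|w|^p$, not on any uniform smallness of the weight. Any further growth of $a$ would break this argument, which is consistent with the known loss of generation results for $\alpha>2$ (cf.\ \cite{metafune-spina-2}). A milder technicality is the singular behaviour of $\nabla a$ at the origin for $\alpha<1$, handled by the $\hat q$-type regularization already invoked in Step~2 of the proof of Theorem~\ref{prop-2}.
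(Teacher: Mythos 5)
Your proof is correct and follows essentially the same route as the paper's: reduce to uniqueness for the homogeneous equation via $w=u-R(\lambda,A_p)f$, integrate $(\lambda w-\mathcal{A}w)\overline{w}|w|^{p-2}$ against a cutoff, absorb the gradient error by Young's inequality using $|\nabla a|\lesssim a^{1/2}$, and observe that $\alpha\le 2$ keeps the cutoff remainder $a|\nabla\eta_n|^2$ bounded so that it vanishes by tail integrability of $|w|^p$. One caution on the $\alpha\in(0,1)$ case: the phrase ``a local contribution on $B_{1/2}$ that is harmless thanks to $w\in W^{2,p}_{\rm loc}$'' is not rigorous as stated, since $\int_{B_{1/2}}|\nabla a|\,|\nabla w|\,|w|^{p-1}\,dx$ is a fixed finite quantity not controlled by $\|w\|_{L^p(\R^N)}^p$, and therefore cannot be absorbed by merely enlarging $\lambda$; the regularization of $a$ near the origin that you offer as an alternative is in fact necessary and is exactly what the paper does (it introduces $\hat a=\varphi\tilde a+(1-\varphi)a$ and treats $(\hat a-a)\Delta v$ as a bounded perturbation using $\|a-\tilde a\|_{L^\infty(B_2)}\le 1/2$ and the equation $a\Delta v=(V+\lambda)v$).
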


\begin{proof}
Clearly, we have only to prove the inclusion ``$\supset$''. Fix $p\in (1,\infty)$,
$u\in D_{\max,p}({\mathcal A})$,
$\lambda\in\rho(A_p)\cap \R$ and set
$f:=\lambda u - {\mathcal A} u$. Without loss of generality, we can assume that $u$ is a real-valued function.
The function $v:= u-R(\lambda,A_p)f$
satisfies the equation $\lambda v - {\mathcal A} v = 0$.
We shall show that $v\equiv 0$, provided
$\lambda$ is large enough.

We first consider the case $\alpha\in [1,2$].
Integrating the identity $(\lambda v - {\mathcal A} v)v|v|^{p-2}\vartheta_n^2 = 0$ by parts on ${\mathbb R}^N$,
where $(\vartheta_n)$ is a standard sequence of cut-off functions, we get
\begin{align}
0=&\lambda \int_{{\mathbb R}^N}|v|^p\vartheta_n^2dx
+\int_{{\mathbb R}^N}V|v|^p\vartheta_n^2dx+ (p-1)\int_{{\mathbb R}^N}a|\nabla v|^2|v|^{p-2}\vartheta_n^2\chi_{\{v\neq 0\}}dx
\nonumber\\[1mm]
&+2\int_{{\mathbb R}^N}a\vartheta_n|v|^{p-2}v\nabla v\cdot\nabla\vartheta_n\,dx
+\int_{{\mathbb R}^N}\vartheta_n^2|v|^{p-2}v\nabla a\cdot\nabla v\,dx.
\label{mart-1}
\end{align}

Note that $C_1:=\sup_{n\in\N}\|a^{1/2}|\nabla\vartheta_n|\,\|_{\infty}<\infty$. Hence,
\begin{align}
&\left |\int_{{\mathbb R}^N}a\vartheta_n|v|^{p-2}v\nabla v\cdot\nabla\vartheta_n\,dx\right |\notag\\
\le &C_1\int_{{\mathbb R}^N\setminus B_n}a^{\frac{1}{2}}|v|^{p-1}|\nabla v|\vartheta_ndx\nonumber\\
\le & C_1\bigg (\int_{{\mathbb R}^N} a|\nabla v|^2|v|^{p-2}\vartheta_n^2\chi_{\{v\neq 0\}}dx\bigg )^{\frac{1}{2}}
\bigg (\int_{\R^N\setminus B_n}|v|^pdx\bigg )^{\frac{1}{2}}\nonumber\\[1mm]
\le&\varepsilon\int_{{\mathbb R}^N}a|\nabla
v|^2|v|^{p-2}\vartheta_n^2\chi_{\{v\neq 0\}}dx +\frac{C_1^2}{4\varepsilon}\int_{\R^N\setminus B_n}|v|^pdx
 \label{mart-2}
\end{align}
and, since $|\nabla a|\le \alpha a^{1/2}$, we can estimate (in a completely similar way)
\begin{align}
\left |\int_{{\mathbb R}^N}\vartheta_n^2|v|^{p-2}v\nabla a\cdot\nabla v\,dx\right |
\le\varepsilon\int_{{\mathbb R}^N}a|\nabla v|^2|v|^{p-2}\vartheta_n^2\chi_{\{v\neq 0\}}dx+\frac{\alpha^2}{4\varepsilon}\int_{{\mathbb R}^N}|v|^p\vartheta_n^2dx,
\label{mart-3}
\end{align}
for any $\varepsilon>0$.
Replacing \eqref{mart-2} and \eqref{mart-3} into \eqref{mart-1} and taking $\varepsilon=(p-1)/3$, gives
\begin{align}
\left (\lambda-\frac{3\alpha^2}{4(p-1)}\right )\int_{{\mathbb R}^N}|v|^p\vartheta_n^2dx
-\frac{3C_1^2}{2(p-1)}\int_{\R^N\setminus B_n}|v|^pdx\le 0.
\label{mart-4}
\end{align}
Letting $n$ tend to $\infty$ yields $v\equiv 0$, if we take $\lambda$ large enough.

Let us now assume that $\alpha\in [0,1)$ and consider the function $\hat a$
defined by $\hat a=\varphi\tilde a+(1-\varphi)a$, where $\tilde a$ is any
smooth function such that $\tilde a\ge 1$ and $\|a-\tilde a\|_{L^{\infty}(B_2)}\le 1/2$, and $\varphi\in C^{\infty}_c(\R^N)$ satisfies
$\chi_{B_1}\le\varphi\le\chi_{B_2}$. Function $\tilde a$ can be obtained, for instance, regularizing by convolution $a$.
Let $\hat{\mathcal A}$ be the operator defined as $\mathcal{A}$ with $a$ being replaced by $\hat a$.
As in the case when $\alpha\ge 1$, we multiply the equation $\lambda v-{\mathcal A}v=0$ by $v|v|^{p-2}\vartheta_n^2$. We get
\begin{align}
0=&\int_{\R^N}(\lambda v-{\mathcal A}v)v|v|^{p-2}\vartheta_n^2dx\notag\\
=&\int_{\R^N}(\lambda v-\hat{\mathcal A}v)v|v|^{p-2}\vartheta_n^2dx
+\int_{B_2}(\hat a-a)v|v|^{p-2}\vartheta_n^2\Delta vdx.
\label{max-1}
\end{align}
Since $\hat a$ is continuously differentiable in $\R^N$ and $|\nabla\hat a|\le\hat\kappa\hat a^{1/2}$ for some positive constant $\hat\kappa$, we can integrate by parts the first term in the last side of \eqref{max-1}. Arguing as in the proof of \eqref{mart-4} we can estimate
\begin{align}
\int_{\R^N}(\lambda v-\hat{\mathcal A}v)v|v|^{p-2}\vartheta_n^2dx\ge&\left (\lambda-\frac{3\hat\kappa^2}{4(p-1)}\right )\int_{\R^N}|v|^p\vartheta_n^2dx\notag\\
&-\frac{3\hat C_1^2}{2(p-1)}\int_{\R^N\setminus B_n}|v|^pdx,
\label{max-2}
\end{align}
where $\hat C_1=\sup_{n\in\N}\|\hat a^{1/2}|\nabla\vartheta_n|\,\|_{\infty}$.
As far as the other term in the last side of \eqref{max-1} is concerned, we observe that, since $a\ge 1$ in $\R^N$, it holds that
\begin{align}
\left |\int_{B_2}(\hat a-a)v|v|^{p-2}\vartheta_n^2\Delta vdx\right |
\le &\|a-\hat a\|_{L^{\infty}(B_2)}\int_{B_2}|v|^{p-1}|a\Delta v|dx\notag\\
\le & \|a-\tilde a\|_{L^{\infty}(B_2)}\|v\|_{L^p(B_2)}^{p-1}\|a\Delta v\|_{L^p(B_2)}\notag\\
=& \frac{1}{2}\|v\|_{L^p(B_2)}^{p-1}\|(V+\lambda)v\|_{L^p(B_2)}\notag\\
\le &\frac{1}{2}\left (2^{\beta}+\lambda\right )\|v\|_{L^p(\R^N)}^p.
\label{max-3}
\end{align}
From \eqref{max-1}, \eqref{max-2} and \eqref{max-3} we obtain
\begin{align*}
0\ge&\left (\lambda-\frac{3\hat\kappa^2}{4(p-1)}\right )\int_{\R^N}|v|^p\vartheta_n^2dx
-\frac{3\hat C_1^2}{2(p-1)}\int_{\R^N\setminus B_n}|v|^pdx\\
&-\frac{1}{2}(2^{\beta}+\lambda)\int_{\R^N}|v|^pdx.
\end{align*}
Again, letting $n\to\infty$ we get
\begin{eqnarray*}
\left (\frac{\lambda}{2}-\frac{3\hat\kappa^2}{4(p-1)}-2^{\beta-1}\right )\int_{\R^N}|v|^pdx\le 0,
\end{eqnarray*}
and, then, choosing $\lambda$ large enough, we conclude that $v\equiv 0$.
\end{proof}

\begin{proposition}
\label{prop-2.13}
For any $\alpha\in [0,2]$, any $\beta>0$ and any $p\in (1,\infty)$ the spectrum of $A_p$
consists of a sequence of negative real eigenvalues which accumulates at $-\infty$. Moreover, $\sigma(A_p)$ is independent of $p$.
\end{proposition}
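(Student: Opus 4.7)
The plan has three stages: establish compactness of the resolvent of $A_p$, which makes $\sigma(A_p)$ a discrete set of eigenvalues; prove reality and strict negativity of those eigenvalues via comparison with a self-adjoint realization of $\mathcal{A}$ in the weighted space $L^2(\R^N,a^{-1}dx)$; and derive $p$-independence by combining consistency of the resolvents with ultracontractivity of the semigroup.

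For the first step I would prove that $D(A_p)$ embeds compactly into $L^p(\R^N)$. The two ingredients are local boundedness in $W^{2,p}(B_R)$ for every $R>0$ (via Corollary \ref{cor-2.6}) combined with Rellich--Kondrachov, and uniform tail decay coming from the condition $Vu\in L^p(\R^N)$: since $V(x)=|x|^{\beta}$ with $\beta>0$, the Chebyshev-type estimate $\int_{|x|>R}|u|^{p}\,dx\le R^{-p\beta}\|Vu\|_{L^p(\R^N)}^{p}$ makes tails uniformly small for $u$ ranging over a bounded subset of $D(A_p)$. This yields compactness of $R(\lambda,A_p)$, hence of the analytic semigroup $T_p(t)$ for every $t>0$ (Theorem \ref{prop-2}), so $\sigma(A_p)$ consists of isolated eigenvalues of finite algebraic multiplicity whose real parts accumulate only at $-\infty$.

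Next I would consider the densely defined non-negative symmetric form $\mathfrak{a}(u,v):=\int_{\R^N}\nabla u\cdot\overline{\nabla v}\,dx+\int_{\R^N}Va^{-1}u\bar v\,dx$ on $C^{\infty}_c(\R^N)\subset L^{2}(\R^N,a^{-1}dx)$. A direct integration by parts yields $(\mathcal{A}u,v)_{L^{2}(\R^N,a^{-1}dx)}=-\mathfrak{a}(u,v)$ for $u,v\in C^{\infty}_c(\R^N)$, so the Friedrichs extension $\hat A$ of $\mathfrak{a}$ is self-adjoint and non-negative on $L^{2}(\R^N,a^{-1}dx)$ and coincides with $-\mathcal{A}$ on $C^{\infty}_c(\R^N)$. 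The same compactness reasoning as above (with $a^{-1}\le 1$ keeping the tails under control) shows that $\hat A$ has compact resolvent, so $\sigma(\hat A)\subset[0,\infty)$ is a discrete set of eigenvalues. Any eigenfunction $\varphi$ of $A_p$ is classical and bounded by Propositions \ref{prop-Cbgamma} and \ref{prop-cons-Linf}, and the integrability $V\varphi\in L^p(\R^N)$ combined with $\varphi\in L^{\infty}(\R^N)$ yields enough decay to place $\varphi$ in the form domain of $\hat A$. Then $\hat A\varphi=-\lambda\varphi$ forces $\lambda\in(-\infty,0]$, and the possibility $\lambda=0$ is excluded since $\mathfrak{a}(\varphi,\varphi)=0$ combined with $V>0$ a.e.\ implies $\varphi\equiv 0$.

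Finally, for $p$-independence let $\lambda\in\sigma(A_p)$ with eigenfunction $\varphi\in D(A_p)$. The ultracontractivity of $(T_p(t))$ (Proposition \ref{prop-Cbgamma}) applied to $e^{\lambda t}\varphi=T_p(t)\varphi$ shows $\varphi\in L^{\infty}(\R^N)$; interpolation then gives $\varphi\in L^{q}(\R^N)$ for $q\in[p,\infty)$, while for $q\in(1,p)$ one combines $\varphi\in L^{\infty}(\R^N)$ with the weighted integrability $\int_{\R^N}(1+|x|)^{p\beta}|\varphi|^{p}\,dx<\infty$ through a H\"older estimate (bootstrapping if needed) to reach every such $q$. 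Consistency of the resolvents (Theorem \ref{prop-2}) then forces $\varphi\in D(A_q)$ with $A_q\varphi=\lambda\varphi$, so $\lambda\in\sigma(A_q)$. The main obstacle I anticipate is the transfer of eigenfunctions between $L^{p}(\R^N)$ and $L^{2}(\R^N,a^{-1}dx)$, namely extracting from the sole hypothesis $V\varphi\in L^p(\R^N)$ enough decay to place $\varphi$ in the form domain of $\hat A$; this is the delicate point on which both the reality of the eigenvalues and the $p$-independence of the spectrum hinge.
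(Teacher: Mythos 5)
Your first stage (compactness of $D(A_p)\hookrightarrow L^p(\R^N)$ via the tail estimate coming from $Vu\in L^p$ plus Rellich--Kondrachov on balls) is exactly the paper's Step~1. After that, your plan diverges from the paper in two ways, and the divergence is what produces the difficulty you flag as delicate.

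The paper establishes $p$-independence \emph{first}: starting from consistency of the semigroups, the Laplace transform gives $R(\lambda,A_p)f=R(\lambda,A_q)f$ for $f\in C^\infty_c(\R^N)$ and large real $\lambda$; since $\sigma(A_p)$ and $\sigma(A_q)$ are discrete, $\C\setminus(\sigma(A_p)\cup\sigma(A_q))$ is connected, so the scalar identity $\int g R(\lambda,A_p)f\,dx=\int g R(\lambda,A_q)f\,dx$ continues analytically; a spectral projection argument then shows that an eigenvalue of $A_p$ that is not in $\sigma(A_q)$ would have zero Riesz projection, a contradiction. Only \emph{after} this is the reality and strict negativity established, and only for $p=2$: one multiplies $\lambda u-A_2u=0$ by $a^{-1}\bar u$ and integrates by parts. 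This is legitimate because $u\in D(A_2)$ already supplies $a^{1/2}|\nabla u|\in L^2$ and $Vu\in L^2$, so no transfer of the eigenfunction between spaces is needed at all.

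You reverse this order, and that is exactly what opens the gap you acknowledge. To place an $A_p$-eigenfunction $\varphi$ in the form domain of your Friedrichs realization $\hat A$ on $L^2(\R^N,a^{-1}dx)$ you need $\nabla\varphi\in(L^2(\R^N))^N$. Membership in $D(A_p)$ gives $a^{1/2}|\nabla\varphi|\in L^p(\R^N)$, which for $p\neq 2$ does not yield $\nabla\varphi\in L^2$. The decay you invoke ($V\varphi\in L^p$ together with $\varphi\in L^\infty$) controls $\varphi$ itself, not $\nabla\varphi$; so as written the second stage does not close.

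The fix is a re-ordering that you essentially already have in hand. Your third stage — ultracontractivity to get $\varphi\in L^\infty$, then a H\"older bootstrap on the weight $|x|^{\beta}$, invoking either Proposition~\ref{prop-2.8} or the resolvent identity $R(\mu,A_q)(\mu-\lambda)\varphi=\varphi$ (for $\mu$ real large and $\varphi\in L^q$) to convert $\varphi\in L^q$ into $\varphi\in D(A_q)$ and hence $V\varphi\in L^q$ before widening the exponent range — does put $\varphi$ in $D(A_q)$ for every $q\in(1,\infty)$, in particular $q=2$. Once that is known, the whole Friedrichs-extension apparatus in your second stage is superfluous: the direct integration by parts in $L^2(\R^N,a^{-1}dx)$ (the paper's Step~3) gives reality and strict negativity immediately. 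Your $p$-independence argument (eigenfunction regularity plus consistency) is genuinely different from the paper's Laplace-transform/analytic-continuation/spectral-projection argument; both work, but the paper's version requires neither ultracontractivity nor the bootstrap, only discreteness of the spectra.
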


\begin{proof}
The proof is split into three steps. In the first one we prove that, for any $p\in (1,\infty)$, $\sigma(A_p)$ consists of isolated eigenvalues. Then, we prove that $\sigma(A_p)$ is independent of $p$ and, finally, we show that the eigenvalues of $A_p$ are real and negative.

{\em Step 1.} Fix $p\in (1,\infty)$. To prove that the spectrum of $A_p$
consists of eigenvalues only, let us show that $D(A_p)$ is compactly embedded into $L^p(\R^N)$ for any $p\in (1,\infty)$.
This will yield immediately that the resolvent operator $R(\lambda,A_p)$ is compact in $L^p(\R^N)$ for any $\lambda\in\rho(A_p)$.
Hence, its spectrum (and, consequently, the spectrum of $A_p$) consists of eigenvalues.

Since
\begin{eqnarray*}
\int_{\R^N}|x|^{\beta p}|u(x)|^pdx\le C_1\left (\int_{\R^N}|u(x)|^pdx+\int_{\R^N}|A_pu(x)|^pdx\right ),
\end{eqnarray*}
for some positive constant $C_1$, independent of $u$, taking Corollary \ref{cor-2.6} into account we can easily conclude that there exists a positive constant $C_2$, independent of $u$ as well, such that
\begin{equation}
\int_{\R^N}|x|^{\beta p}|u(x)|^pdx\le C_2,
\label{stima-per-comp}
\end{equation}
for any $u\in {\mathcal B}:=\{v\in D(A_p): \|v\|_{D(A_p)}\le 1\}$.
This estimate yields the compactness of ${\mathcal B}$ in $L^p(\R^N)$ by a standard argument. Anyway for the reader's convenience we give some details.
To prove that ${\mathcal B}$ is compact in $L^p(\R^N)$, we show that it is totally bounded.
By estimate \eqref{stima-per-comp} we deduce that
\begin{equation}
\int_{\R^N\setminus B_M}|u(x)|^pdx\le M^{-\beta p}\int_{\R^N\setminus B_M}|x|^{\beta p}|u(x)|^pdx\le C_2M^{-\beta p},\qquad\;\,u\in {\mathcal B}.
\label{stima-per-comp-2}
\end{equation}
Let us fix $\varepsilon>0$ and let $M_{\varepsilon}$ be large enough such that
\begin{eqnarray*}
\int_{\R^N\setminus B_{M_{\varepsilon}}}|u(x)|^pdx\le \frac{1}{2}\varepsilon^p,\qquad\;\,u\in {\mathcal B}.
\end{eqnarray*}
Since $D(A_p)$ is continuously embedded into $W^{2,p}(\R^N)$, the set ${\mathcal B}_{|B_{M_{\varepsilon}}}$ of the restrictions  to $B_{M_{\varepsilon}}$ of all the functions in ${\mathcal B}$ is continuously embedded in $W^{2,p}(B_{M_{\varepsilon}})$. As this latter space is compactly embedded in $L^p(B_{M_{\varepsilon}})$,
there exist $n_{\varepsilon}\in\N$ and functions $f_1,\ldots,f_{n_{\varepsilon}}$ in $L^p(B_{M_{\varepsilon}})$ such that,
for any $u\in {\mathcal B}$ and some $j=j(u)\in\{1,\ldots,n_{\varepsilon}\}$,
\begin{equation}
\int_{B_{M_{\varepsilon}}}|u(x)-f_j(x)|^pdx\le\frac{1}{2}\varepsilon^p.
\label{stima-per-comp-3}
\end{equation}
Let us now denote by $\tilde f_j$ ($j=1,\ldots,n_{\varepsilon}$) the function which equals $f_j$ in $B_{M_{\varepsilon}}$ and identically vanishes elsewhere in $\R^N$. Using \eqref{stima-per-comp-2} and \eqref{stima-per-comp-3} we obtain that $\|u-\tilde f_j\|_{L^p(\R^N)}\le\varepsilon$, and this shows that ${\mathcal B}$ is totally bounded in $L^p(\R^N)$.

{\em Step 2.} Let us now show that the spectrum of $A_p$ is independent of $p\in (1,\infty)$. The proof that we present is obtained adapting the arguments in the proof of \cite[Cor. 1.6.2]{davies}.
Fix $p,q\in (1,\infty)$ and $f\in C^{\infty}_c(\R^N)$. By the proof of Theorem \ref{prop-2} we know that
 the operators $A_p$ and $A_q$ are sectorial. Hence, we can determine $\omega>0$ such that
the interval $(\omega,\infty)$ is contained in both the resolvent sets of operators $A_p$ and $A_q$.

Since the semigroups $(T_p(t))_{t\ge 0}$ and $(T_q(t))_{t\ge 0}$ coincide on
$L^p(\R^N)\cap L^q(\R^N)$, $T_p(t)f=T_q(t)f$ for any $t>0$. Therefore, for $\lambda>\omega$ we get
\begin{eqnarray*}
R(\lambda,A_p)f=\int_0^{\infty}e^{-\lambda t}T_p(t)fdt=\int_0^{\infty}e^{-\lambda t}T_q(t)fdt=R(\lambda,A_q)f.
\end{eqnarray*}
In particular,
\begin{eqnarray*}
\int_{\R^N}gR(\lambda,A_p)fdx=\int_{\R^N}gR(\lambda,A_q)fdx,\qquad\;\,\lambda>\omega,
\end{eqnarray*}
for any $g\in C^{\infty}_c(\R^N)$.
Note that, since $\sigma(A_p)$ and $\sigma(A_q)$ consist of isolated eigenvalues, $\C\setminus (\sigma(A_p)\cup\sigma(A_q))$ is a connected open set in $\C$. Hence, the previous equality can be extended to any $\lambda\in\C\setminus (\sigma(A_p)\cup\sigma(A_q))$.
The arbitrariness of $g$ shows that $R(\lambda,A_p)f=R(\lambda,A_q)f$ for any $\lambda\in\C\setminus (\sigma(A_p)\cup\sigma(A_q))$.
Let us now fix $\lambda_0\in \sigma(A_p)$. Since both $\sigma(A_p)$ and $\sigma(A_q)$ admit no accumulation points in $\mathbb C$,
$\lambda_0$ is isolated in $\sigma(A_p)\cup\sigma(A_q)$. Hence, we can determine $\varepsilon>0$ small enough such that
$B_{\varepsilon}(\lambda_0)\setminus\{\lambda_0\}\subset \C\setminus (\sigma(A_p)\cup\sigma(A_q))$.
Let $P$ be the spectral projection associated to the eigenvalue $\lambda_0\in\sigma(A_p)$, which is defined by
\begin{eqnarray*}
Ph=\frac{1}{2\pi i}\int_{\partial B_{\varepsilon}(\lambda_0)}R(\lambda,A_p)hd\lambda,\qquad\;\,h\in L^p(\R^N),
\end{eqnarray*}
where $\partial B_{\varepsilon}(\lambda_0)$ is oriented counterclockwise. If $\lambda_0\notin\sigma(A_q)$, from the above arguments
we obtain that
\begin{eqnarray*}
Pf=\frac{1}{2\pi i}\int_{\partial B_{\varepsilon}(\lambda_0)}R(\lambda,A_q)fd\lambda=0,
\end{eqnarray*}
which implies that $P\equiv 0$ by density: a contradiction.
Hence, $\sigma(A_p)\subset\sigma(A_q)$. Since $p$ and $q$ have been arbitrarily fixed, $\sigma(A_q)=\sigma(A_p)$.

{\em Step 3.} We now prove that, for any $p\in (1,\infty)$, the spectrum of $A_p$ consists of negative eigenvalues. In view of Step 2, we can limit ourselves to dealing with the case $p=2$. Let $\lambda\in\sigma(A_2)$ and $u\in D(A_2)$ be such that
$\lambda u-A_2u=0$. Multiplying both sides of this equality by $a^{-1}\overline{u}$ and integrating by parts we
get
\begin{align*}
0=&\lambda\int_{\R^N}\frac{|u|^2}{a}dx-\int_{\R^N}\overline{u}\Delta udx
+\int_{\R^N}V\frac{|u|^p}{a}dx\\
=&\lambda\int_{\R^N}\frac{|u|^2}{a}dx+\int_{\R^N}|\nabla u|^2dx
+\int_{\R^N}V\frac{|u|^2}{a}dx.
\end{align*}
From the first and last side of this chain of equalities we immediately infer that $\lambda$ is real and negative.

Finally, we observe that the eigenvalues of operator $A_p$ can be ordered into a sequence diverging to $-\infty$. Indeed, if they were a finite number, the operator $A_p$ would be bounded in $L^p(\R^N)$, which, of course, cannot be the case.
This concludes the proof.
\end{proof}

To conclude this subsection we prove the following result, which will be used in Section \ref{sect-3}.

\begin{proposition}
\label{thm-eigenfunct}
Suppose that $\alpha\in [0,2]$ and $\beta>0$. Then, for any $p\in (1,\infty)$, the eigenspace corresponding to the largest eigenvalue $\lambda_0$ of $A_p$ is one-dimensional and is spanned by a strictly positive function $\phi$, which is radial, belongs to $C_b^{1+\gamma}(\R^N)\cap C^2(\R^N)$ for any $\gamma\in (0,1)$ and tends to $0$ as $|x|\to\infty$.
\end{proposition}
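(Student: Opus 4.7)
The plan is to deduce the proposition from three ingredients: the Krein--Rutman / de Pagter theorem applied to the semigroup $(T_p(t))_{t\ge 0}$, the rotational symmetry of $\mathcal A$, and the regularity/integrability results already proved in Propositions \ref{prop-Cbgamma} and \ref{prop-cons-Linf}.

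The first step is to observe that each $T_p(t)$ is compact. Since $(T_p(t))_{t\ge 0}$ is analytic, $T_p(t)L^p(\R^N)\subset D(A_p)$ for every $t>0$, and by Step 1 of the proof of Proposition \ref{prop-2.13} the embedding $D(A_p)\hookrightarrow L^p(\R^N)$ is compact (this is precisely where $\beta>0$ is used, via the weight $|x|^{\beta p}$). Combined with positivity (Theorem \ref{prop-2}) and irreducibility (Corollary \ref{cor-irred}), the Perron--Frobenius theorem for positive compact irreducible operators yields that the spectral radius $r(T_p(t))$ is an algebraically simple dominant eigenvalue of $T_p(t)$ with a strictly positive eigenfunction. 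Because $(T_p(t))_{t\ge 0}$ is compact, the spectral mapping theorem gives $\sigma(T_p(t))\setminus\{0\}=e^{t\sigma(A_p)}$, so $r(T_p(t))=e^{t\lambda_0}$, where $\lambda_0=\max\{\re\lambda:\lambda\in\sigma(A_p)\}$ is the negative real eigenvalue provided by Proposition \ref{prop-2.13}. Translating back, $\lambda_0$ is a simple eigenvalue of $A_p$ whose eigenspace is spanned by a strictly positive function $\phi$.

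Next I would establish the radiality of $\phi$ from symmetry. Since $a(x)=1+|x|^\alpha$ and $V(x)=|x|^\beta$ are radial, $\mathcal A$ commutes with every rotation $R\in O(N)$; hence if $\phi\in D(A_p)$ satisfies $A_p\phi=\lambda_0\phi$, then so does $\phi\circ R$. By the one-dimensionality of the eigenspace, $\phi\circ R=c(R)\phi$, and since both sides are strictly positive with equal $L^p$-norms, $c(R)=1$ for every $R$, so $\phi$ is radial. For the regularity and decay, the identity $\phi=e^{-\lambda_0}T_p(1)\phi$ combined with Proposition \ref{prop-Cbgamma} gives $\phi\in C_b^{1+\gamma}(\R^N)$ for every $\gamma\in (0,1)$, and Proposition \ref{prop-cons-Linf} then upgrades this to $\phi\in C^2(\R^N)$. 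Being uniformly continuous and belonging to $L^p(\R^N)$, the function $\phi$ must vanish at infinity, by a standard argument: otherwise $|\phi|$ would stay bounded below on balls of fixed radius around a sequence $|x_n|\to\infty$, which, after extracting a subsequence making these balls disjoint, would contradict $\phi\in L^p(\R^N)$.

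The main obstacle is the first step: invoking Krein--Rutman in the form that simultaneously yields algebraic simplicity of the dominant eigenvalue and identifies it with $\lambda_0$. For this one needs both the compactness of $T_p(t)$ (which rests on the compact embedding of $D(A_p)$ in $L^p(\R^N)$, hence on $\beta>0$) and irreducibility in the strong pointwise sense provided by Corollary \ref{cor-irred}. Once these two facts are in hand, the remaining symmetry, regularity, and decay assertions are essentially immediate from results already in the paper.
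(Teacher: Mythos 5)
Your proposal is correct and follows essentially the same route as the paper: compactness (from the compact embedding $D(A_p)\hookrightarrow L^p(\R^N)$ proved in Proposition \ref{prop-2.13}), positivity (Theorem \ref{prop-2}), and irreducibility (Corollary \ref{cor-irred}) give a simple dominant eigenvalue with a strictly positive eigenfunction; rotation invariance gives radiality; and $\phi=e^{-\lambda_0}T_p(1)\phi$ together with Propositions \ref{prop-Cbgamma} and \ref{prop-cons-Linf} gives the regularity. The paper packages the Perron--Frobenius step at the generator level — showing $\lambda_0$ is a pole of $R(\cdot,A_p)$ of finite algebraic multiplicity via Riesz--Schauder and then invoking the abstract results of Nagel \cite[A-III Prop.\ 2.5(iii), C-III Prop.\ 3.5]{nagel86} for irreducible positive semigroups — whereas you work directly with the compact operator $T_p(t)$, apply Krein--Rutman/de Pagter, and transfer back to $A_p$ via the spectral mapping theorem for compact semigroups; these are equivalent presentations of the same idea. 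The two smaller variations are also fine and arguably a little cleaner than the paper's text: you pin down $c(R)=1$ by comparing $L^p$-norms instead of evaluating at $x=0$, and you derive the decay at infinity from uniform continuity (via $\phi\in C^{1+\gamma}_b$) together with $\phi\in L^p(\R^N)$, whereas the paper asserts it from $\phi=e^{-\lambda_0}T_p(1)\phi$ without further comment.
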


\begin{proof}
Fix $p\in (1,\infty)$. By Proposition \ref{prop-2.13} we know that the spectrum of $A_p$ consists of a sequence of negative isolated eigenvalues with finite
geometric multiplicity. It follows from \cite[A-III, Prop. 2.5(iii)]{nagel86} and the Riesz-Schauder theory for compact operators (cf. \cite[VII.4.5]{DS58}) that $\lambda_0$ is a pole of $R(\cdot ,A_p)$ of finite algebraic multiplicity.
From Corollary \ref{cor-irred} and \cite[C-III, Prop. 3.5]{nagel86} we now
conclude that the eigenspace corresponding to $\lambda_0$ is one-dimensional and is spanned by a strictly positive function $\phi$.
Since $e^{\lambda_0}\phi=T_p(1)\phi$, from Propositions \ref{prop-Cbgamma} and \ref{prop-cons-Linf} it follows immediately that $\phi\in C^{1+\gamma}_b(\R^N)\cap C^2(\R^N)$ for any $\gamma\in (0,1)$. In particular, $\phi=e^{-\lambda_0}T_N(1)\phi$. Hence,
$\phi(x)$ vanishes as $|x|\to\infty$.

To complete the proof let us prove that $\phi$ is radial. For this purpose, we observe that, since the coefficients of operator ${\mathcal A}$ are radial, the function $x\mapsto\phi_R(x):=\phi(Rx)$ is an eigenfunction of $\mathcal{A}$ associated to the eigenvalue $\lambda_0$, for any orthogonal matrix $R$. Hence, $\phi_R$ and $\phi$ should be proportional. Since they coincide at $x=0$, they should coincide everywhere in $\R^N$. Hence, $\phi(Rx)=\phi(x)$ for any $x\in\R^N$ and this shows that
$\phi$ is radial.
\end{proof}

\begin{remark}
\label{cor-2.16}
From Proposition \ref{thm-eigenfunct} we know that the largest eigenvalue $\lambda_0$ of $A_p$ is simple. So, it follows from \cite[Cor. 2.3.5]{lunardi} that
there exists a positive constant $M_p$ such that
\begin{equation}
\|T_p(t)\|_{L(L^p(\R^N))}\le M_pe^{\lambda_0t},\qquad\;\,t\ge 0.
\label{spectr-behav}
\end{equation}
\end{remark}

\section{Heat kernel estimates}\label{sect-3}
It follows from the local regularity of the coefficients of the operator $\mathcal{A}$ that the semigroup $(T_p(t))_{t\ge 0}$ generated by $A_p$ admits a heat kernel $k(t,x,y)$ such that
\begin{eqnarray*}
T_p(t)f(x)=\int_{\R^N}k(t,x,y)f(y)\,dy,\qquad\;\, t>0,\;\,x\in \R^N,
\end{eqnarray*}
for all $f\in L^p(\R^N)$ (cf. \cite{MPW}).

In this section we propose to prove some upper estimates for $k$. For this purpose, let us
estimate the eigenfunction $\phi$ corresponding to the largest eigenvalue $\lambda_0$ of the operator $\mathcal{A}$.
\begin{proposition}\label{estim-eigenfunct}
Assume that $\beta >0$ and $\alpha \in [0,2)$. Then, there exist constants $C_1,\,C_2>0$ such that
\begin{eqnarray*}
C_1f_0(x)\le \phi(x)\le C_2f_{2\lambda_0}(x),
\end{eqnarray*}
for all $x\in\R^N\setminus B(0,1)$, where
\begin{equation}
f_\lambda(x):=|x|^{\frac{\alpha-\beta}{4}-\frac{N-1}{2}}\exp\left(-\int_1^{|x|}\frac{s^{\beta/2}}{(1+s^\alpha)^{1/2}}\,ds
-\frac{\lambda}{2}\int_1^{|x|}\frac{1}{s^{\beta/2}(1+s^\alpha)^{1/2}}\,ds\right).
\label{flambda}
\end{equation}
\end{proposition}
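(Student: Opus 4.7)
The plan is to exploit the radial symmetry of $\phi$ granted by Proposition \ref{thm-eigenfunct}, reduce the eigenvalue equation $\mathcal{A}\phi=\lambda_0\phi$ to a second-order ODE in $r=|x|$, and then compare $\phi$ with $f_0$ and $f_{2\lambda_0}$ via the maximum principle for the elliptic operator $\mathcal{L}:=\mathcal{A}-\lambda_0 I$. Writing
\begin{equation*}
\psi_\lambda'(r):=\frac{r^{\beta/2}}{(1+r^\alpha)^{1/2}}+\frac{\lambda}{2}\frac{1}{r^{\beta/2}(1+r^\alpha)^{1/2}},
\end{equation*}
so that $f_\lambda(x)=|x|^\gamma\exp(-\int_1^{|x|}\psi_\lambda'(s)\,ds)$ with $\gamma=\frac{\alpha-\beta}{4}-\frac{N-1}{2}$, the proof rests on the algebraic identity
\begin{equation*}
(1+r^\alpha)(\psi_\lambda'(r))^2=r^\beta+\lambda+\frac{\lambda^2}{4r^\beta},
\end{equation*}
which reproduces both the potential $V(r)=r^\beta$ and the spectral parameter $\lambda$ up to an error vanishing as $r\to\infty$ since $\beta>0$.

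A direct computation in radial coordinates then yields
\begin{equation*}
\frac{\mathcal{A}f_\lambda-\lambda_0 f_\lambda}{f_\lambda}(x)=(\lambda-\lambda_0)+\mathcal{R}_\lambda(|x|),
\end{equation*}
where $\mathcal{R}_\lambda(r)$ aggregates the contributions $\tfrac{\lambda^2}{4r^\beta}$, $-(1+r^\alpha)\psi_\lambda''$, $-(N-1+2\gamma)(1+r^\alpha)\psi_\lambda'/r$, and $\gamma(\gamma+N-2)(1+r^\alpha)/r^2$. The very choice of $\gamma$ is designed so that $N-1+2\gamma=\frac{\alpha-\beta}{2}$, which forces the cancellation of the would-be leading $r^{(\beta+\alpha)/2-1}$ terms between $(1+r^\alpha)\psi_\lambda''$ and $(N-1+2\gamma)(1+r^\alpha)\psi_\lambda'/r$, and the hypothesis $\alpha<2$ tames the $\gamma(\gamma+N-2)(1+r^\alpha)/r^2$ contribution. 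Choosing $R_0>1$ large enough so that $r^\beta+\lambda_0>0$ on $\{r\ge R_0\}$ and the remainder $\mathcal{R}_\lambda$ is dominated by $|\lambda_0|/2$ there, one obtains that $f_{2\lambda_0}$ is a strict supersolution of $\mathcal{L}$ on the exterior (since $2\lambda_0-\lambda_0=\lambda_0<0$), while $f_0$ plays the role of a strict subsolution (since $-\lambda_0>0$).

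With the sub- and super-solution properties in hand, the proof is concluded as follows. By Proposition \ref{thm-eigenfunct} the three functions $\phi,f_0,f_{2\lambda_0}$ are continuous, strictly positive on $\R^N\setminus B(0,1)$ and vanish at infinity (the divergence of $\int_1^\infty\psi_\lambda'(s)\,ds$, which is needed for the exponential factors to force decay, follows from $\beta>0$ combined with $\alpha<2$). Setting
\begin{equation*}
C_1:=\min_{\partial B_{R_0}}\frac{\phi}{f_0}>0,\qquad C_2:=\max_{\partial B_{R_0}}\frac{\phi}{f_{2\lambda_0}}<\infty,
\end{equation*}
the functions $u_-:=\phi-C_1f_0$ and $u_+:=C_2f_{2\lambda_0}-\phi$ satisfy $\mathcal{L}u_-\le 0$ and $\mathcal{L}u_+\le 0$ on $\{|x|>R_0\}$, are nonnegative on $\partial B_{R_0}$, and tend to $0$ at infinity; since the zero-order coefficient of $\mathcal{L}$ is $-(r^\beta+\lambda_0)<0$ there, the classical maximum principle on the unbounded exterior domain yields $u_\pm\ge 0$ in $\R^N\setminus\overline{B_{R_0}}$. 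The extension to $\R^N\setminus B(0,1)$ is immediate from the compactness of the annulus $\{1\le|x|\le R_0\}$ and the strict positivity and continuity of the three functions, after enlarging $C_1^{-1}$ and $C_2$ if needed. The main technical obstacle is the quantitative control of $\mathcal{R}_\lambda$ on $\{|x|\ge R_0\}$: the subleading terms in the expansions of $(1+r^\alpha)\psi_\lambda''$ and $(1+r^\alpha)\psi_\lambda'/r$ need to be tracked carefully to ensure that the sign of $\mathcal{L}f_\lambda$ is governed by $\lambda-\lambda_0$ for $r$ large, and in the worst regime $\beta>\alpha+2$ one may need to replace $f_0$ by a slight perturbation of the form $f_0(1-\eta(r))$ to guarantee the strict subsolution property.
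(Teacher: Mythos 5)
Your strategy---reduce to a radial ODE, build barriers $f_\lambda$ by an ansatz $|x|^\gamma e^{-\int\psi_\lambda'}$, and compare with $\phi$ via the maximum principle on the exterior of a ball---is exactly the one used in the paper, and the algebraic identity $(1+r^\alpha)(\psi_\lambda')^2 = r^\beta + \lambda + \tfrac{\lambda^2}{4r^\beta}$ and the resulting expression for $\mathcal{R}_\lambda$ are correct. So are the maximum-principle steps, granted the sub-/super-solution properties hold on $\{|x|\ge R_0\}$.

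However, the claim that with the pure power-law prefactor $\psi(r)=r^\gamma$ (with $\gamma=\tfrac{\alpha-\beta}{4}-\tfrac{N-1}{2}$) the remainder $\mathcal{R}_\lambda$ vanishes at infinity is \emph{false} in part of the admissible range, and this is not a technicality. Carrying out the cancellation you describe between $-(1+r^\alpha)\psi_\lambda''$ and $-(2\gamma+N-1)(1+r^\alpha)\psi_\lambda'/r$, the $\lambda$-independent piece of $\psi_\lambda'$, namely $r^{\beta/2}(1+r^\alpha)^{-1/2}$, yields exactly
\begin{equation*}
-(1+r^\alpha)\,\frac{d}{dr}\!\left(\frac{r^{\beta/2}}{(1+r^\alpha)^{1/2}}\right)-\frac{\alpha-\beta}{2}\cdot\frac{(1+r^\alpha)}{r}\cdot\frac{r^{\beta/2}}{(1+r^\alpha)^{1/2}}
=-\frac{\alpha}{2}\,\frac{r^{\beta/2-1}}{(1+r^\alpha)^{1/2}}\sim-\frac{\alpha}{2}\,r^{(\beta-\alpha)/2-1}.
\end{equation*}
For $\alpha>0$ and $\beta\ge\alpha+2$ this term is bounded away from zero, indeed unbounded if $\beta>\alpha+2$, and it has a \emph{fixed negative sign}. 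Thus $\mathcal{R}_0\to-\infty$, the inequality $\mathcal{R}_0\ge\lambda_0$ fails for large $r$, and $f_0$ is \emph{not} a subsolution of $\mathcal{A}-\lambda_0$; the lower bound $C_1 f_0\le\phi$ cannot be obtained this way. (The upper bound with $f_{2\lambda_0}$ survives, because the same negative term only reinforces the supersolution property.) The ``slight perturbation of the form $f_0(1-\eta(r))$'' you mention at the end is not a viable repair: a scalar perturbation of $f_0$ multiplies $\mathcal{R}_0 f_0$ by essentially the same factor and cannot change the sign of the unbounded error.

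The paper avoids this entirely by choosing the prefactor so that the offending terms cancel \emph{exactly}, not just to leading order: it solves $g_0''+2g_0'\psi'/\psi+(N-1)g_0'/r=0$, which gives $\psi(r)=r^{-(N-1)/2}\bigl((1+r^\alpha)/r^\beta\bigr)^{1/4}$ rather than $r^\gamma$. With this $\psi$ the remainder $h_\lambda$ genuinely tends to zero (only powers $r^{\alpha-2}$, $r^{(\alpha-\beta)/2-1}$, $r^{-\beta}$ survive, all negative for $\alpha<2$, $\beta>0$), so the sub- and super-solution inequalities hold for $|x|$ large in the full range. Since $\psi(r)/r^\gamma=\bigl((1+r^\alpha)/r^\alpha\bigr)^{1/4}\in[1,2^{1/4}]$ for $r\ge 1$, the resulting two-sided bounds transfer to the stated $f_\lambda$ at the cost of adjusting $C_1,C_2$. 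You should replace your ansatz by this $\psi$ (or else explicitly prove the equivalence and redo the remainder computation with it); as written, the proof of the lower bound is incomplete for $\alpha>0$, $\beta\ge\alpha+2$.
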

\begin{proof}
Let us set $f_{\lambda}(x)=\psi(|x|)e^{-g_{\lambda}(|x|)}$, where $\lambda$ is a real constant,
\begin{eqnarray*}
g'_{\lambda}(r)=\frac{r^{\beta/2}}{(1+r^{\alpha})^{1/2}}+\frac{\lambda}{2r^{\beta/2}(1+r^{\alpha})^{1/2}},
\qquad\;\,r>0,
\end{eqnarray*}
and $\psi$ is a positive and sufficiently smooth function to be chosen later on.

A straightforward computation shows that
\begin{align*}
\Delta f_{\lambda}(x)
=&\frac{\psi''(|x|)}{\psi(|x|)}f_{\lambda}(x)-\left (g_{\lambda}''(|x|)+2g_{\lambda}'(|x|)\frac{\psi'(|x|)}{\psi(|x|)}
+\frac{N-1}{|x|}g_{\lambda}'(|x|)\right )f_{\lambda}(x)
\notag\\
&+(g_{\lambda}'(|x|))^2f_{\lambda}(x)+\frac{N-1}{|x|}\cdot\frac{\psi'(|x|)}{\psi(|x|)}f_{\lambda}(x),
\end{align*}
for any $x\in\R^N\setminus\{0\}$.
Let us determine $\psi$ in such a way that
\begin{align*}
g_0''(r)+2g'_0(r)\frac{\psi'(r)}{\psi(r)}+\frac{N-1}{r}g_0'(r)=0,\qquad\;\,r>0.
\end{align*}
We can take
\begin{eqnarray*}
\psi(r)=r^{-\frac{N-1}{2}}\left (\frac{1+r^{\alpha}}{r^{\beta}}\right )^{\frac{1}{4}},\qquad\;\,r>0.
\end{eqnarray*}
With this choice of $\psi$ we get
\begin{align*}
{\mathcal A}f_{\lambda}(x)-\lambda f_{\lambda}(x)
=&(1+|x|^{\alpha})\frac{\psi''(|x|)}{\psi(|x|)}f_{\lambda}(x)
-\lambda\frac{(1+|x|^{\alpha})^{1/2}}{|x|^{\beta/2}}\cdot\frac{\psi'(|x|)}{\psi(|x|)}f_{\lambda}(x)\notag\\
&-\frac{\lambda}{2}(1+|x|^{\alpha})
\left (\frac{d}{dr}\frac{1}{r^{\beta/2}(1+r^{\alpha})^{1/2}}\right )_{|r=|x|}f_{\lambda}(x)\notag\\
&+\frac{\lambda^2}{4|x|^{\beta}}f_{\lambda}(|x|)+\frac{N-1}{|x|}(1+|x|^{\alpha})\frac{\psi'(|x|)}{\psi(|x|)}f_{\lambda}(x)\notag\\
&-\lambda\frac{N-1}{2|x|}\cdot\frac{(1+|x|^{\alpha})^{1/2}}{|x|^{\beta/2}}f_{\lambda}(x)\notag\\
=:&h_{\lambda}(|x|)f_{\lambda}(x),
\end{align*}
for any $x\in\R^N\setminus\{0\}$.
Since $\alpha\in [0,2)$ and $\beta>0$, $h_{\lambda}(|x|)$ tends to $0$ as $|x|\to\infty$.
Therefore,
\begin{equation}
{\mathcal A}f_{\lambda}(x)-\lambda f_{\lambda}(x)=o(1)f_{\lambda}(x),
\label{star*}
\end{equation}
as $|x|\to \infty$. We can thus apply the same arguments as in Davies book \cite{davies}. More precisely, if $\phi$ is a positive eigenfunction of operator $A_p$ associated to the largest
eigenvalue $\lambda_0<0$, then we have ${\mathcal A}\phi-\lambda_0\phi=0$ in $\R^N$, $\phi\in C_b(\R^N)\cap C^2(\R^N)$ and it
 vanishes as $|x|\to\infty$ (see Proposition \ref{thm-eigenfunct}).
Now, $o(1)>\lambda_0$
if $|x|$ is sufficiently large (let us say $|x|\ge R$). Since $f_0>0$, it holds that
\begin{align*}
0={\mathcal A}f_0(x)-o(1)f_0(x)\le {\mathcal A}f_0(x)-\lambda_0f_0(x),
\end{align*}
for any $x\in\R^N\setminus B_R$, which yields
${\mathcal A}(f_0-\phi)-\lambda_0(f_0-\phi)\ge 0$ in $\R^N\setminus B_R$. Up to replacing $R$ with a larger value if needed, we
can assume that $|x|^{\beta}+\lambda_0>0$ is positive for any $x\in\R^N\setminus B_R$.
Since both $f_0$ and $\phi$ tend to $0$ as $|x|\to\infty$,
 and $f_0^{-1}\phi \ge C_1$ on $\partial B_R$, by compactness, we get, by the maximum principle,
$C_1f_0-\phi\le 0$ in $\R^N\setminus B_R$.

Analogously, by \eqref{star*} it follows that ${\mathcal A}f_{2\lambda_0}-(2\lambda_0+o(1))f_{2\lambda_0}=0$
in $\R^N\setminus\{0\}$. Now, up to replacing $R$ with a larger value, we can assume that
\begin{eqnarray*}
0<|x|^{\beta}+2\lambda_0+o(1)<|x|^{\beta}+\lambda_0,\qquad\;\,x\in\R^N\setminus B_R.
\end{eqnarray*}
Hence, ${\mathcal A}f_{2\lambda_0}-\lambda_0f_{2\lambda_0}\le 0$ in $\R^N\setminus B_R$.
This implies that
${\mathcal A}(f_{2\lambda_0}-\phi)-\lambda_0(f_{2\lambda_0}-\phi)\le 0$ in $\R^N\setminus B_R$ and
as above we can conclude that
$C_2f_{2\lambda_0}-\phi\ge 0$ in $\R^N\setminus B_R$
for some constant $C_2>0$.
\end{proof}

Note that the function $f_{2\lambda_0}/f_0$ is bounded in a neighborhood of $\infty$ if and only $\beta+\alpha>2$. In such a situation one obtains the following result which can be deduced also from \cite[Chpt. 6, Thm. 2.1]{olver}.
\begin{corollary}\label{estim-eigenfunct0}
If $2<\alpha +\beta <2+\beta$ then there exist constants $C_1,\,C_2>0$ such that
\begin{eqnarray*}
C_1f_0(x)\le \phi(x)\le C_2f_0(x),\qquad\;\,x\in\R^N\setminus B(0,1).
\end{eqnarray*}
\end{corollary}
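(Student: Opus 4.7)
The plan is to derive the corollary directly from Proposition \ref{estim-eigenfunct} by observing that under the additional hypothesis $\alpha+\beta>2$ the ratio $f_{2\lambda_0}(x)/f_0(x)$ is uniformly bounded on $\R^N\setminus B(0,1)$. Indeed, inspecting the definition \eqref{flambda}, the prefactor $|x|^{(\alpha-\beta)/4-(N-1)/2}$ and the first integral $\int_1^{|x|}s^{\beta/2}(1+s^\alpha)^{-1/2}\,ds$ do not depend on $\lambda$, so the ratio collapses to
\begin{equation*}
\frac{f_{2\lambda_0}(x)}{f_0(x)}=\exp\left(-\lambda_0\int_1^{|x|}\frac{ds}{s^{\beta/2}(1+s^\alpha)^{1/2}}\right).
\end{equation*}

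The key step is to verify that the integral inside the exponential converges as $|x|\to\infty$. The integrand is positive and continuous on $[1,\infty)$, and as $s\to\infty$ behaves like $s^{-(\alpha+\beta)/2}$ since $(1+s^\alpha)^{1/2}\sim s^{\alpha/2}$. The hypothesis $\alpha+\beta>2$ is precisely what ensures $\int_1^\infty s^{-(\alpha+\beta)/2}\,ds<\infty$, so the full integral converges to some finite positive value $L$. The upper bound $\alpha+\beta<2+\beta$ simply restates $\alpha<2$ and plays no role here beyond keeping us in the regime already covered by Proposition \ref{estim-eigenfunct}.

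Since $\lambda_0<0$ and $\int_1^{|x|}s^{-\beta/2}(1+s^\alpha)^{-1/2}\,ds$ is nondecreasing in $|x|$, the exponent $-\lambda_0\int_1^{|x|}(\cdots)\,ds$ is nonnegative and bounded above by $-\lambda_0 L$. Therefore
\begin{equation*}
1\le \frac{f_{2\lambda_0}(x)}{f_0(x)}\le e^{-\lambda_0 L},\qquad x\in\R^N\setminus B(0,1).
\end{equation*}
Combining this uniform bound with the two-sided estimate from Proposition \ref{estim-eigenfunct}, we obtain
\begin{equation*}
C_1f_0(x)\le \phi(x)\le C_2f_{2\lambda_0}(x)\le C_2e^{-\lambda_0 L}f_0(x),
\end{equation*}
for $x\in\R^N\setminus B(0,1)$, which is the claim with the lower constant unchanged and the upper constant redefined. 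There is no substantial obstacle: the corollary is genuinely a direct consequence, and the only point requiring care is the asymptotic comparison of the integrand with $s^{-(\alpha+\beta)/2}$, which is where the sharp condition $\alpha+\beta>2$ enters.
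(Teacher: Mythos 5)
Your proof is correct and follows essentially the same route as the paper: the paper's lead-in to this corollary is precisely the remark that $f_{2\lambda_0}/f_0$ is bounded near infinity if and only if $\alpha+\beta>2$, which you justify by the same convergence analysis of $\int_1^\infty s^{-\beta/2}(1+s^\alpha)^{-1/2}\,ds$. The only difference is that you spell out the elementary details the paper leaves implicit, and you correctly note that $\alpha+\beta<2+\beta$ merely restates $\alpha<2$.
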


Let us now introduce on $L^2_\mu$ the bilinear form
\begin{eqnarray*}
a_\mu(u,v)=\int_{\R^N}\nabla u\cdot \nabla \overline{v}\,dx+\int_{\R^N}Vu\overline{v}\,d\mu,\qquad\;\, u,v\in D(a_\mu),
\end{eqnarray*}
where $d\mu(x)=(1+|x|^\alpha)^{-1}dx$ and $D(a_\mu)=\{u\in L^2_\mu: V^{1/2}u\in L^2_{\mu},\, \nabla u\in (L^2(\R^N))^N\}$.
$D(a_{\mu})$ is a Hilbert space when endowed with the inner product
\begin{eqnarray*}
\langle u,v\rangle_{D(a_{\mu})}=\int_{\R^N}(1+V)u\overline{v}\,d\mu +\int_{\R^N}\nabla u\cdot \nabla \overline{v}\,dx.
\end{eqnarray*}
Since $a_{\mu}$ is a closed, symmetric and accretive form, to $a_\mu$ we associate the self-adjoint operator $A_\mu$ defined by
\begin{eqnarray*}
\left\{
\begin{array}{l}
D(A_\mu)=\displaystyle\left\{u\in L^2_\mu: \exists g\in L^2_\mu \mbox{ such that } a_\mu(u,v)=-\int_{\R^N}g\overline{v}\,d\mu,\,\forall v\in D(a_\mu)\right\},\\[3mm]
A_\mu u=g,
\end{array}
\right.
\end{eqnarray*}
see \cite[Prop. 1.24]{ouhabaz}.
By general results on positive self-adjoint operators induced by nonnegative quadratic forms
in Hilbert spaces (see e.g., \cite[Prop. 1.51, Thms. 1.52, 2.6, 2.13]{ouhabaz})
$A_\mu$ generates a positive analytic semigroup $e^{tA_\mu}$ in $L^2_\mu$.
We denote by $k_{\mu}$ the heat kernel associated to $A_{\mu}$, i.e.,
\begin{eqnarray*}
e^{tA_{\mu}}f=\int_{\R^N}k_{\mu}(t,\cdot,y)f(y)d\mu(y),\qquad\;\,t>0,\;\,f\in L^2_{\mu}.
\end{eqnarray*}

\begin{lemma}
\label{lemma-3.4}
The following properties are satisfied.
\begin{enumerate}[\rm (i)]
\item
$D(A_\mu) = \{u\in D(a_\mu)\cap W^{2,2}_{\rm loc}(\R^N): \mathcal{A}u\in L^2_\mu \}$ and $A_{\mu}u={\mathcal A}u$ for any
$u\in D(A_{\mu})$;
\item
$e^{tA_\mu}f=T_p(t)f$ for any $t\ge 0$, any $p\in (1,\infty)$ and any $f\in L^p(\R^N)\cap L^2_\mu$.
\end{enumerate}
As a byproduct, it follows that
\begin{equation}\label{k-mu}
k_\mu(t,x,y)=(1+|y|^\alpha)k(t,x,y),\qquad\;\, t>0,\;\,x,y\in \R^N.
\end{equation}
\end{lemma}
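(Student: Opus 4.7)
The plan is to establish part (i) by directly verifying the two inclusions via the standard form-vs-operator computation, then to deduce part (ii) from the fact that the resolvents of $A_\mu$ and $A_p$ coincide on $C_c^{\infty}(\R^N)$; the kernel identity \eqref{k-mu} will then follow from equating two integral representations of the same semigroup.

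For part (i) the starting point is the calculation that, whenever $u\in W^{2,2}_{\rm loc}(\R^N)$ and $v\in C_c^{\infty}(\R^N)$, the change of measure $dx=a\,d\mu$ together with an integration by parts yields
\begin{align*}
a_\mu(u,v)=\int_{\R^N}\nabla u\cdot\nabla\bar v\,dx+\int_{\R^N}Vu\bar v\,d\mu=-\int_{\R^N}\mathcal{A}u\,\bar v\,d\mu.
\end{align*}
For the inclusion ``$\supseteq$'', I would take $u$ in the set on the right of (i) and extend the above identity from $v\in C_c^{\infty}(\R^N)$ to all $v\in D(a_\mu)$ by density; continuity of both sides in $v$ (the right-hand side being bounded by $\|\mathcal{A}u\|_{L^2_\mu}\|v\|_{L^2_\mu}$) then gives $u\in D(A_\mu)$ with $A_\mu u=\mathcal{A}u$. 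For the opposite inclusion, I would take $u\in D(A_\mu)$, test the defining identity $a_\mu(u,v)=-\int_{\R^N}A_\mu u\,\bar v\,d\mu$ against $v\in C_c^{\infty}(\R^N)$, and read off the distributional identity $\mathcal{A}u=A_\mu u$; rewriting this as $\Delta u=a^{-1}(Vu+A_\mu u)$ and noting that the right-hand side lies in $L^2_{\rm loc}(\R^N)$ (because $a^{-1}$ and $V$ are locally bounded and $L^2_\mu$ embeds in $L^2_{\rm loc}(\R^N)$), standard interior elliptic regularity gives $u\in W^{2,2}_{\rm loc}(\R^N)$.

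For part (ii), I would fix $f\in C_c^{\infty}(\R^N)$ and $\lambda$ larger than the sectoriality threshold provided by Theorem \ref{prop-2}, and set $u:=R(\lambda,A_2)f$. From $u\in D(A_2)\subset W^{2,2}(\R^N)$ with $Vu\in L^2(\R^N)$, and the trivial embedding $L^2(\R^N)\hookrightarrow L^2_\mu$ (since $(1+|x|^\alpha)^{-1}\le 1$), the Cauchy-Schwarz bound $\int V|u|^2\,dx\le \|Vu\|_{L^2(\R^N)}\|u\|_{L^2(\R^N)}$ yields $V^{1/2}u\in L^2_\mu$, so $u\in D(a_\mu)$, while $\mathcal{A}u=\lambda u-f\in L^2(\R^N)\subset L^2_\mu$. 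By part (i), $u\in D(A_\mu)$ with $(\lambda-A_\mu)u=f$, so $R(\lambda,A_\mu)f=R(\lambda,A_2)f=R(\lambda,A_p)f$ (using the consistency of the resolvents $(R(\lambda,A_p))_{p\in(1,\infty)}$ on $C_c^{\infty}(\R^N)$ already established in Step 4 of the proof of Theorem \ref{prop-2}). The Post-Widder exponential formula then upgrades this to $T_p(t)f=e^{tA_\mu}f$ for all $f\in C_c^{\infty}(\R^N)$, and simultaneous $L^p$- and $L^2_\mu$-approximation via cutoff and mollification extends the identity to every $f\in L^p(\R^N)\cap L^2_\mu$. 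Identity \eqref{k-mu} then follows by equating $\int_{\R^N}k(t,x,y)f(y)\,dy=\int_{\R^N}k_\mu(t,x,y)(1+|y|^\alpha)^{-1}f(y)\,dy$ and letting $f$ vary in $C_c^{\infty}(\R^N)$.

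The main technical obstacle is the density of $C_c^{\infty}(\R^N)$ in $(D(a_\mu),\langle\cdot,\cdot\rangle_{D(a_\mu)})$ invoked in the ``$\supseteq$'' direction of part (i), since $L^2_\mu$-functions may grow at infinity. I would handle it by the cut-off $\eta_n(x)=\eta(|x|/R_n)$ with $R_n\to\infty$: the only nontrivial term is $\int|u|^2|\nabla\eta_n|^2\,dx$, which is controlled by $CR_n^{\alpha-2}\int_{|x|\ge R_n}|u|^2\,d\mu$ and vanishes as $R_n\to\infty$ thanks to the hypothesis $\alpha\le 2$ and $u\in L^2_\mu$; the remaining compactly supported functions are then smoothed by standard mollification, the weighted norms being harmless on compact sets.
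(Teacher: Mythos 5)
Your handling of part (i) is essentially the paper's own: both directions are proved by the integration-by-parts calculation, with local elliptic regularity giving the $W^{2,2}_{\rm loc}$ membership in the ``$\subseteq$'' direction and the density of $C_c^{\infty}(\R^N)$ in $D(a_\mu)$ filling the ``$\supseteq$'' direction. The paper dispatches that density claim by pointing to the cut-off/mollification scheme of Proposition~\ref{lemma-1}; you spell out the same scheme directly, and your computation $\int|u|^2|\nabla\eta_n|^2\,dx\lesssim R_n^{\alpha-2}\int_{|x|\ge R_n}|u|^2\,d\mu\to 0$ is correct for $\alpha\in[0,2]$, so there is no gap there.

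For part (ii) you genuinely diverge from the paper. The paper observes that $D(A_2)\hookrightarrow D(A_\mu)$ and then invokes uniqueness of solutions of the abstract Cauchy problem $v'={\mathcal A}v$, $v(0)=f$, in the larger space $L^2_\mu$ to conclude $T_2(t)f=e^{tA_\mu}f$ on $L^2(\R^N)$; you instead show $R(\lambda,A_\mu)$ and $R(\lambda,A_2)$ act identically on $L^2(\R^N)$ via part (i) and then upgrade to semigroups by the Post--Widder formula. The resolvent route is arguably a little more transparent, since the well-posedness/uniqueness input of the paper's argument is replaced by the completely algebraic identity $(\lambda-A_\mu)u=f$ coming directly from (i); one small caution is that the Post--Widder iteration must be carried out for a class stable under the resolvent, so it is cleaner to state and use the identity $R(\lambda,A_\mu)f=R(\lambda,A_2)f$ for all $f\in L^2(\R^N)$ (which your argument in fact proves, not just on $C_c^\infty$), and only afterwards pass to general $p$ by the consistency of $(T_p(t))_{t\ge 0}$. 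Your final cutoff/mollification step to extend from $L^p\cap L^2$ to $L^p\cap L^2_\mu$ and the derivation of \eqref{k-mu} from the coincidence of the two integral representations are both fine and match what the paper implicitly does.
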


\begin{proof}
(i) We begin by proving the inclusion ``$\subset$''. Fix $u\in D(A_{\mu})\subset W^{1,2}_{\rm loc}(\R^N)$. Then,
\begin{align*}
\int_{\R^N}\nabla u\cdot\nabla\overline{v}dx=&-\int_{\R^N}\left (A_{\mu}u+Vu\right )\overline{v}d\mu,
\end{align*}
for any $v\in C^{\infty}_c(\R^N)$. Since $g=A_{\mu}u+Vu\in L^2_{\rm loc}(\mathbb R^N)$, from the previous formula we deduce that
\begin{eqnarray*}
\bigg |\int_{\mathbb R^N}\nabla u\cdot\nabla\overline{v}\,dx\bigg |\le \|g\|_{L^2(B_R)}\|v\|_{L^2(\mathbb R^N)},
\end{eqnarray*}
for any $v\in C^{\infty}_c(B_R)$ and any $R>0$. By density this estimate can be extended to any $v\in W^{1,2}_0(B_R)$
for any $R>0$ and, using a standard argument, it is immediate to check that
$\nabla u\in (W^{1,2}_{\rm loc}(\R^N))^N$.
Hence, $u\in W^{2,2}_{\rm loc}(\R^N)$.

Finally, integrating by parts we conclude that $A_{\mu}u={\mathcal A}u$. The inclusion ``$\subset$'' follows at once.

Let us now prove the inclusion ``$\supset$''. For this purpose, we fix $u\in D(a_{\mu})\cap W^{2,2}_{\rm loc}(\R^N)$ such that
$f:={\mathcal A}u\in L^2_{\mu}$, and $\varphi\in C^1_c(\R^N)$. Integrating by parts we get
\begin{align*}
\int_{\R^N}f\overline{\varphi} d\mu=&\int_{\R^N}(\Delta u-a^{-1}Vu)\overline{\varphi} dx\notag\\
=&-\int_{\R^N}\nabla u\cdot\nabla\overline{\varphi} dx-\int_{\R^N}Vu\overline{\varphi} d\mu\notag\\
=&-a_{\mu}(u,\varphi).
\end{align*}
To conclude that $u\in D(A_{\mu})$ we need to show that the previous equality can be extended to any $\varphi\in D(a_{\mu})$. But this follows immediately from the density of $C^1_c(\R^N)$ in $D(a_{\mu})$, which can be proved arguing as in the proof of Proposition
\ref{lemma-1}.
%
The inclusion ``$\supset$'' follows.

(ii) Since $L^2(\R^N)\subset L^2_{\mu}$, with a continuous embedding, by Proposition \ref{prop-2.8} it follows that
$D(A_2)\subset D(A_{\mu})$ with a continuous embedding (when the two previous spaces are endowed with the graph norms).
Hence, for any $f\in L^2(\R^N)$, both the functions $t\mapsto T_2(t)f$ and $t\mapsto e^{tA_{\mu}}f$
belong to $C^1((0,+\infty);L^2_{\mu})\cap C([0,+\infty);L^2_{\mu})\cap C((0,+\infty);D(A_{\mu}))$ and
solve the Cauchy problem
\begin{eqnarray*}
\left\{
\begin{array}{ll}
v'(t)={\mathcal A}v(t), &t\ge 0,\\[1mm]
v(0)=f.
\end{array}
\right.
\end{eqnarray*}
Since the previous problem admits a unique solution with the claimed regularity properties, $T_2(t)f=e^{tA_{\mu}}f$ for
any $t>0$. Recalling that $T_2(t)$ and $T_p(t)$ agree on $L^p(\R^N)\cap L^2(\R^N)$ for
any $t>0$, we conclude that, for any $t>0$, $e^{tA_{\mu}}$ and $T_p(t)$ coincide on $L^p(\R^N)\cap L^2_{\mu}$.
Formula \eqref{k-mu} now follows immediately.
\end{proof}

Let us now give the first application of Proposition \ref{estim-eigenfunct}.
\begin{proposition}\label{on-diagonal}
If $\alpha \in [0,2)$ and $\beta >0$, then
\begin{eqnarray*}
k(t,x,x)\ge Me^{\lambda_0 t}(f_0(x))^2(1+|x|^\alpha)^{-1},\qquad\;\, t>0,
\end{eqnarray*}
for all $x\in\R^N\setminus B(0,1)$ and some constant $M>0$. Here, $f_0$ is given by \eqref{flambda}.
\end{proposition}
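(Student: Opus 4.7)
The plan is to exploit the self-adjointness of the $L^2_\mu$-realization $A_\mu$ combined with the positivity of the ground state $\phi$ via a Cauchy--Schwarz argument, then convert the result from $k_\mu$ back to $k$ using Lemma \ref{lemma-3.4}.

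First I would observe that, since $A_\mu$ is associated to the closed, symmetric, accretive form $a_\mu$, it is self-adjoint in $L^2_\mu$, so its kernel $k_\mu$ is symmetric: $k_\mu(t,x,y)=k_\mu(t,y,x)$. Combined with the semigroup law this gives the standard identity
\begin{equation*}
k_\mu(2t,x,x)=\int_{\R^N}k_\mu(t,x,y)^2\,d\mu(y),\qquad t>0,\ x\in\R^N.
\end{equation*}
Next, by Proposition \ref{thm-eigenfunct} and Lemma \ref{lemma-3.4}(ii), the positive eigenfunction $\phi$ of $A_p$ is also an eigenfunction of $A_\mu$, so $e^{tA_\mu}\phi=e^{\lambda_0 t}\phi$; writing this through the kernel yields
\begin{equation*}
e^{\lambda_0 t}\phi(x)=\int_{\R^N}k_\mu(t,x,y)\phi(y)\,d\mu(y).
\end{equation*}

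Applying Cauchy--Schwarz in $L^2_\mu$ to the last integral and using the previous identity, I would deduce
\begin{equation*}
e^{2\lambda_0 t}\phi(x)^2\le\|\phi\|_{L^2_\mu}^2\,k_\mu(2t,x,x).
\end{equation*}
Here $\|\phi\|_{L^2_\mu}$ is finite: indeed $\phi\in C_b(\R^N)\cap L^p(\R^N)$ (e.g.\ take $p=2$), and since $d\mu\le dx$ we have $\|\phi\|_{L^2_\mu}\le\|\phi\|_{L^2(\R^N)}<\infty$. Using \eqref{k-mu} to rewrite $k_\mu(2t,x,x)=(1+|x|^\alpha)\,k(2t,x,x)$, and invoking the lower bound $\phi(x)\ge C_1 f_0(x)$ for $|x|>1$ from Proposition \ref{estim-eigenfunct}, I obtain
\begin{equation*}
k(2t,x,x)\ge \frac{C_1^2}{\|\phi\|_{L^2_\mu}^2}\,\frac{e^{2\lambda_0 t}\,f_0(x)^2}{1+|x|^\alpha},\qquad|x|>1,\ t>0.
\end{equation*}
Renaming $2t$ as $t$ yields the desired bound with $M=C_1^2/\|\phi\|_{L^2_\mu}^2$.

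There is no real obstacle here: symmetry of $k_\mu$, the Cauchy--Schwarz inequality, and the pointwise lower bound on $\phi$ have all been set up in the preceding propositions. The only point one must be careful about is confirming that the operator induced by $a_\mu$ really is self-adjoint on $L^2_\mu$ (so that $k_\mu$ is symmetric) and that the eigenfunction $\phi$ produced in Proposition \ref{thm-eigenfunct} coincides with an $L^2_\mu$-eigenfunction of $A_\mu$ with the same eigenvalue; both follow from Lemma \ref{lemma-3.4}. The remainder is a one-line Cauchy--Schwarz computation combined with the eigenfunction estimate of Proposition \ref{estim-eigenfunct}.
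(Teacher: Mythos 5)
Your proposal is correct and follows essentially the same route as the paper: the Chapman--Kolmogorov identity for the symmetric kernel $k_\mu$, Cauchy--Schwarz applied to $e^{tA_\mu}\phi=e^{\lambda_0 t}\phi$, the relation $k_\mu=(1+|y|^\alpha)k$ from Lemma~\ref{lemma-3.4}, and the lower bound $\phi\ge C_1f_0$ from Proposition~\ref{estim-eigenfunct}. The only cosmetic difference is that you retain the factor $\|\phi\|_{L^2_\mu}$ explicitly (and work with $2t$ before renaming), whereas the paper normalizes $\|\phi\|_{L^2(\R^N)}=1$ so that, since $d\mu\le dx$, the same factor is bounded by $1$ and silently absorbed.
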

\begin{proof}
From the semigroup law and the symmetry of $k_\mu(t,\cdot,\cdot)$ for any $t>0$, we deduce that
\begin{equation}
k_\mu(t,x,x)=\int_{\R^N}k_\mu(t/2,x,y)^2\,d\mu(y),\qquad\;\, t>0,\;\,x\in \R^N.
\label{kmu-0}
\end{equation}
Indeed, for any real valued functions $\varphi,\psi\in C^{\infty}_c(\R^N)$ it holds that
\begin{align*}
\int_{\R^N}\psi e^{tA_{\mu}}\varphi d\mu=&\int_{\R^N}\psi e^{\frac{t}{2}A_{\mu}}e^{\frac{t}{2}A_{\mu}}\varphi d\mu
=\int_{\R^N}e^{\frac{t}{2}A_{\mu}}\psi e^{\frac{t}{2}A_{\mu}}\varphi d\mu
\end{align*}
or, equivalently,
\begin{align*}
&\int_{\R^N}\psi(x)d\mu(x)\int_{\R^N}k_{\mu}(t,x,y)\varphi(y)d\mu(y)\\
=&
\int_{\R^N}d\mu(x)\int_{\R^N}\psi(y)d\mu(y)\int_{\R^N}k_{\mu}(t/2,x,y)k_{\mu}(t/2,x,z)\varphi(z)d\mu(z)\\
=&
\int_{\R^N}\psi(y)d\mu(y)\int_{\R^N}\varphi(z) d\mu(z)\int_{\R^N}k_{\mu}(t/2,x,y)k_{\mu}(t/2,x,z)d\mu(x).
\end{align*}
The arbitrariness of $\varphi,\psi\in C^{\infty}_c(\R^N)$ imply that
\begin{eqnarray*}
k_{\mu}(t,x,y)=\int_{\R^N}k_{\mu}(t/2,z,x)k_{\mu}(t/2,z,y)d\mu(z),
\end{eqnarray*}
and, then, the symmetry of $k_{\mu}(t,\cdot,\cdot)$ leads to \eqref{kmu-0}.

Let us denote by $\phi$ the normalized eigenfunction of ${\mathcal A}$ (i.e., $\|\phi \|_{L^2}=1$) corresponding to the eigenvalue $\lambda_0$.
Using H\"older inequality we get
\begin{align*}
e^{\lambda_0\frac{t}{2}}\phi(x)=& T_2(t/2)\phi(x)\notag\\
=&\int_{\R^N}k_\mu(t/2,x,y)\phi(y)\,d\mu(y)\notag\\
\le&\left(\int_{\R^N}k_\mu(t/2,x,y)^2\,d\mu(y)\right)^{\frac{1}{2}}\notag\\
=& k_\mu(t,x,x)^{\frac{1}{2}},
\end{align*}
for any  $t>0$ and any $x\in \R^N$.
The assertion now follows from Proposition \ref{estim-eigenfunct}.
\end{proof}

We now state the main result of this section.

\begin{theorem}\label{kernel-estimates}
If $N>2,\,\alpha \in [0,2)$ and $\beta >2$, then
\begin{eqnarray*}
0<k(t,x,y)\le \frac{Ke^{\lambda_0 t}e^{ct^{-b}}f_0(x)f_0(y)}{1+|y|^\alpha},\qquad\;\,t>0,\;\,x,y\in\R^N\setminus B(0,1),
\end{eqnarray*}
where $K,\,c$ are positive constants, $b>\frac{\beta +2}{\beta -2}$ and
$f_0$ is given by \eqref{flambda}.
\end{theorem}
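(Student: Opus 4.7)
The plan is to establish intrinsic ultracontractivity for the symmetric semigroup $(e^{tA_\mu})_{t\ge 0}$, namely
\[
k_\mu(t,x,y)\le K e^{\lambda_0 t}e^{ct^{-b}}\phi(x)\phi(y),\qquad t>0,\;x,y\in\R^N,
\]
where $\phi$ is the positive eigenfunction of $A_\mu$ given by Proposition~\ref{thm-eigenfunct}. Combining this with the upper bound $\phi(x)\le C f_0(x)$, which follows from Proposition~\ref{estim-eigenfunct} together with the fact that $\beta>2$ implies $\alpha+\beta>2$ and hence boundedness of $f_{2\lambda_0}/f_0$ near infinity (cf.\ the remark preceding Corollary~\ref{estim-eigenfunct0}), and with the identity $k_\mu(t,x,y)=(1+|y|^\alpha)k(t,x,y)$ from Lemma~\ref{lemma-3.4}, will then yield the theorem.

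\textbf{Step 1: Doob transform.} On the finite measure space $(\R^N,\phi^2 d\mu)$ I define the ground-state-transformed semigroup $\tilde T_t u:=e^{-\lambda_0 t}\phi^{-1} e^{tA_\mu}(\phi u)$. A direct calculation using $A_\mu\phi=\lambda_0\phi$ shows that $(\tilde T_t)$ is a symmetric Markov semigroup ($\tilde T_t 1=1$) with Dirichlet form
\[
\tilde a(u,u)=\int_{\R^N}\phi^2|\nabla u|^2\,dx,\qquad u\in C_c^\infty(\R^N),
\]
and its kernel $\tilde k$ satisfies $\tilde k(t,x,y)=e^{-\lambda_0 t}\phi(x)^{-1}\phi(y)^{-1}k_\mu(t,x,y)$.

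\textbf{Step 2: sharp ultracontractivity and conclusion.} The core analytic estimate to prove is
\[
\|\tilde T_t\|_{L^2(\phi^2 d\mu)\to L^\infty}\le C e^{ct^{-b}},\qquad 0<t\le 1,
\]
for some $b>(\beta+2)/(\beta-2)$. I would derive it from a super-Poincar\'e-type inequality
\[
\int u^2\phi^2\,d\mu \le s\,\tilde a(u,u)+\beta(s)\bigg(\int|u|\,\phi^2\,d\mu\bigg)^{\!\!2},\qquad s>0,
\]
with $\beta(s)\sim e^{c's^{-b}}$, whose equivalence with the above ultracontractivity bound is classical (Wang--R\"ockner; see also \cite[Thm.~2.4.6]{davies}). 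To establish it, I would exploit the radial symmetry of $\phi$ to reduce to a one-dimensional weighted problem with density behaving like $r^{(\alpha-\beta)/2-1}(1+r^\alpha)^{-1}e^{-2\psi(r)}\,dr$, where $\psi(r)=\int_1^r s^{\beta/2}(1+s^\alpha)^{-1/2}ds$, and then optimize as in \cite[Sect.~4.5]{davies}. The standard identity $\tilde T_t=\tilde T_{t/2}\tilde T_{t/2}$ together with symmetry promotes the ultracontractivity to the pointwise bound $\tilde k(t,x,y)\le C e^{ct^{-b}}$ for $0<t\le 1$ (for $t>1$ the same shape of bound persists using $\tilde T_t 1=1$ and the semigroup law). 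Unwinding the transform via $k_\mu=e^{\lambda_0 t}\phi(x)\phi(y)\tilde k$, then invoking $\phi\le Cf_0$ and Lemma~\ref{lemma-3.4}, produces the stated kernel estimate.

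\textbf{Main obstacle.} The principal difficulty is the sharp ultracontractivity in Step~2: the unbounded diffusion $a(x)=1+|x|^\alpha$, absent in the pure Schr\"odinger model $\Delta-|x|^\beta$ treated in Davies, complicates the functional inequality that extracts the index $b$. However, the hypothesis $\alpha<2<\beta$ ensures that the growth of $a$ does not alter the leading asymptotics of the Agmon weight $\psi(r)$ at infinity, so Davies' one-dimensional optimization procedure should adapt with only cosmetic modifications, the value $b>(\beta+2)/(\beta-2)$ being driven entirely by the potential exponent $\beta$.
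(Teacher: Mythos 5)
Your high-level architecture (Doob/ground-state transform, intrinsic ultracontractivity of the transformed Markov semigroup, Chapman--Kolmogorov for $t>1$, then unwind via $\phi\le Cf_0$ and $k_\mu=(1+|y|^\alpha)k$) is exactly the one underlying the paper's Step~1/Step~2, and your explicit Doob transform with Dirichlet form $\int\phi^2|\nabla u|^2\,dx$ on $L^2(\phi^2d\mu)$ is a clean equivalent reformulation of the paper's bound on $k_\mu(t,x,y)/(\phi(x)\phi(y))$. The treatment of $t>1$ via conservativity and the semigroup law is also a valid substitute for the paper's use of the spectral estimate \eqref{spectr-behav}.

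The genuine gap is in Step~2, which you yourself flag as the main obstacle: the claimed super-Poincar\'e inequality is asserted, not proved, and the justification is not quite right. Two concrete issues. First, the assertion that ``the growth of $a$ does not alter the leading asymptotics of the Agmon weight'' is false: for $\alpha>0$ one has $\int_1^r s^{\beta/2}(1+s^\alpha)^{-1/2}\,ds\sim c\,r^{(\beta-\alpha+2)/2}$, which differs from Davies' $r^{(\beta+2)/2}$. (The exponent $b>(\beta+2)/(\beta-2)$ is nevertheless correct, but for a different reason: it comes from the potential perturbation bound, not from the Agmon weight.) Second, and more seriously, reducing the required functional inequality to a one-dimensional weighted problem via radial symmetry is not the ``cosmetic modification'' you suggest. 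The paper's derivation of \eqref{intrins-ultra} feeds two concrete inputs into Davies' framework: the Nash--Sobolev inequality \eqref{nash} for the untransformed form $a_\mu$ on $L^2_\mu$, and the potential perturbation bound \eqref{stima-W} with $W(x)=|x|^\gamma$, $\gamma\in(\beta/2+1,\beta)$. The latter is elementary, but \eqref{nash} is obtained by a domination argument $e^{tA_\mu}f\le S(t)f$ against the pure diffusion semigroup $S(t)$ generated by $a\Delta$ in $L^2_\mu$, whose ultracontractivity with exponent $-N/2$ is imported from \cite[Thm.~2.14]{metafune-spina-3}. This is the ingredient your proposal is missing; it is precisely where $N>2$, $\alpha<2$, and the choice of reference measure $d\mu=(1+|x|^\alpha)^{-1}dx$ enter, and it does not follow from a one-dimensional reduction of the ground-state-transformed Dirichlet form. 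Without it, the functional inequality you need --- and hence the whole of Step~2 --- is not established.
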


\begin{proof}
The case $\alpha =0$ is already known, see \cite[Cor. 4.5.5 and Cor. 4.5.8]{davies}. Let us consider the case when $\alpha \in (0,2)$.
We split the proof into two steps. In the first one we estimate the function $k_{\mu}(t,\cdot,\cdot)$ for $t\in (0,1]$ and prove that
\begin{equation}\label{intrins-ultra}
k_\mu(t,x,y)\le K_1e^{ct^{-b}}\phi(x)\phi(y),\qquad\;\, 0<t\le 1,\;\, x,y\in \R^N,
\end{equation}
for some positive constant $K_1$. In the second one we prove that
\begin{equation}
k_{\mu}(t,x,y)\le K_2e^{\lambda_0t}\phi(x)\phi(y),\qquad\;\,t>1,\;\,x,y\in\R^N.
\label{tge1}
\end{equation}
Combining \eqref{intrins-ultra} and \eqref{tge1}, taking \eqref{k-mu} and Corollary \ref{estim-eigenfunct0} into account,
the assertion follows at once.

{\em Step 1.} Estimate \eqref{intrins-ultra} can be proved adapting the arguments used in \cite[Subsect. 4.4 and Subsect. 4.5]{davies}. For this reason we do not
elaborate the proof but we just check the crucial points, which are the estimates
\begin{equation}\label{nash}
\int_{\R^N}g|f|^2\,d\mu \le b_0\|g\|_{L^{N/2}_\mu}a_\mu(f,f),\qquad\;\, f\in D(a_\mu),\;\,g\in L^{N/2}_\mu,
\end{equation}
for some positive constant $b_0$, independent of $f$ and $g$, and
\begin{equation}
\int_{\R^N}W|f|^2\,d\mu \le \varepsilon(a_\mu(f,f)+\lambda_0\|f\|_{L^2_\mu}^2)+(c\varepsilon^{-\frac{\gamma}{\beta -\gamma}}-\varepsilon \lambda_0)\|f\|^2_{L^2_\mu},\qquad\;\, f\in D(a_\mu),
\label{stima-W}
\end{equation}
for any $\varepsilon>0$, any $\gamma\in (\beta/2+1,\beta)$ and some positive constant $c$. Here,
$W(x)=|x|^{\gamma}$ for any $x\in\R^N$.

To prove \eqref{nash} it suffices to show that the semigroup $(e^{tA_{\mu}})_{t\ge 0}$ is ultracontractive and
\begin{equation}
\|e^{tA_\mu}f\|_\infty\le Ct^{-\frac{N}{4}}\|f\|_{L^2_\mu},\qquad\;\, t>0,\,f\in L^2_\mu,
\label{iper-contr}
\end{equation}
for some positive constant $C$, independent of $t$.
Theorem 2.4.2 in \cite{davies} will then imply that there exists a constant $b_0>0$ such that
\begin{eqnarray*}
\|f\|_{L^{2N/(N-2)}_{\mu}}^2\le b_0a_{\mu}(f,f),\qquad\;\,f\in D(a_{\mu}),
\end{eqnarray*}
and H\"older inequality will yield estimate \eqref{stima-W}.

So, let us prove \eqref{iper-contr}. For this purpose, we denote by $(S(t))_{t\ge 0}$ the analytic semigroup
generated by the realization of the operator $a\Delta$ in $L^2_{\mu}$.
The results in \cite[Thm. 2.14]{metafune-spina-3} show that each operator
$S(t)$ is a contraction in $L^{\infty}(\R^N)$,
$S(t)\in L(L^1_{\mu}(\R^N),L^{\infty}(\R^N))$ and $\|S(t)\|_{L(L^1_{\mu},L^{\infty}(\R^N))}\le C_1t^{-N/2}$ for some positive constant $C_1$, independent of $t$. Stein interpolation theorem implies that $S(t)$ is ultracontractive and $\|S(t)f\|_{\infty}\le C_1^{1/2}t^{-N/4}\|f\|_{L^2_{\mu}}$.
To complete the proof of \eqref{iper-contr} it now suffices to show that $S(t)f\le e^{tA_{\mu}}f$ for any $t>0$ and any nonnegative $f\in L^2_{\mu}$.
In fact, we prove such a property for any nonnegative $f\in C^{\infty}_c(\R^N)$.
By Proposition \ref{prop-cons-Linf} we know that both the functions $e^{tA_{\mu}}f=T_2(t)f$ and $S(\cdot)f$ belong to $C_b([0,\infty)\times\R^N)\cap C^{1,2}((0,\infty)\times\R^N)$ and their difference $v$ satisfies the differential inequality $D_tv-a\Delta v\le 0$ and vanishes at $t=0$.
By a variant of the classical maximum principle (see e.g., \cite[Thm. 4.1.3]{libro}), we can infer that $v\le 0$.
Hence, $e^{tA_{\mu}}f\le S(t)f$ for any $t\ge 0$.

Estimate \eqref{stima-W} follows at once observing that
\begin{eqnarray*}
W(x)\le \varepsilon (V(x) +\lambda_0)+c\varepsilon^{-\frac{\gamma}{\beta -\gamma}}-\varepsilon \lambda_0,\qquad\;\,x\in\R^N,
\end{eqnarray*}
where $c=(\beta^{-1}\gamma)^{\gamma/(\beta-\gamma)}-(\beta^{-1}\gamma)^{\beta/(\beta-\gamma)}$.

{\em Step 2.} To estimate the function $k_{\mu}(t,\cdot,\cdot)$ for $t>1$, we use the Chapman-Kolmogorov equation and the symmetry
of $k_{\mu}(t,\cdot,\cdot)$ to infer that
\begin{eqnarray*}
k_{\mu}(t,x,y)=\int_{\R^N}k_{\mu}(t-1/2,x,z)k_{\mu}(1/2,y,z)d\mu(z),\qquad\;\,t>1/2,\;\,x,y\in\R^N.
\end{eqnarray*}
By Step 1, the function $k_{\mu}(1/2,y,\cdot)$ belongs to $L^2_{\mu}$. Hence,
\begin{eqnarray*}
k_{\mu}(t,x,y)=(e^{(t-\frac{1}{2})A_{\mu}}k_{\mu}(1/2,y,\cdot))(x),\qquad\;\,t>1/2,\;\,x,y\in\R^N.
\end{eqnarray*}
From formula \eqref{kmu-0} and estimate \eqref{spectr-behav} we now deduce that
\begin{align*}
k_{\mu}(t,x,x)=&\int_{\R^N}|k_{\mu}(t/2,x,y)|^2d\mu(y)\\
\le & M_2e^{\lambda_0(t-1)}\|k_{\mu}(1/2,x,\cdot)\|_{L^2_{\mu}}^2\\
= & M_2e^{\lambda_0(t-1)}k_{\mu}(1,x,x)\\
\le & K_2e^{\lambda_0t}(\phi(x))^2,
\end{align*}
for any $t>0$, any $x\in\R^N$ and some positive constant $K_2$.
Using the inequality $k_{\mu}(t,x,y)\le (k_{\mu}(t,x,x))^{1/2}(k_{\mu}(t,y,y))^{1/2}$, which holds for any $t>0$
and any $x,y\in\R^N$ and follows from the Chapman-Kolmogorov equation, we get \eqref{tge1}.
\end{proof}

Let $\lambda_j$ be an eigenvalue of $A_2$ and denote by $\psi_j$ any normalized (i.e. $\|\psi_j\|_{L^2(\R^N)}=1$) eigenfunction associated to $\lambda_j$. Then,
by \eqref{kmu-0}, with $2t$ instead of $t$, we get
\begin{align*}
e^{\lambda_j t}|\psi_j(x)|=&\left|\int_{\R^N}k_\mu(t,x,y)\psi_j(y)\,d\mu(y)\right|\\
\le & \left(\int_{\R^N}k_\mu(t,x,y)^2d\mu(y)\right)^{\frac{1}{2}}\|\psi_j\|_{L^2_\mu}\\
= & (k_\mu(2t,x,x))^{\frac{1}{2}},
\end{align*}
for any $t>0$ and any $x\in\R^N$.
So, by Theorem \ref{kernel-estimates} we obtain
\begin{corollary}
If the assumptions of Theorem $\ref{kernel-estimates}$ hold, then all normalized eigenfunctions $\psi_j$ of $A_2$ satisfy
\begin{eqnarray*}
|\psi_j(x)|\le C|x|^{\frac{\alpha-\beta}{4}-\frac{N-1}{2}}\exp\left(-\int_1^{|x|}\frac{s^{\beta/2}}{(1+s^\alpha)^{1/2}}ds\right),
\end{eqnarray*}
for all $x\in\R^N\setminus B(0,1)$ and a constant $C>0$.
\end{corollary}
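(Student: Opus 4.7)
The plan is to combine the spectral identity $e^{tA_\mu}\psi_j = e^{\lambda_j t}\psi_j$ with the kernel bound from Theorem \ref{kernel-estimates}. Since $\psi_j\in L^2(\R^N)\subset L^2_\mu$ is an eigenfunction of $A_2$, by Lemma \ref{lemma-3.4}(ii) it is also an eigenfunction of $A_\mu$ with the same eigenvalue $\lambda_j$, so
\[
e^{\lambda_j t}\psi_j(x)=(e^{tA_\mu}\psi_j)(x)=\int_{\R^N}k_\mu(t,x,y)\psi_j(y)\,d\mu(y),\qquad t>0,\;x\in\R^N.
\]

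Next I would apply the Cauchy--Schwarz inequality in $L^2_\mu$ to the right-hand side, obtaining
\[
e^{\lambda_j t}|\psi_j(x)|\le \|k_\mu(t,x,\cdot)\|_{L^2_\mu}\|\psi_j\|_{L^2_\mu}.
\]
Since $d\mu(y)=(1+|y|^\alpha)^{-1}dy\le dy$ and the normalization $\|\psi_j\|_{L^2(\R^N)}=1$ holds, the second factor is bounded by $1$. For the first factor, the semigroup identity together with the symmetry of $k_\mu(t,\cdot,\cdot)$ gives the reproducing formula \eqref{kmu-0}, namely $\|k_\mu(t,x,\cdot)\|_{L^2_\mu}^2=k_\mu(2t,x,x)$. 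Hence
\[
e^{\lambda_j t}|\psi_j(x)|\le (k_\mu(2t,x,x))^{1/2}.
\]

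Now I would invoke Theorem \ref{kernel-estimates}, combined with the identity \eqref{k-mu} which yields $k_\mu(2t,x,x)=(1+|x|^\alpha)k(2t,x,x)$. For $|x|>1$ this produces
\[
k_\mu(2t,x,x)\le K e^{2\lambda_0 t}e^{c(2t)^{-b}}f_0(x)^2,
\]
and substituting above gives
\[
|\psi_j(x)|\le \sqrt{K}\,e^{(\lambda_0-\lambda_j)t}\,e^{c(2t)^{-b}/2}\,f_0(x),\qquad |x|>1.
\]

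Finally, specializing to any convenient value of $t>0$ (for instance $t=1$) absorbs the $t$-dependent factors into a single constant $C=C_j>0$ depending on $j$ via $\lambda_j$, and reading off the explicit form of $f_0$ from \eqref{flambda} yields the claimed estimate. I do not expect any serious obstacle here: the entire argument is a short consequence of the kernel estimate in Theorem \ref{kernel-estimates}, and the only delicate point is that one must use $\|\psi_j\|_{L^2_\mu}\le\|\psi_j\|_{L^2(\R^N)}=1$ (which holds because $d\mu\le dx$) rather than trying to normalize in $L^2_\mu$.
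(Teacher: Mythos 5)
Your proposal is correct and follows essentially the same route as the paper: express $e^{\lambda_j t}\psi_j(x)$ via the heat kernel $k_\mu$, apply Cauchy--Schwarz in $L^2_\mu$, use the reproducing identity $\|k_\mu(t,x,\cdot)\|_{L^2_\mu}^2=k_\mu(2t,x,x)$, and then invoke Theorem~\ref{kernel-estimates} together with \eqref{k-mu}. The only difference is that you are slightly more careful than the paper's displayed computation in noting $\|\psi_j\|_{L^2_\mu}\le\|\psi_j\|_{L^2(\R^N)}=1$ (rather than asserting equality), and in flagging that the resulting constant may depend on $j$ through $\lambda_j$; both are harmless refinements.
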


\subsection{A slightly more general class of elliptic operators}
Let us consider the operator $\mathcal{B}$, defined on smooth functions $u$ by
\begin{eqnarray*}
\mathcal{B}u=a\sum_{j,k=1}^ND_k(q_{kj}D_ju)-Wu,
\end{eqnarray*}
(where, as usual, $a(x)=(1+|x|^{\alpha})$ for any $x\in\R^N$) under the following set of assumptions:

\begin{hyp}
\label{hyp-1}\begin{enumerate}[\rm (i)]
\item
the coefficients $q_{kj}=q_{jk}$ belong to $C_b(\R^N)\cap W^{1,\infty}_{\rm loc}(\R^N)$ for any $j,k=1,\ldots,N$
and there exists a positive constant $\eta$ such that
\begin{eqnarray*}
\eta |\xi|^2  \le \sum_{j,k=1}^Nq_{kj}(x)\xi_k\xi_j,\qquad\;\,x,\xi \in \R^N;
\end{eqnarray*}
\item
$W\in L^1_{\rm loc}(\R^N)$ satisfies $W(x)\ge |x|^\beta$  for any $x\in \R^N$ and some $\beta>2$;
\item
$\alpha\in [0,2)$ and $D_j q_{kj}(x) = {\rm o}(|x|^{\frac{\beta-\alpha}{2}})$ as $|x| \to \infty$.
\end{enumerate}
\end{hyp}

On $L^2_\mu$ we define the bilinear form
\begin{align*}
b_\mu(u,v)=\sum_{j,k=1}^N\int_{\R^N}q_{kj}D_kuD_j\overline{v}\,dx+\int_{\R^N}Wu\overline{v}\,d\mu,\qquad\;\,u,v\in D(b_{\mu}),
\end{align*}
where $D(b_{\mu})=\{u\in L^2_{\mu}: W^{1/2}\in L^2_{\mu},\,\nabla u\in (L^2(\R^N))^N\}$.
Since $b_{\mu}$ is a symmetric, accretive and closable form, we can associate a positive strongly continuous semigroup in $L^2_{\zeta^2\mu}$ (we refer the reader
again to \cite[Prop. 1.51, Thms. 1.52, 2.6, 2.13]{ouhabaz}).
The same arguments as at the beginning of this section show
that the infinitesimal generator $B_{\mu}$ of this semigroup is the realization in $L^2_{\mu}$
of the operator $\mathcal{B}$ with domain $D(B_{\mu})=\{u\in D(b_{\mu})\cap W^{2,2}_{\rm loc}(\R^N): \mathcal{B}u\in L^2_{\mu}\}$.

In this subsection we prove upper estimates for the kernel $p_{\mu}$ of the semigroup $(e^{tB_{\mu}})_{t\ge 0}$.
For this purpose, for any $\theta>0$ we introduce the sesquilinear form $a_{\mu,\theta}$ defined by
\begin{eqnarray*}
a_{\mu,\theta}(u,v)=\int_{\R^N}\nabla u\cdot\nabla\overline{v}\,dx+\theta^2\int_{\R^N}Vu\overline{v}d\mu,\qquad\;\,u,v\in D(a_{\mu,\theta})=
D(a_{\mu}),
\end{eqnarray*}
where $V(x)=|x|^{\beta}$ for any $x\in\R^N$.
The arguments in the first part of this section can be applied to the
analytic semigroup associated to the form $a_{\mu,\theta}$ in $L^2_{\mu}$ and its kernel
$k_{\mu,\theta}$. In particular, arguing as in the proof of \eqref{intrins-ultra} and \eqref{tge1} one can show that
\begin{equation}
0<k_{\mu,\theta}(t,x,y)\le  K_{\theta}e^{\lambda_{0,\theta}t}e^{\tilde c_{\theta}t^{-b}}\phi_{0,\theta}(x)\phi_{0,\theta}(y),\qquad\;\,t>0,\;\,x,y\in\R^N,
\label{kernel-estim-theta}
\end{equation}
where $\tilde c_{\theta}$ and $K_{\theta}$ are positive constants, $\lambda_{0,\theta}$
is the largest (negative) eigenvalue of the minimal realization of
operator ${\mathcal A}_{\theta}$ in $L^2(\R^N)$, and $\phi_{0,\theta}$ is a corresponding
positive and bounded eigenfunction. Moreover, there exist two positive constants $C_{1,\theta}$ and $C_{2,\theta}$ such that
\begin{eqnarray*}
C_{1,\theta}\le |x|^{-\frac{\alpha-\beta}{4}+\frac{N-1}{2}}\exp\left (-\theta\int_1^{|x|}\frac{s^{\beta/2}}{(1+s^\alpha)^{1/2}}ds\right )\phi_{0,\theta}(x)\le C_{2,\theta},
\end{eqnarray*}
for any $x\in\R^N\setminus B(0,1)$.

In the proof of Theorem \ref{thm-ultimo} we will also need the precise asymptotic behavior of $|\nabla \phi_{0,\theta}|$.
\begin{proposition}\label{estim-gradient}
Assume that $\alpha\in [0,2)$ and $\beta>0$. Then,
\begin{eqnarray*}
\lim_{|x|\to \infty}\frac{|\nabla \phi_{0,\theta}(x)|^2}{(\phi_{0,\theta}(x))^2}\cdot \frac{1+|x|^\alpha}{|x|^\beta}=\frac{1}{\theta^2}.
\end{eqnarray*}
\end{proposition}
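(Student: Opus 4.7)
My plan is to exploit the radial symmetry of $\phi_{0,\theta}$ and reduce the statement to an asymptotic analysis of a first-order Riccati equation. By the $\theta$-analogue of Proposition~\ref{thm-eigenfunct}, $\phi_{0,\theta}$ is radial, so I write $\phi_{0,\theta}(x)=\Phi(|x|)$ with $\Phi\in C^{2}((0,\infty))$ strictly positive and $\Phi(r)\to 0$ as $r\to\infty$. The eigenvalue equation ${\mathcal A}_{\theta}\phi_{0,\theta}=\lambda_{0,\theta}\phi_{0,\theta}$ becomes
\[
(1+r^{\alpha})\!\left[\Phi''(r)+\tfrac{N-1}{r}\Phi'(r)\right]=\bigl(\theta^{2}r^{\beta}+\lambda_{0,\theta}\bigr)\Phi(r),\qquad r>0,
\]
and $|\nabla\phi_{0,\theta}(x)|=|\Phi'(|x|)|$. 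Introducing the logarithmic derivative $h(r):=-\Phi'(r)/\Phi(r)$, smooth since $\Phi>0$, this converts to the Riccati equation
\[
h(r)^{2}-h'(r)-\tfrac{N-1}{r}h(r)=\psi(r),\qquad \psi(r):=\tfrac{\theta^{2}r^{\beta}+\lambda_{0,\theta}}{1+r^{\alpha}}.
\]
Since $|\nabla\phi_{0,\theta}|^{2}/\phi_{0,\theta}^{2}=h(r)^{2}$ and $\psi(r)(1+r^{\alpha})/r^{\beta}\to\theta^{2}$ as $r\to\infty$, the proposition reduces to establishing the pointwise equivalence $h(r)^{2}/\psi(r)\to 1$, i.e.\ $[h'(r)+(N-1)h(r)/r]/\psi(r)\to 0$.

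I would first establish positivity of $h$ on a half-line $[R_{0},\infty)$: at any critical point $r^{*}$ of $\Phi$ with $r^{*}$ large, $h(r^{*})=0$, and the Riccati equation forces $h'(r^{*})=-\psi(r^{*})<0$; this precludes critical points of $\Phi$ past some $R_{0}$, and combined with $\Phi\to 0$ it gives $\Phi$ strictly decreasing and $h>0$ on $[R_{0},\infty)$. Then I would exploit the two-sided exponential bound on $\phi_{0,\theta}$ recalled just before the proposition: logarithmically differentiating that bound and integrating $h=-(\log\Phi)'$ yields the averaged asymptotic
\[
\int_{R}^{r}h(s)\,ds=\theta\!\int_{R}^{r}\!s^{\beta/2}(1+s^{\alpha})^{-1/2}\,ds+O(\log r)\qquad\text{as }r\to\infty,
\]
so that $h$ is close to $\sqrt{\psi}$ in an integrated sense.

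The hard part is upgrading this averaged statement to pointwise convergence. My approach is a barrier/comparison argument on the normalized variable $H(r):=h(r)/\sqrt{\psi(r)}>0$, which solves
\[
H'(r)=\sqrt{\psi(r)}\bigl(H(r)^{2}-1\bigr)-\Bigl(\tfrac{N-1}{r}+\tfrac{\psi'(r)}{2\psi(r)}\Bigr)H(r),
\]
with drift correction $O(1/r)$ since $\alpha\in[0,2)$ and $\beta>0$. A sustained excess $H\ge 1+\delta$ on an unbounded set would make $\int h$ exceed $\theta\!\int\!s^{\beta/2}(1+s^{\alpha})^{-1/2}ds$ by an amount comparable to $\delta\int\sqrt{\psi}\to\infty$, contradicting the $O(\log r)$ averaged estimate; a sustained deficit $0<H\le 1-\delta$ gives the analogous lower-sided contradiction. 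Hence $H(r)\to 1$, which is the desired $h(r)^{2}\sim\psi(r)$; substituting this back into the Riccati equation yields $h'/\psi=O(1/r)$ and $(N-1)h/(r\psi)=O(1/r)$, from which the stated limit follows. The barrier analysis of the Riccati equation is the technical heart of the argument, and it must be carried out with some care in the three regimes $\beta\gtrless\alpha$, since $\sqrt{\psi(r)}$ respectively diverges, is bounded, or vanishes at infinity, so the relative strengths of the nonlinear and drift terms differ.
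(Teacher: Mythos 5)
Your Riccati reduction is a genuinely different route from the paper's. The paper passes to the Liouville normal form via $v(r)=|r|^{(N-1)/2}\phi_{\star,\theta}(-r)$, invokes Olver's Liouville--Green (WKB) theorem to obtain an explicit asymptotic expansion of the bounded solution together with control on the error $\varepsilon_1$ and its derivative, and concludes by differentiating the asymptotic formula directly. Your approach dispenses with the Olver citation and is more elementary, but the barrier analysis at its heart is only sketched, and one of its ingredients needs repair.

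The problematic ingredient is the averaged asymptotic $\int_R^r h\,ds=\theta\int_R^r s^{\beta/2}(1+s^\alpha)^{-1/2}\,ds+O(\log r)$: it rests on the two-sided exponential bound on $\phi_{0,\theta}$, which is the $\theta$-analogue of Corollary~\ref{estim-eigenfunct0} and therefore requires $\alpha+\beta>2$, stronger than the hypothesis $\alpha\in[0,2)$, $\beta>0$ of the proposition. Moreover, even granting it, a ``sustained excess'' $H\ge1+\delta$ on a set of infinite $\sqrt\psi$-measure does not by itself contradict the averaged bound without lower control on $H$ on the complementary set. The cleanest fix is to drop the averaged estimate entirely and close both barriers from the ODE. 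Write $g:=\sqrt\psi$, $c(r):=\tfrac{N-1}{r}+\tfrac{\psi'(r)}{2\psi(r)}$. Since $(\beta-\alpha)/2>-1$ (automatic for $\alpha<2$, $\beta>0$), one has $g/|c|\to\infty$ and $\int^\infty g\,ds=\infty$. Choose $r_0$ so large that $g/|c|>1/\varepsilon$ on $[r_0,\infty)$. If $H(r_0)\ge1+2\varepsilon$, then, using $H^2-1\ge2\varepsilon H$, one gets $H'=g(H^2-1)-cH\ge \varepsilon gH>0$ while $H\ge1+2\varepsilon$; thus $H\to\infty$, and once $H\ge2$ the stronger bound $H'\ge\tfrac12gH^2$ forces finite-$r$ blow-up, a contradiction. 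If $H(r_0)\le1-2\varepsilon$, then $H'\le-\varepsilon g<0$ while $H\le1-2\varepsilon$, so $H(r)\le H(r_0)-\varepsilon\int_{r_0}^rg\to-\infty$, contradicting $h>0$. Hence $\limsup H\le1$ and $\liminf H\ge1$, giving $H\to1$ and the stated limit. This argument also dissolves the worry about separate regimes $\beta\gtrless\alpha$, since it uses only the two displayed properties of $g$ and $c$. Finally, the positivity of $h$ past some $R_0$ should be finished: at any zero $r^*$ of $h$ with $\psi(r^*)>0$ the Riccati gives $h'(r^*)=-\psi(r^*)<0$, so $h$ cannot cross zero from below; hence once $h$ becomes negative it stays negative, $\Phi$ is eventually increasing, and this contradicts $\Phi\to0$. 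With these repairs your proof is correct and reaches the same asymptotic that the paper extracts from WKB.
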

\begin{proof}
Since $\phi_{0,\theta}$ is radial (see Proposition \ref{thm-eigenfunct}), there exists a function $\phi_{\star,\theta}$ such that
$\phi_{0,\theta}(x)=\phi_{\star,\theta}(|x|)$ for any $x\in\R^N$. Let us consider the function $v$ defined by
$v(r)=|r|^{(N-1)/2}\phi_{\star,\theta}(-r)$ for any $r\ge 0$.
As it is easily seen,
\begin{eqnarray*}
v''(r)=v(r)\left(\frac{\theta^2|r|^\beta +\lambda_0}{1+|r|^\alpha}+\frac{N^2-4N+3}{4|r|^2}\right ),\qquad\;\,r<0.
\end{eqnarray*}
By \cite[Chpt. 6, Thm. 2.1]{olver} we know that there exist two solutions $w_1$ and $w_2$ of the previous equation, given by the following formula:
\begin{align*}
w_j(r)=&\left(\frac{1+|r|^\alpha}{\theta^2|r|^\beta +\lambda_0}\right)^{\frac{1}{4}}
\exp\bigg((-1)^j\int_{-1}^r\frac{(\theta^2|s|^\beta +\lambda_0)^{1/2}}{(1+|s|^\alpha)^{1/2}}\,ds\bigg)(1+\varepsilon_j(r)),
\end{align*}
for $j=1,2,$,
where $\varepsilon_j(r)$ and $(1+|r|^\alpha)^{1/2}(\theta^2|r|^\beta +\lambda_0)^{-1/2}\varepsilon_j'(r)$ tend
to $0$ as $r\to -\infty$. This last assertion follows from applying \cite[Formula (2.04)]{olver} noting
that the function $F$ in \cite[Formula (2.01)]{olver} has bounded variation in $(-\infty,a]$ for any $a<0$ since it is therein bounded and Lipschitz continuous.
It then follows that $w_1$ and $w_2$ are linearly independent since $w_1$ is unbounded whereas $w_2$ is bounded in $(-\infty,0)$. Hence, $v$ is a linear combination of the functions
$w_1$ and $w_2$.
Since $\phi_{\star,\theta}$ is bounded,
and $r\mapsto r^{-(N-1)/2}w_1(r)$ tends to $\infty$ as $r\to -\infty$, we obtain that $v=w_2$, i.e.,
\begin{eqnarray*}
\phi_{\star,\theta}(r)=cr^{-\frac{N-1}{2}}\left(\frac{1+r^\alpha}{\theta^2r^\beta +\lambda_0}\right)^{\frac{1}{4}}
\exp\bigg(-\int_1^r\frac{(\theta^2s^\beta +\lambda_0)^{1/2}}{(1+s^\alpha)^{1/2}}\,ds\bigg)(1+\varepsilon_1(r)),
\end{eqnarray*}
for any $r>0$ and some positive constant $c$.

Now, a direct computation yields
\begin{align*}
\phi_{\star,\theta}'(r)=\phi_{\star,\theta}(r) \bigg(&-\frac{N-1}{2r}+\frac{(\alpha -\beta)\theta^2r^{\alpha +\beta -1}+\alpha \lambda_0r^{\alpha -1}-\beta\theta^2 r^{\beta -1}}{4(1+r^\alpha)(\theta^2r^\beta +\lambda_0)}\\
&-\frac{(\theta^2r^\beta +\lambda_0)^{1/2}}{(1+r^\alpha)^{1/2}}+\frac{\varepsilon_1'(r)}{1+\varepsilon_1(r)}\bigg ),
\end{align*}
for any $r>0$.
Observing that the leading term (as $r\to \infty$) in the round brackets is the function $r\mapsto -(\theta^2r^{\beta}+\lambda_0)^{1/2}(1+r^{\alpha})^{-1/2}$, the assertion follows at once.
\end{proof}

\begin{theorem}
\label{thm-ultimo}
Assume that Hypotheses $\ref{hyp-1}$ are satisfied and
let
\begin{eqnarray*}
\Lambda:=\sup_{x,\xi\in\R^N\setminus\{0\}}|\xi|^{-2}\sum_{j,k=1}^Nq_{kj}(x)\xi_k\xi_j.
\end{eqnarray*}
Then, for any $\theta\in (0,\Lambda^{-1/2})$, we have
\begin{align}
p_\mu(t,x,y)\le &M_{\theta}e^{\lambda_{0,\theta}t}e^{c_{\theta}t^{-b}}|xy|^{\frac{\alpha-\beta}{4}-\frac{N-1}{2}}\notag\\
&\qquad\times\exp\left (-\theta\int_1^{|x|}\frac{s^{\beta/2}}{(1+s^\alpha)^{1/2}}ds-\theta\int_1^{|y|}\frac{s^{\beta/2}}{(1+s^\alpha)^{1/2}}ds\right ),
\label{estim-pmu}
\end{align}
for any $t>0$ and $x,y\in\R^N\setminus B(0,1)$, where $M_{\theta}$, $c_{\theta}$ are positive constants, and $b>\frac{\beta +2}{\beta -2}$.
\end{theorem}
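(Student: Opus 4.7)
The plan is to reduce \eqref{estim-pmu} to the already-established estimate \eqref{kernel-estim-theta} for $k_{\mu,\theta}$ via the Ouhabaz--Rhandi technique \cite{ouhabaz-rhandi}, using the ground state $\phi_{0,\theta}$ of the reference operator $\mathcal{A}_\theta$ to perform a similarity transform.

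First I would set $h:=\phi_{0,\theta}$ and introduce the transformed symmetric form $\tilde a$ on the weighted Hilbert space $L^2(h^2\,d\mu)$ defined by $\tilde a(u,v):=b_\mu(hu,hv)$. A direct integration by parts gives
\begin{align*}
\tilde a(u,v)=\int_{\R^N}h^2\sum_{j,k}q_{kj}D_k u\,\overline{D_j v}\,dx+\int_{\R^N}W_{\mathrm{eff}}\,u\bar v\,h^2\,d\mu,
\end{align*}
where the effective potential is $W_{\mathrm{eff}}:=W-a\,h^{-1}\sum_{j,k}D_k(q_{kj}D_j h)$. The semigroup $(\tilde T(t))_{t\ge 0}$ associated to $\tilde a$ on $L^2(h^2d\mu)$ has kernel $\tilde p$ related to $p_\mu$ by the similarity identity $p_\mu(t,x,y)=h(x)h(y)\tilde p(t,x,y)$, so it suffices to bound $\tilde p$.

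The analytic heart of the proof is the lower bound
\begin{align*}
W_{\mathrm{eff}}(x)\ge(1-\theta^2\Lambda)V(x)+o(V(x)),\qquad|x|\to\infty.
\end{align*}
The leading contribution of $a\,h^{-1}\sum_{j,k}q_{kj}D_{kj}h$ is controlled by Proposition \ref{estim-gradient} (which pins down the $V/a$-asymptotics of $|\nabla h|^2/h^2$) combined with the ellipticity upper bound $\sum_{j,k}q_{kj}\xi_k\xi_j\le\Lambda|\xi|^2$, yielding a contribution $\le \theta^2\Lambda\,V+o(V)$. The remaining piece $a\,h^{-1}\sum_{j,k}(D_kq_{kj})D_jh$ is $o(V)$ thanks to Hypothesis \ref{hyp-1}(iii). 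Combined with $W\ge V$ and the condition $\theta^2\Lambda<1$, this shows that $\tilde a$ is a Schr\"odinger-type form with a potential growing like $V(x)=|x|^\beta$.

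With this structural reduction in hand, the whole machinery of Step 1 in the proof of Theorem \ref{kernel-estimates} applies verbatim to $\tilde a$ on $L^2(h^2d\mu)$: the analogues of \eqref{nash} and \eqref{stima-W} hold because $h^2$ is a smooth positive weight bounded between positive constants on each ball, so the Sobolev/Nash inequalities transfer with modified constants; the key point is still that the intermediate potential $|x|^\gamma$ with $\gamma\in(\beta/2+1,\beta)$ satisfies $|x|^\gamma\le\varepsilon W_{\mathrm{eff}}+c_\varepsilon$. Davies' lemma then yields, for $0<t\le 1$, the ultracontractive bound $\tilde p(t,x,y)\le \tilde K e^{\tilde c t^{-b}}$ with $b>(\beta+2)/(\beta-2)$. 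For $t>1$ the semigroup property and the spectral bound \eqref{spectr-behav} applied to $\tilde T(t)$ (whose largest eigenvalue is $\lambda_{0,\theta}$, the ground state energy of the conjugate problem) produce the factor $e^{\lambda_{0,\theta}t}$, exactly as in Step 2 of the proof of Theorem \ref{kernel-estimates}. Returning to $p_\mu$ via $p_\mu(t,x,y)=h(x)h(y)\tilde p(t,x,y)$ and using the asymptotic formula for $\phi_{0,\theta}$ recorded after \eqref{kernel-estim-theta} produces \eqref{estim-pmu}.

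The main obstacle will be the careful verification of the expansion of $W_{\mathrm{eff}}$, where one must separate the leading $\theta^2\Lambda V$ piece (coming from the conjugation of the principal part of $\mathcal{B}$) from the genuine lower-order corrections supplied by Hypothesis \ref{hyp-1}(iii); the precise asymptotic control from Proposition \ref{estim-gradient} is what allows this cancellation to be identified, and the threshold $\theta<\Lambda^{-1/2}$ is exactly what guarantees a positive residual $(1-\theta^2\Lambda)V$ driving the Schr\"odinger-type decay of $\tilde T$.
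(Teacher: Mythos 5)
Your overall strategy — conjugate by the ground state $\phi_{0,\theta}$ of the reference Schr\"odinger operator, compute the effective potential, and then run an ultracontractivity/log-Sobolev argument — is exactly the Ouhabaz--Rhandi blueprint the paper follows, and your computation of $W_{\mathrm{eff}}=W-ah^{-1}\sum_{j,k}D_k(q_{kj}D_jh)$ together with the bound $W_{\mathrm{eff}}\ge(1-\theta^2\Lambda)V+o(V)$ (using Proposition \ref{estim-gradient}, the ellipticity bound $\Lambda$, and Hypothesis \ref{hyp-1}(iii)) is correct. This is also, essentially, what the paper uses to verify that the transformed potential is bounded from below, which yields the $L^\infty$-boundedness of the conjugated semigroups.

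However, there is a genuine gap at the step where you claim that ``the analogues of \eqref{nash} and \eqref{stima-W} hold because $h^2$ is a smooth positive weight bounded between positive constants on each ball, so the Sobolev/Nash inequalities transfer with modified constants.'' This is false as stated. The weight $h^2=\phi_{0,\theta}^2$ decays like $\exp(-2\theta\int_1^{|x|}s^{\beta/2}(1+s^\alpha)^{-1/2}ds)$ at infinity, and a uniform Sobolev embedding of $D(\tilde a)$ into $L^{2N/(N-2)}(h^2d\mu)$ does not follow from local boundedness of the weight. Indeed, by the Varopoulos/Davies equivalence (cf.\ \cite[Thm.\ 2.4.2]{davies}), such a weighted Sobolev inequality for $\tilde a$ is \emph{equivalent} to the ultracontractivity bound you are trying to establish, so invoking it at this stage is circular. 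This circularity is precisely the difficulty that the intrinsic ultracontractivity machinery is designed to break, and it cannot be dismissed by a ``transfer with modified constants''.

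The paper breaks the circle differently: it never attempts a new Sobolev inequality for $\tilde b_\mu$. Instead it observes that $\tilde b_\mu(u,u)\ge\min\{\eta,\theta^{-2}\}\,\tilde a_{\mu,\theta}(u,u)$, where $\tilde a_{\mu,\theta}$ is the conjugate of the \emph{known} form $a_{\mu,\theta}$ by the \emph{same} weight $\zeta\asymp\phi_{0,\theta}$. The logarithmic Sobolev inequality for $\tilde a_{\mu,\theta}$ is then read off directly from the already-established kernel bound \eqref{kernel-estim-theta} for $k_{\mu,\theta}$, again via \cite[Thm.\ 2.2.3]{davies}. The form comparison transports this logarithmic Sobolev inequality to $\tilde b_\mu$, and, together with the domain-invariance of $u\wedge 1$ (property (i)) and the $L^\infty$-bound $e^{C_1 t}$ (property (ii), which is where your effective-potential computation actually earns its keep), yields the ultracontractive bound for $t\in(0,1]$ via \cite[Lemma 2.1.2, Cor.\ 2.2.8, Ex.\ 2.3.4]{davies}. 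So the missing ingredient in your proposal is precisely this form comparison; with it, your effective-potential analysis is a valid piece of the argument rather than a stand-alone route. As a minor additional point, the largest eigenvalue of the conjugated semigroup $(\tilde T(t))_{t\ge0}$ is that of $B_\mu$, not $\lambda_{0,\theta}$; the factor $e^{\lambda_{0,\theta}t}$ for $t>1$ again comes from the comparison with $a_{\mu,\theta}$.
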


\begin{proof}
The assertion can be proved adapting the arguments in \cite{ouhabaz-rhandi} to our situation. For the reader's convenience we
give the main ideas of the proof and some details. To overcome cumbersome notations, throughout the proof we do not stress the
dependence of the constants on $\theta$.

Let us denote by $\zeta$ any positive and smooth function such that
\begin{eqnarray*}
\zeta(x)=|x|^{\frac{\alpha-\beta}{4}-\frac{N-1}{2}}\exp\left (-\theta\int_1^{|x|}\frac{s^{\beta/2}}{(1+s^\alpha)^{1/2}}ds\right ),\qquad\;\,x\in
\R^N\setminus B(0,1).
\end{eqnarray*}
Since the ratio $\phi_{0,\theta}^{-1}\zeta$ is bounded from below and above by two positive constants, proving \eqref{estim-pmu}
is equivalent to showing that
\begin{equation}
\zeta^{-1}(x)p_{\mu}(t,x,y)\zeta^{-1}(y)\le M_1e^{ct^{-b}}e^{\lambda_{0,\theta}t},\qquad\;\,t>0,\;\,x,y\in\R^N,
\label{stima-nucleo-1}
\end{equation}
for some positive constants $M$ and $c$. Note that
the left-hand side of \eqref{stima-nucleo-1} is the kernel of the semigroup
$({\mathcal I}_{\zeta}^{-1}e^{tB_{\mu}}{\mathcal I}_{\zeta})_{t\ge 0}$ in $L^2_{\zeta^2\mu}$, where
${\mathcal I}_{\psi}:L^2_{\psi^2\mu}\to
L^2_{\mu}$ is the isometry defined by ${\mathcal I}_{\psi}f=f\psi$ for any $f\in L^2_{\zeta^2\mu}$ and any positive measurable function $\psi$.
Clearly, this semigroup (which from now on we denote by $(e^{t\tilde B_{\mu}})_{t\ge 0}$) is associated to
the form $\tilde b_{\mu}$ on $L^2_{\zeta^2\mu}$, defined by
$\tilde b_{\mu}(u,v)=b_{\mu}(\zeta u,\zeta v)$ for any $u,v\in D(\tilde b_{\mu})=\{u: \zeta u\in D(b_{\mu})\}$.

The main step of the proof consists in establishing \eqref{stima-nucleo-1} for $t\in (0,1]$. Indeed, once it is proved
for $t\in (0,1]$, \eqref{stima-nucleo-1} can be extended to any $t>0$ arguing as in the last part of the proof of Theorem
\ref{kernel-estimates}.

To establish \eqref{stima-nucleo-1} for $t\in (0,1]$, one has to prove the following facts:
\begin{enumerate}[{\rm (i)}]
\item
$u\wedge 1:=\min\{u,1\}\in D(\tilde a_{\mu,\theta})$ (resp. $D(\tilde b_{\mu})$) for any nonnegative $u\in D(\tilde a_{\mu,\theta})$ (resp.
$D(\tilde b_{\mu})$);
\item
the semigroup $(e^{t\tilde B_{\mu}})_{t\ge 0}$ and the semigroup
$(e^{t\tilde A_{\mu,\theta}})_{t\ge 0}$, associated to the form $\tilde a_{\mu,\theta}=a_{\mu,\theta}(\zeta\cdot,\zeta\cdot)$
with domain $D(\tilde a_{\mu,\theta})=D(\tilde b_{\mu})$, are positive, they map $L^{\infty}(\R^N)$ into itself and satisfy the estimates
\begin{eqnarray*}
\|e^{t\tilde A_{\mu,\theta}}\|_{L(L^{\infty}(\R^N))}\le e^{C_1t},\qquad\;\,
\|e^{t\tilde A_{\mu,\theta}}\|_{L(L^{\infty}(\R^N))}\le e^{C_1t},\;\,t>0,
\end{eqnarray*}
for some positive constant $C_1$;
\item
the Log-Sobolev inequality
\begin{align}
\;\;\;\;\;\int_{\R^N}u^2(\log u)\zeta^2d\mu\le\varepsilon\tilde b_{\mu}(u,u)
+\|u\|_{L^2_{\zeta^2\mu}}^2\log\|u\|_{L^2_{\zeta^2\mu}}+c(1+\varepsilon^{-b})\|u\|_{L^2_{\zeta^2\mu}}^2,
\label{LSI}
\end{align}
holds true for any nonnegative $u\in D(\tilde b_{\mu})\cap L^1_{\zeta^2\mu}\cap L^{\infty}(\R^N)$, where $c$ is the constant in
\eqref{stima-nucleo-1}.
\end{enumerate}
From \eqref{LSI} it follows immediately that
the form $\hat b_{\mu,\theta}(\cdot,\cdot):=\tilde b_{\mu,\theta}(\cdot,\cdot)+C_1(\cdot,\cdot)_{\zeta^2\mu}$,
satisfies the Log-Sobolev inequality
\begin{eqnarray*}
\int_{\R^N}u^2(\log u)\zeta^2d\mu\le\varepsilon\hat b_{\mu}(u,u)
+\|u\|_{L^2_{\zeta^2\mu}}^2\log\|u\|_{L^2_{\zeta^2\mu}}+c(1+\varepsilon^{-b})\|u\|_{L^2_{\zeta^2\mu}}^2,
\end{eqnarray*}
for any nonnegative $u\in D(\tilde b_{\mu})\cap L^1_{\zeta^2\mu}\cap L^{\infty}(\R^N)$,
and it is associated to the Markov semigroup $(e^{t(-C_1+\tilde B_{\mu})})_{t\ge 0}$.
Hence, combining \cite[Lemma 2.1.2, Cor. 2.2.8 and Ex. 2.3.4]{davies},
estimate \eqref{stima-nucleo-1} follows with $t\in (0,1]$.
\medskip

Let us elaborate with some details the previous three points.

Property (i) is satisfied with the set
$D:=\{u: u\phi_{0,\theta}\in D(a_{\mu,\theta})\}$ instead of
$D(\tilde a_{\mu,\theta})$, as it follows
from \cite[Cor. 2.17]{ouhabaz} applied to the form $a_{\mu,\theta}(\phi_{0,\theta}\cdot,\phi_{0,\theta}\cdot)$, which has
$D$ as a domain, and it is associated to the $L^{\infty}$-contractive semigroup $({\mathcal I}_{\phi_{0,\theta}}^{-1}e^{tA_{\mu,\theta}}{\mathcal I}_{\phi_{0,\theta}})_{t\ge 0}$.
On the other hand the spaces $D$ and $D(\tilde a_{\mu,\theta})$ coincide.
This follows from recalling that the ratio $\phi_{0,\theta}^{-1}\zeta$ is bounded from below and
above in $\R^N$ by two positive constants and from using Proposition \ref{estim-gradient} and a straightforward computation
to estimate
\begin{eqnarray*}
|\nabla (\zeta^{-1}\phi_{0,\theta})|
\le C_2(|\phi_{0,\theta}^{-1}\nabla\phi_{0,\theta}|+|\zeta^{-1}\nabla\zeta|)
\le C_3V^{1/2}\le C_3W^{1/2},
\end{eqnarray*}
for some positive constants $C_2$ and $C_3$. Hence, property (i) is satisfied by any nonnegative $u\in D(\tilde a_{\mu,\theta})$.
Since $D(\tilde b_{\mu})\subset D(\tilde a_{\mu,\theta})$ and $|W^{1/2}(u\wedge 1)|\le W^{1/2}|u|$ for
any $u\in D(\tilde b_{\mu})$, it is easy to check that property (ii) is satisfied also by any nonnegative
$u\in D(\tilde b_{\mu})$.

(ii) The positivity of the semigroups follows from \cite[Thm. 2.6, ``$4)\Rightarrow 1)$'']{ouhabaz}.
To prove that they map $L^{\infty}(\R^N)$  into itself, it suffices to observe that
the functions
\begin{align*}
V_a=&\theta^2V-\zeta^{-1}a\Delta\zeta,\qquad
W_b=&W-\zeta^{-1}a\sum_{i,j=1}^ND_iq_{ij}D_j\zeta-\zeta^{-1}a\sum_{i,j=1}^Nq_{ij}D_{ij}\zeta,
\end{align*}
are bounded from below by $-C_1$ and their moduli can be controlled by a constant times, respectively, the functions $V$ and $W$.
This and property (i) allow us to show that
\begin{align*}
\tilde a_{\mu,\theta}(u\wedge 1,(u-1)^+)=&\int_{\R^N}V_a(u\wedge 1)(u-1)^+\zeta^2d\mu\\
\ge &-C_1\int_{\R^N}(u\wedge 1)(u-1)^+\zeta^2d\mu,\\[1mm]
\tilde b_{\mu,\theta}(v\wedge 1,(v-1)^+)=&\int_{\R^N}W_b(v\wedge 1)(v-1)^+\zeta^2d\mu\\
\ge &-C_1
\int_{\R^N}(v\wedge 1)(v-1)^+\zeta^2d\mu,
\end{align*}
for any nonnegative $u\in D(\tilde a_{\mu,\theta})$ and $v\in D(\tilde b_{\mu})$.
Applying \cite[Cor. 2.17, ``$3)\Rightarrow 1)$'']{ouhabaz} to the forms
$\tilde a_{\mu,\theta}(\cdot,\cdot)+C_1(\cdot,\cdot)_{\zeta^2\mu}$ and
$\tilde b_{\mu}(\cdot,\cdot)+C_1(\cdot,\cdot)_{\zeta^2\mu}$,
and taking property (i) into account, (ii) follows.

(iii) Since $\tilde b_{\mu}(u,u)\ge \min\{\mu,\theta^{-1}\}\tilde a_{\mu,\theta}(u,u)$ for any $u\in D(\tilde b_{\mu})
\subset D(\tilde a_{\mu,\theta})$, it suffices to
prove the Log-Sobolev inequality for the form $\tilde a_{\mu}$.
For this purpose, we observe that the semigroup
$(e^{t\tilde A_{\mu,\theta}})_{t\ge 0}$ is ultracontractive and it satisfies
\begin{eqnarray*}
\|e^{tA_{\mu,\theta}}\|_{L(L^2_{\zeta^2\mu},L^{\infty}(\R^N))}\le C_4e^{\lambda_{0,\theta}t}e^{ct^{-b}},\qquad\;\,t>0.
\end{eqnarray*}
This property follows from  the kernel estimate
\eqref{kernel-estim-theta}, which shows that the kernel $\tilde k_{\mu,\theta}$ associated to the semigroup $(e^{t\tilde A_{\mu,\theta}})_{t\ge 0}$ satisfies the estimate
\begin{eqnarray*}
\tilde k_{\mu,\theta}(t,x,y)=\zeta^{-1}(x)k_{\mu,\theta}(t,x,y)\zeta^{-1}(y)=\tilde k_{\mu,\theta}(t,x,y)\le C_4e^{\lambda_{0,\theta}t+ct^{-b}},
\end{eqnarray*}
for any $t>0$, any $x,y\in\R^N$ and some positive constant $C_4$, and the fact that $L^2_{\zeta^2\mu}$ is continuously embedded into $L^1_{\zeta^2\mu}$. We can thus apply \cite[Thm. 2.2.3]{davies} (note that its proof works as well also in the case when
the semigroup is not $L^{\infty}$-contractive but its $L^{\infty}$-norm is bounded on bounded sets of $[0,+\infty)$) obtaining \eqref{LSI} with $\tilde a_{\mu,\theta}$ replaced by $\tilde b_{\mu}$, and with $c$ being replaced by a constant $\hat c$.
\end{proof}

\appendix
\section{ }

This appendix contains all technical results that we need to prove Proposition \ref{prop-1} and Theorem \ref{prop-2}.
\begin{proposition}
\label{lemma-1}
Let $q:\R^N\to\R$ be a positive and continuous function such that $q(x)\le C|x|^2$ for any $x\in\R^N$ and some positive
constant $C$. Further, let $W\in C(\R^N)$ be a nonnegative function.
Then, for any $p\in (1,\infty)$, $C^{\infty}_c(\R^N)$ is dense in the space ${\mathcal Z}:=\{u\in W^{2,p}(\R^N):
q^{1/2}|\nabla u|, q|D^2u|, Wu\in L^p(\R^N)\}$, endowed with the norm
\begin{align*}
\|u\|_{\mathcal Z}=\|u\|_{L^p(\R^N)}+\|Wu\|_{L^p(\R^N)}+\|q^{1/2}|\nabla u|\,\|_{L^p(\R^N)}+\|q|D^2u|\,\|_{L^p(\R^N)}.
\end{align*}
\end{proposition}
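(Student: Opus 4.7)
The plan is to carry out the classical two-step truncation--mollification argument, where the key role of the growth assumption $q(x)\le C|x|^2$ appears in controlling the cross-terms produced by the Leibniz rule against weighted derivatives.

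\textbf{Step 1 (Truncation).} Fix $\vartheta\in C^{\infty}_c(\R^N)$ with $\chi_{B_1}\le\vartheta\le\chi_{B_2}$ and, for $n\in\N$, set $\vartheta_n(x)=\vartheta(x/n)$. I plan to show that $u_n:=\vartheta_n u\to u$ in $\mathcal{Z}$. Convergence in $L^p$ and for the $W$-weighted and $(1-\vartheta_n)$-weighted derivative terms follows from dominated convergence, since $u$, $Wu$, $q^{1/2}|\nabla u|$ and $q|D^2u|$ are in $L^p(\R^N)$. The only nontrivial pieces are the cross-terms
\begin{align*}
q^{1/2}u\,\nabla\vartheta_n,\qquad q\,\nabla u\cdot\nabla\vartheta_n,\qquad qu\,D^2\vartheta_n,
\end{align*}
each supported in the annulus $A_n:=\{n\le |x|\le 2n\}$. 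On $A_n$ the growth hypothesis gives $q(x)\le 4Cn^2$ and, crucially, $q(x)^{1/2}\le C^{1/2}|x|\le 2C^{1/2}n$. Combined with $\|\nabla\vartheta_n\|_\infty\lesssim n^{-1}$ and $\|D^2\vartheta_n\|_\infty\lesssim n^{-2}$, this yields the pointwise bounds
\begin{align*}
q^{1/2}|\nabla\vartheta_n|\le C_1,\qquad q^{1/2}|\nabla\vartheta_n|\le C_1,\qquad q|D^2\vartheta_n|\le C_2,
\end{align*}
uniformly in $n$. Rewriting $q|\nabla u|\,|\nabla\vartheta_n|=(q^{1/2}|\nabla u|)(q^{1/2}|\nabla\vartheta_n|)$, each cross-term is dominated by a constant times $|u|\chi_{A_n}$ or $(q^{1/2}|\nabla u|)\chi_{A_n}$, which vanish in $L^p(\R^N)$ as $n\to\infty$ by dominated convergence. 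Thus $u_n\to u$ in $\mathcal{Z}$ and each $u_n$ has compact support.

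\textbf{Step 2 (Mollification).} Given a compactly supported $v\in\mathcal{Z}$ with $\supp v\subset B_R$, let $\rho_\varepsilon$ be a standard mollifier and set $v_\varepsilon:=\rho_\varepsilon\ast v\in\cic(\R^N)$, which for $\varepsilon<1$ is supported in $B_{R+1}$. Since $q$ and $W$ are continuous, both are bounded on the fixed compact set $\overline{B_{R+1}}$ by some constant $M=M(R)$. Hence
\begin{align*}
\|q^{1/2}(\nabla v_\varepsilon-\nabla v)\|_{L^p(\R^N)}&\le M^{1/2}\|\nabla v_\varepsilon-\nabla v\|_{L^p(B_{R+1})},\\
\|q(D^2v_\varepsilon-D^2v)\|_{L^p(\R^N)}&\le M\|D^2v_\varepsilon-D^2v\|_{L^p(B_{R+1})},\\
\|W(v_\varepsilon-v)\|_{L^p(\R^N)}&\le M\|v_\varepsilon-v\|_{L^p(B_{R+1})},
\end{align*}
and the right-hand sides vanish as $\varepsilon\to 0^+$ by the standard $W^{2,p}$-convergence of mollifications. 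Together with $v_\varepsilon\to v$ in $L^p(\R^N)$ and in $W^{2,p}(\R^N)$, this yields $v_\varepsilon\to v$ in $\mathcal{Z}$.

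\textbf{Conclusion.} A diagonal choice of $u_{n_k,\varepsilon_k}\in\cic(\R^N)$ approximates a given $u\in\mathcal{Z}$ in the $\mathcal{Z}$-norm. The only delicate point is the truncation step: the growth bound $q(x)\le C|x|^2$ is exactly what is needed to keep $q^{1/2}|\nabla\vartheta_n|$ and $q|D^2\vartheta_n|$ uniformly bounded on the support of the cutoff derivatives, so that the Leibniz cross-terms can be swept up by a dominated convergence argument; without this quadratic growth restriction the present scheme would fail at precisely that point.
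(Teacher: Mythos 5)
Your proof is correct and follows essentially the same scheme as the paper's: truncation by dilated cutoffs $\vartheta_n$ with the cross-terms controlled through $q^{1/2}|\nabla\vartheta_n|$ and $q|D^2\vartheta_n|$ being uniformly bounded via $q(x)\le C|x|^2$, together with mollification on a fixed compact set where $q$ and $W$ are bounded by continuity. The only (inessential) difference is the order of the two steps — you truncate then mollify, whereas the paper mollifies compactly supported functions first and then reduces to truncation — and you state the cutoff-derivative bounds slightly more explicitly in pointwise form.
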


\begin{proof}
Even if the proof can be obtained employing a standard argument, for the reader's convenience we enter details.
As a first step, take $u\in W^{2,p}(\R^N)$, with compact support, and regularize
it by convolution with standard mollifiers, obtaining a sequence $(u_n)\subset C^{\infty}_c(\R^N)$
converging to $u$ in $W^{2,p}(\R^N)$. Since ${\rm supp}(u_n)\subset {\rm supp}(u)+
B_1$, for any $n\in\N$, $q^{1/2}D_iu_n$, $qD^2_{ij}u_n$ and $Wu_n$ converge, respectively, to
$q^{1/2}D_iu$, $qD^2_{ij}u$ and $Wu$ in $L^p(\R^N)$, as $n$ tends to $\infty$, for any
$i,j=1,\ldots,N$. Hence, $u_n$ tends to $u$
in ${\mathcal Z}$. To conclude the proof, it suffices to show that
any function $u\in {\mathcal Z}$ can be approximated in the ${\mathcal Z}$-norm by
a sequence of compactly supported functions in $W^{2,p}(\R^N)$. For this purpose, to any fixed $u\in {\mathcal Z}$ we
associate the sequence $(u_n)$ defined as follows: $u_n=u\vartheta_n$ for any $n\in\N$,
where $\vartheta_n(x)=\vartheta(n^{-1}x)$ for any $x\in\R^N$ and $\vartheta$ is a smooth function such that
$\chi_{B_1}\le\vartheta\le\chi_{B_2}$.
By dominated convergence, $u_n$ and $Wu_n$ tend to $u$ and $Wu$ in $L^p(\R^N)$, respectively. Similarly, one has
\begin{align*}
\|q^{\frac{1}{2}}|\nabla u_n-\nabla u|\,\|_{L^p(\R^N)}^p
\le & 2^{p-1}\int_{\R^N}q^{\frac{p}{2}}|\nabla u|^p|\vartheta_n-1|^pdx\\[1mm]
&+\frac{2^{p-1}}{n^p}\sup_{n\le |x|\le 2n}q(x)^{\frac{p}{2}}\int_{\R^N}|\nabla\vartheta(n^{-1}\cdot)|^p
|u|^pdx\\[1mm]
\le& 2^{p-1}\int_{\R^N}q^{\frac{p}{2}}|\nabla u|^p|\vartheta_n-1|^pdx\\[1mm]
&+2^{2p-1}C^{\frac{p}{2}}\int_{\R^N}|\nabla\vartheta(n^{-1}\cdot)|^p|u|^pdx.
\end{align*}
Hence, from the dominated convergence theorem, we deduce that
the last side of the previous chain of inequalities vanishes as $n$ tends to $\infty$.

A completely similar computation shows that $qD_{ij}u_n$ tends to $qD_{ij}u$ in $L^p(\R^N)$ as $n\to\infty$, for any
$i,j=1,\ldots,N$. This completes the proof.
\end{proof}

The following interior $L^p$-estimates are crucial to prove that $A_p$ is a sectorial operator in the case when $\alpha\in [0,1)$.

\begin{proposition}
\label{prop-stime-interne}
Fix $p\in (1,\infty)$ and let $L$ be a uniformly second-order elliptic operator in non-divergence form with bounded and continuous coefficients. Further assume that the diffusion coefficients are $\gamma$-H\"older continuous for some $\gamma\in (0,1)$. Then, there exists a positive constant $\omega$ such that, for any $r>0$ and any $\lambda\in\C$ with ${\rm Re}\lambda\ge\omega$,
\begin{equation}
\|\,|D^2u|\,\|_{L^p(B_r)}\le M_r\left (\|\lambda u-Lu\|_{L^p(B_{2r})}+\|u\|_{L^p(B_{2r})}\right ),
\label{stima-interna-hessiano}
\end{equation}
for any $u\in W^{2,p}(\R^N)$ and  some positive constant $M_r$,
which depends on $r$, the sup-norm of the coefficients of the operator $L$, the ellipticity constant, the modulus of continuity of the diffusion coefficients of $L$, but it is independent of $u$ and $\lambda$.
\end{proposition}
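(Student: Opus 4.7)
The plan rests on combining the sectoriality of $L$ on $L^p(\R^N)$ with a localization argument via cutoffs. First I would recall that, under the stated regularity of the coefficients, the classical theory (Stewart's theorem, or Lunardi, \emph{Analytic semigroups and optimal regularity in parabolic problems}, Chapter~3) ensures that $L$ with domain $W^{2,p}(\R^N)$ is sectorial in $L^p(\R^N)$. Consequently, there exist $\omega,\,M>0$, depending only on the structural quantities listed in the statement, such that
\[
\|v\|_{W^{2,p}(\R^N)}\le M\|(\lambda-L)v\|_{L^p(\R^N)}, \qquad v\in W^{2,p}(\R^N),\ \mathrm{Re}\,\lambda\ge\omega.
\]
A standard moment interpolation then yields also $|\lambda|^{1/2}\|\nabla v\|_{L^p(\R^N)}\le M'\|(\lambda-L)v\|_{L^p(\R^N)}$.

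Next I would localize via a cutoff $\varphi$ with $\chi_{B_r}\le\varphi\le\chi_{B_{2r}}$ satisfying the standard bounds $|\nabla\varphi|\le C/r$, $|D^2\varphi|\le C/r^2$. Applying the global estimate to $\varphi u\in W^{2,p}(\R^N)$ and using the commutator identity
\[
(\lambda-L)(\varphi u)=\varphi(\lambda-L)u-2\sum_{i,j}a_{ij}(D_i\varphi)(D_ju)-\sum_{i,j}a_{ij}(D_{ij}\varphi)u-\sum_i b_i(D_i\varphi)u,
\]
together with $\varphi\equiv 1$ on $B_r$, I would obtain
\[
\|D^2u\|_{L^p(B_r)}\le M\|(\lambda-L)u\|_{L^p(B_{2r})}+\tilde C_r\bigl(\|\nabla u\|_{L^p(B_{2r})}+\|u\|_{L^p(B_{2r})}\bigr).
\]

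The main obstacle is removing the first-order term on the right. A single application of the Gagliardo-Nirenberg inequality on $B_{2r}$ would introduce $\|D^2u\|_{L^p(B_{2r})}$, a quantity strictly larger than what we wish to bound, so the closure cannot be done in one shot. I would resolve this by repeating the above localization on a nested family of balls $B_{r_0}\subset B_{r_1}\subset\cdots\subset B_{r_K}$ with $r_0=r$, $r_K=2r$ and $r_{k+1}-r_k=r/K$, and combining each step with the interpolation
\[
\|\nabla u\|_{L^p(B_{r_{k+1}})}\le\varepsilon\|D^2u\|_{L^p(B_{r_{k+1}})}+C_\varepsilon\|u\|_{L^p(B_{2r})},
\]
choosing $\varepsilon$ at each stage so that the coefficient of $\|D^2u\|_{L^p(B_{r_{k+1}})}$ equals $1/2$. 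This yields the telescoping recursion $\|D^2u\|_{L^p(B_{r_k})}\le\alpha+\tfrac{1}{2}\|D^2u\|_{L^p(B_{r_{k+1}})}$ with $\alpha$ depending on $r$, $K$ and on $\|(\lambda-L)u\|_{L^p(B_{2r})}+\|u\|_{L^p(B_{2r})}$ but not on $\lambda$. Summing the geometric series and using the moment interpolation $|\lambda|^{1/2}\|\nabla v\|_{L^p}\le M'\|(\lambda-L)v\|_{L^p}$ to control the residual $2^{-K}\|D^2u\|_{L^p(B_{2r})}$ tail via a final bootstrap closes the inequality with $M_r$ depending only on $r$ and on the structural constants of $L$, hence independent of $\lambda$ and $u$. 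The delicate final absorption is where the main care is required; everything else is a routine cutoff computation.
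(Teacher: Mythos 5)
Your overall strategy — invoke the global sectoriality estimate for $L$ on $L^p(\R^N)$, localize with cutoffs, and iterate to absorb the first-order commutator term — is the same as the paper's, and the first two stages (Stewart/Lunardi sectoriality, the cutoff computation producing the commutator terms) are sound. The gap is in the iteration.

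With an \emph{arithmetically} spaced family $r_k=r+kr/K$ the cutoff $\varphi_k$ with $\chi_{B_{r_k}}\le\varphi_k\le\chi_{B_{r_{k+1}}}$ has $|\nabla\varphi_k|\lesssim K/r$ and $|D^2\varphi_k|\lesssim K^2/r^2$, so the constant $\alpha$ in your recursion $\|D^2u\|_{L^p(B_{r_k})}\le\alpha+\tfrac12\|D^2u\|_{L^p(B_{r_{k+1}})}$ is $\alpha=\alpha(K)\sim K^2/r^2$, uniformly in $k$. Summing the geometric series gives
\[
\|D^2u\|_{L^p(B_r)}\le 2\alpha(K)+2^{-K}\|D^2u\|_{L^p(B_{2r})},
\]
and now you are stuck: letting $K\to\infty$ kills the tail but makes $\alpha(K)\to\infty$, while fixing $K$ leaves a residual $2^{-K}\|D^2u\|_{L^p(B_{2r})}$ that is \emph{not} bounded by $\|\lambda u-Lu\|_{L^p(B_{2r})}+\|u\|_{L^p(B_{2r})}$ — to bound it that way is precisely the proposition you are trying to prove, on a slightly larger ball. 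The moment inequality $|\lambda|^{1/2}\|\nabla v\|_{L^p}\le M'\|(\lambda-L)v\|_{L^p}$ controls a \emph{first}-order quantity globally, not the second-order residual localized to $B_{2r}$; the ``final bootstrap'' you gesture at is not defined and does not close the gap.

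The paper resolves this by taking a \emph{geometric} sequence $r_n=2r(1-2^{-n-1})$, so that $r_{n+1}-r_n\sim r\,2^{-n}$, the cutoff constants grow like $|\nabla\varphi_n|\lesssim 2^n/r$, $|D^2\varphi_n|\lesssim 4^n/r^2$, and after absorbing the gradient term by interpolation with $\varepsilon\sim 2^{-n}$ one gets
\[
\|\,|D^2v_n|\,\|_{L^p(\R^N)}\le \tfrac{1}{16}\|\,|D^2v_{n+1}|\,\|_{L^p(\R^N)}+C_r'\bigl(\|\lambda u-Lu\|_{L^p(B_{2r})}+4^n\|u\|_{L^p(B_{2r})}\bigr),
\]
where $v_n=\varphi_n u$. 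Multiplying by $16^{-n}$ and summing over $n=0,\dots,M$ gives a telescoping series whose coefficients $16^{-n}$ and $16^{-n}4^n=4^{-n}$ are summable, and the residual $16^{-(M+1)}\|\,|D^2v_{M+1}|\,\|\lesssim 4^{-(M+1)}\|u\|_{W^{2,p}(\R^N)}\to 0$. This is exactly the device (the standard iteration/hole-filling lemma) that your arithmetic spacing cannot reproduce. A further small but useful trick in the paper: since $\varphi_{n+1}\equiv 1$ on the support of $\nabla\varphi_n$, one has $\nabla u=\nabla v_{n+1}$ there, which lets one replace $\|\nabla u\|_{L^p(B_{2r})}$ by $\|\,|\nabla v_{n+1}|\,\|_{L^p(\R^N)}$ and apply the \emph{global} Gagliardo–Nirenberg inequality \eqref{interp} on $\R^N$ rather than an interpolation inequality on a ball, keeping all constants uniform in $n$.

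To repair your proof, replace the arithmetic family $r_k=r+kr/K$ with the geometric one and mimic the telescoping bookkeeping above (or invoke directly the standard iteration lemma ``$f(s)\le\theta f(t)+A(t-s)^{-\gamma}+B$ for all $\rho_0\le s<t\le\rho_1$ implies $f(\rho_0)\le C(\theta,\gamma)\bigl(A(\rho_1-\rho_0)^{-\gamma}+B\bigr)$'', whose proof is itself the geometric iteration). Drop the appeal to the moment inequality for $\nabla v$; it is not needed and does not perform the absorption you want.
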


\begin{proof}
It is well-known that there exist two positive constants $\omega$ and $C$ such that
\begin{equation}
\|\,|D^2v|\,\|_{L^p(\R^N)}\le C\|\lambda v-Lv\|_{L^p(\R^N)},
\label{stima-globale-hessiano}
\end{equation}
for any $v\in W^{2,p}(\R^N)$ and any $\lambda\in\C$ with ${\rm Re}\lambda\ge\omega$ (see e.g., \cite[Thm. 3.1.3]{lunardi}).

Let us fix $r>0$ and, for any $n\in\N$, set $r_n=2r(1-2^{-n-1})$. Clearly, $r_0=r$ and $\lim_{n\to\infty}r_n=2r$.
Let $(\varphi_n)$ be a sequence of smooth functions such that $\chi_{B_{r_n}}\le\varphi_n\le\chi_{B_{r_{n+1}}}$, $|\nabla\varphi_n|\le 2^ncr^{-1}$,
$|D^2\varphi_n|\le 4^ncr^{-2}$ in $\R^N$, for any $n\in\N$ and some positive constant $c$.
Fix $u\in W^{2,p}(\R^N)$ and apply estimate \eqref{stima-globale-hessiano} to the function $v_n=u\varphi_n$. We get
\begin{equation}
\|\,|D^2v_n|\,\|_{L^p(\R^N)}
\le C_r\left (\|\lambda u-Lu\|_{L^p(B_{2r})}+4^n\|u\|_{L^p(B_{2r})}
+2^n\|\,|\nabla v_{n+1}|\,\|_{L^p(\R^N)}\right ).
\label{stima-interna-1}
\end{equation}
The constant appearing in \eqref{stima-interna-1}, as well as in all the forthcoming estimates, are
all independent of $\lambda$, $u$ and $n$.
Using the interpolation inequality \eqref{interp}, and recalling that $\|v_{n+1}\|_{L^p(\R^N)}\le \|u\|_{L^p(B_{2r})}$, we can estimate
\begin{eqnarray*}
\|\,|\nabla v_{n+1}|\,\|_{L^p(\R^N)}\le c_N\varepsilon\|\,|D^2v_{n+1}|\,\|_{L^p(\R^N)}+\frac{c_N}{4\varepsilon}\|u\|_{L^p(B_{2r})},
\end{eqnarray*}
for any $\varepsilon>0$. Plugging this inequality into \eqref{stima-interna-1} and choosing
$\varepsilon=2^{-n-4}(C_rc_N)^{-1}$, we get
\begin{eqnarray*}
\|\,|D^2v_n|\,\|_{L^p(\R^N)}\le\frac{1}{16}\|\,|D^2v_{n+1}|\,\|_{L^p(\R^N)}
+C_r'\left (\|\lambda u-Lu\|_{L^p(B_{2r})}+4^n\|u\|_{L^p(B_{2r})}\right ).
\end{eqnarray*}
Let us multiply both the sides of the previous inequality by $16^{-n}$ and then sum over $n=0,\ldots,M$.
We obtain
\begin{align*}
&\|\,|D^2v_0|\,\|_{L^p(\R^N)}-\frac{1}{16^{M+1}}\|\,|D^2v_{M+1}|\,\|_{L^p(\R^N)}\\
\le & C_r''\|\lambda u-Lu\|_{L^p(B_{2r})}+C_r''\|u\|_{L^p(B_{2r})}.
\end{align*}
The assumptions on $\varphi_n$ imply that
$\|\,|D^2v_{M+1}|\,\|_{L^p(\R^N)}\le C4^{M+1}\|u\|_{W^{2,p}(\R^N)}$ for some positive constant $C$, independent of $M$.
Hence, we obtain \eqref{stima-interna-hessiano} letting $M\to\infty$, recalling that $v_0=u$ in $B_r$.
\end{proof}

\end{document}